\newtheorem{Theorem}{\bf \large Theorem}[section]
\newtheorem{PROPOSITION}[Theorem]{\bf \large Proposition}
\newtheorem{Definition}[Theorem]{\bf \large Definition}
\newtheorem{Remark}[Theorem]{\bf \large Remark}
\newtheorem{ex}[Theorem]{\bf \large Example}
\newtheorem{lemma}[Theorem]{\bf\large Lemma}
    \newcommand{\Rmnum}[1]{\expandafter\@slowromancap\romannumeral #1@}
\title{\textbf{Deformation of Hypersurfaces Preserving the M\"{o}bius Metric and a Reduction Theorem}}
\author {\normalsize {Tongzhu Li$^1$~~~Xiang Ma$^2$~~~Changping Wang$^3$}\\
\small{$^1$Department of Mathematics, Beijing Institute of
Technology, Beijing, China.}\\
\small{$^2$LMAM, School of Mathematical sciences, Peking
University, Beijing, China.}\\
\small{$^3$College of mathematics and computer Science, Fujian Normal University, Fuzhou, China}\\
\small{E-mail: $^1$litz@bit.edu.cn; $^2$maxiang@math.pku.edu.cn;  $^3$cpwang@fjnu.edu.cn.}}
\date{}
\begin{document}
\maketitle\footnotetext [1]{T. Z. Li and C. P. Wang is supported by the grant No.
11171004 of NSFC; } \footnotetext [2]{X. Ma is partially supported
by the grant No. 11171004 and No. 10901006 of NSFC. }
\begin{abstract}
A hypersurface without umbilics in the $(n+1)$-dimensional
Euclidean space $f: M^n\rightarrow R^{n+1}$ is known to be
determined by the M\"{o}bius metric $g$ and the M\"{o}bius second
fundamental form $B$ up to a M\"{o}bius transformation when $n\geq 3$.
In this paper we consider M\"{o}bius rigidity for hypersurfaces and deformations of a hypersurface
preserving the M\"obius metric in the high dimensional case $n\geq 4$.
When the highest multiplicity of principal curvatures is less than
$n-2$, the hypersurface is M\"obius rigid. When the multiplicities of all principal curvatures are constant, deformable hypersurfaces and the possible deformations are also classified completely.
In addition, we establish a Reduction Theorem characterizing the classical construction of cylinders, cones, and rotational hypersurfaces,
which helps to find all the non-trivial deformable examples in our classification with wider application in the future.
\end{abstract}
\medskip\noindent
{\bf 2000 Mathematics Subject Classification:} 53A30, 53A55;
\par\noindent {\bf Key words:}
M\"{o}bius metric, rigidity theorem, deformation of submanifolds,
Bonnet surfaces, Cartan hypersurfaces, reduction theorem.

\section{Introduction}
In submanifold theory a fundamental problem is to investigate which
data are sufficient to determine a submanifold $M$ up to the action of a
certain transformation group $G$ on the ambient space.
The \emph{deformable case} means that there exists non-congruent
(depending on $G$) immersions
with the same given invariants at corresponding points,
and such different immersions are called \emph{deformations} to each other.
In contrast,
the \emph{rigid case} indicates that such deformations do not exist
(or just be congruent by the action of $G$).
In this paper we consider deformations of hypersurfaces $M^n$
preserving the so-called M\"obius metric in the framework of M\"obius geometry
($G$ is the M\"obius transformation group acting on $R^{n+1}\cup\{\infty\}$).

As a background let us review some classical results. It is known that a generic
immersed surface in Euclidean three-space $u:M^2\rightarrow R^3$ is
determined, up to a rigid motion of $R^3$, by its induced metric $I$ and
mean curvature function $H$. All exceptional immersions are called
\emph{Bonnet surfaces}, which were classified by
Bonnet\cite{B}, Cartan\cite{c4} and Chern\cite{ch} into three
distinct classes:

1) CMC (constant mean curvature) surfaces with a $1$-parameter deformations preserving $I$ and $H$ (known as \emph{the associated family});

2) Not CMC and admits a continuous $1$-parameter deformations;

3) Surfaces that admit exactly one such deformation.

In either case, two Bonnet surfaces forming deformation to each other is called a \emph{Bonnet pair}. These notions are directly generalized to other space forms
$S^3$ and $H^3$. See \cite{bobenko, kamberov,lawson, springborn} for recent works on this topic.

For a  hypersurface $f:M^n\rightarrow R^{n+1}(n\geq 3)$,
the well-known Beez-Killing rigidity theorem says that
$f$ is isometrically rigid if the rank of its second fundamental form
(i.e. the number of non-zero principal curvatures)
is greater than or equal to $3$ everywhere. Compared to surface case
this is a stronger rigidity result, mainly due to the Gauss equations
which forms an over-determined system when there are many non-zero principal curvatures.

On the other hand, all isometrically deformable hypersurfaces have rank $2$ or less.
They are locally classified by Sbrana \cite{sb} and Cartan \cite{c}.
According to their results there are four classes of them.
The first two classes (surface-like and ruled) are highly deformable.
The third class admits precisely a continuous $1$-parameter family
of deformations, and the fourth class has a unique deformation.

In M\"{o}bius geometry, let $f,\bar{f}: M^n\rightarrow
R^{n+1}$ be two hypersurfaces in the $(n+1)$-dimensional Euclidean
space $R^{n+1}$. We say $f$ is M\"{o}bius equivalent to
$\bar{f}$ (or $f$ is M\"{o}bius congruent to $\bar{f}$) if there
exists a M\"{o}bius transformation $\Psi$ such that $f=\Psi\circ\bar{f}$.
It is natural to consider deformations preserving certain conformal
invariants. In \cite{c1} Cartan considered the problem of \emph{conformal deformation},
i.e. deformation of any given hypersurface preserving the conformal class
of the induced metric.
Cartan has given the following conformal rigidity result:
\begin{Theorem}\cite{c1}
A hypersurface $f:M^n\rightarrow R^{n+1}~~(n\geq 5)$ is conformally
rigid if each principal curvature has multiplicity less than $n-2$
everywhere.
\end{Theorem}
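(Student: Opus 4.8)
The plan is to compare the two hypersurfaces through the conformal invariants of their induced metrics, exploiting that for $n\ge 4$ the Weyl tensor $W$ is a pointwise conformal invariant. Let $f,\bar f:M^n\to R^{n+1}$ be the original hypersurface and a conformal deformation of it, so the induced metrics are related by $\bar I=e^{2\varphi}I$; write $S,\bar S$ for the shape operators (self-adjoint with respect to $I$, resp.\ $\bar I$), $k_i,\bar k_i$ for the principal curvatures, and $\mathring S=S-\tfrac1n(\operatorname{tr}S)\,\mathrm{id}$, $\mathring k_i=k_i-\tfrac1n\sum_mk_m$ for their trace-free parts. Since the ambient space is flat, the Gauss equation reads $R^I_{ijkl}=h_{ik}h_{jl}-h_{il}h_{jk}$, and taking its totally trace-free part one finds that $W$ depends only on $\mathring S$: in an $I$-orthonormal frame diagonalizing $S$,
$$
W_{ijij}=\mathring k_i\mathring k_j+\frac{\mathring k_i^2+\mathring k_j^2}{n-2}-\frac{\sum_m\mathring k_m^2}{(n-1)(n-2)}\qquad(i\ne j),
$$
and $W_{ijkl}=0$ whenever $\{i,j\}\ne\{k,l\}$. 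The same identity holds for $\bar f$ in a $\bar I$-orthonormal frame diagonalizing $\bar S$, and conformal invariance of $W$ forces the two resulting algebraic curvature tensors to agree up to the factor $e^{2\varphi}$.

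The decisive step — and, I expect, the technical heart of the argument — is an algebraic lemma: if each principal curvature of $f$ has multiplicity at most $n-3$, then the eigendistributions of $\bar S$ coincide with those of $S$ and $\mathring{\bar S}=\epsilon e^{-\varphi}\mathring S$ for a constant sign $\epsilon=\pm1$. The displayed formula presents $W$ as a ``diagonal'' algebraic curvature tensor, and the point is that its invariant decomposition of $TM$ into blocks — on which the components $W_{ijij}$ are interrelated in the prescribed way — is \emph{unique} precisely under the multiplicity bound: a principal curvature of multiplicity $n-2$ or more produces a ``round'' block so large that the tensor no longer remembers it, and then the decomposition can genuinely fail to be unique (when $f$ has only two distinct principal curvatures the bound forces both multiplicities to be at least $3$, hence $n\ge 6$, while for $n=5$ it forces at least three distinct values). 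Once the block decompositions are matched, the off-diagonal components together with the trace relation $\sum_m\mathring k_m=0$ reconstruct the normalized curvatures of $\bar f$ as a scalar multiple of those of $f$, and comparison of magnitudes via the conformal factor pins this scalar to $\epsilon e^{-\varphi}$; the zero eigenvalue of $\mathring S$, if present, occupies one block which must correspond to the zero eigenvalue of $\mathring{\bar S}$. Replacing the unit normal of $\bar f$ by its opposite when $\epsilon=-1$, we may assume $\mathring{\bar S}=e^{-\varphi}\mathring S$.

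To conclude, one reads off the M\"obius invariants of $\bar f$ by a direct computation. From $\mathring{\bar S}=e^{-\varphi}\mathring S$ one gets $\bar\rho^2=\tfrac{n}{n-1}\operatorname{tr}(\mathring{\bar S}^2)=e^{-2\varphi}\rho^2$, hence the M\"obius metric of $\bar f$ is $\bar g=\bar\rho^2\bar I=e^{-2\varphi}\rho^2\cdot e^{2\varphi}I=\rho^2I=g$, and the M\"obius second fundamental form is $\bar B=\bar\rho\,\mathring{\bar h}=e^{-\varphi}\rho\cdot e^{\varphi}\mathring h=\rho\,\mathring h=B$. Since the multiplicity hypothesis already excludes umbilic points, both $f$ and $\bar f$ are umbilic-free hypersurfaces in $R^{n+1}$ ($n\ge 4\ge 3$) with the same M\"obius metric and the same M\"obius second fundamental form; by the fundamental theorem for hypersurfaces in M\"obius geometry recalled above, $\bar f$ is M\"obius congruent to $f$. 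Thus $f$ admits no non-trivial conformal deformation, which is the asserted rigidity; note that the proof in fact shows that under the multiplicity bound a conformal deformation automatically preserves the whole M\"obius geometry, so the only place a substantial idea is needed beyond bookkeeping is the algebraic lemma of the middle step, where the condition ``multiplicity $<n-2$'' is exactly the threshold under which the Weyl tensor of a hypersurface remembers its second fundamental form.
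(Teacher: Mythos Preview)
The paper does not supply its own proof of this theorem; it is quoted from Cartan as background. Your overall strategy is nonetheless the right one and runs parallel to what the paper does for its M\"obius rigidity result (Theorem~\ref{main1}, proved in Sections~6--7): reduce the question to a pointwise algebraic statement about how much of the trace-free shape operator is encoded in the Weyl tensor, and then invoke the fundamental theorem (Theorem~\ref{fundthe}).

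The genuine gap is that you assert the algebraic lemma but do not prove it. The heuristic about an ``invariant block decomposition'' being unique under the multiplicity bound is not an argument; one must actually show that two trace-free symmetric operators $T,\tilde T$ with $W^T=W^{\tilde T}$ are simultaneously diagonalizable and then that $\tilde\lambda_j=\pm\lambda_j$. The paper's Theorem~\ref{th1} (via Lemmas~\ref{lem1} and~\ref{lem2}) carries out precisely this simultaneous diagonalization in the M\"obius setting, where the hypothesis is $S^B=S^{\bar B}$. The proof works by induction, analysing the components $S_{ijik}$ and $S_{ikjl}$ with distinct indices; since your hypothesis $W^T=W^{\tilde T}$ differs from $S^T=S^{\tilde T}$ only by a multiple of $\delta_{ik}\delta_{jl}-\delta_{il}\delta_{jk}$, which vanishes on all such off-diagonal components, Lemmas~\ref{lem1}--\ref{lem2} transfer verbatim. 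After diagonalization, the argument of Proposition~\ref{prop62} (taking differences so the $|T|^2$ terms cancel) yields $\tilde\lambda_j=b\lambda_j+c$; tracelessness forces $c=0$, and then $W^{\tilde T}=b^2W^T$ together with $W^T\ne 0$ (the multiplicity bound rules out conformal flatness) forces $b=\pm 1$. Your final paragraph, deducing $\bar g=g$ and $\bar B=B$ from $\mathring{\bar S}=\pm e^{-\varphi}\mathring S$, is correct. So the outline is sound, but the step you flag as ``the technical heart'' is exactly where all the work lies, and as written you have only named it.
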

In \cite{m} do Carmo and Dajczer generalized Cartan's rigidity theorem to
submanifolds of dimension $n\geq 5$. Note that the multiplicity of a principal curvature is M\"obius invariant.
When the highest multiplicity is $n$ or $n-1$, it is the
conformally flat case well-known to be highly deformable.
When $n\geq 5$ and the highest multiplicity is $n-2$,
Cartan \cite{c1} gave a quite similar classification
of conformally deformable hypersurfaces into four cases:

I) Surface-like hypersurfaces (which are cylinders, cones or revolution hypersurfaces over surfaces in 3-dim space forms);

II) Conformally ruled hypersurfaces;

III) One of those having a continuous 1-parameter family of deformations;

IV) One of those that admits a unique deformation.

In \cite{d1} and \cite{d2} Dajczer et.al.
gave a modern account of Sbrana and Cartan's classification. Following Dajczer,
we call such conformally deformable hypersurfaces as \emph{Cartan hypersurfaces}
of class I, II, III, and IV.

We observe that in the conformal class of a given immersed hypersurface in $R^{n+1}$
there is a distinguished metric called the \emph{M\"obius metric $g$}.
Together with the \emph{M\"{o}bius second fundamental form $B$} they
form a complete system of invariants in M\"obius geometry (see \cite{w} or Theorem~\ref{fundthe} in this paper).
Based on our experience, the deformation preserving the M\"{o}bius metric $g$ seems to be a natural and new topic.

\begin{Definition}
A hypersurface $f:M^n\rightarrow R^{n+1}$
is said to be \emph{M\"{o}bius rigid} if any other immersion
$\bar{f}:M^n\rightarrow R^{n+1}$ sharing the same M\"{o}bius metric $g$ as $f$, is M\"{o}bius equivalent to $f$.
An immersion $\bar{f}:M^n\rightarrow R^{n+1}$
is said to be a \emph{M\"{o}bius deformation} of $f$
if they induce the same M\"{o}bius metric $g$ at corresponding points
and $\bar{f}(M)$ is not congruent to $f(M)$ up to any M\"obius transformation.
\end{Definition}
We obtain the following M\"obius Rigidity Theorem.
\begin{Theorem}\label{main1}
Let $f: M^n\rightarrow R^{n+1}~~(n\geq 4)$ be a hypersurface in the
$(n+1)$-dimensional Euclidean space.
If every principal curvature of $f$ has multiplicity less
than $n-2$ everywhere, then $f$ is M\"{o}bius rigid.
\end{Theorem}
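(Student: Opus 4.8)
The plan is to invoke the fundamental theorem of M\"obius hypersurface geometry (Theorem~\ref{fundthe}): since the pair $(g,B)$ determines a hypersurface without umbilics up to a M\"obius transformation, it suffices to prove that any competing immersion $\bar f\colon M^n\to R^{n+1}$ inducing the same M\"obius metric $g$ has M\"obius second fundamental form $\bar B=\pm B$ everywhere. The sign is harmless: reversing the unit normal of $\bar f$ turns $\bar B$ into $-\bar B$ without altering $\bar f$ as a map, so in either case $\bar f$ is M\"obius congruent to $f$; and since $|B|^{2}\equiv|\bar B|^{2}\equiv\frac{n-1}{n}>0$, both tensors are nowhere zero, whence once $\bar B(p)\in\{\pm B(p)\}$ holds pointwise the sign is locally, hence globally, constant on the connected manifold $M$. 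Finally recall that the eigenvalues of $B$ are the quantities $\lambda_i-H$ formed from the Euclidean principal curvatures $\lambda_i$ and the mean curvature $H$, multiplied by one common positive factor; thus the eigenspaces of $B$ coincide with the principal distributions of $f$, and the hypothesis reads: every eigenspace of $B$ has dimension at most $n-3$.

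The first step is to extract the full content of the M\"obius Gauss equation (cf. \cite{w})
$$R_{ijkl}=B_{ik}B_{jl}-B_{il}B_{jk}+A_{ik}g_{jl}+A_{jl}g_{ik}-A_{il}g_{jk}-A_{jk}g_{il},$$
where $A$ is the Blaschke tensor, together with the normalizations $\operatorname{tr}B=0$, $|B|^{2}=\frac{n-1}{n}$ and $\operatorname{tr}A=\tfrac{1}{2n}+\tfrac{R}{2(n-1)}$ (contractions and scalar curvature $R$ taken with respect to $g$). Contracting the Gauss equation once gives $(n-2)A_{ik}=R_{ik}+(B^{2})_{ik}-(\operatorname{tr}A)g_{ik}$ with $(B^{2})_{ik}=g^{jl}B_{ij}B_{lk}$; since $n\ge4$ we divide by $n-2$ and express $A$ purely through the curvature of $g$ and through $B^{2}$. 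Substituting this back collects all terms free of $B$ into a fixed $(0,4)$-tensor $\mathcal W$ built only from $g$, leaving the pointwise algebraic identity
$$\mathcal W_{ijkl}=B_{ik}B_{jl}-B_{il}B_{jk}+\tfrac{1}{n-2}\bigl((B^{2})_{ik}g_{jl}+(B^{2})_{jl}g_{ik}-(B^{2})_{il}g_{jk}-(B^{2})_{jk}g_{il}\bigr),$$
which $\bar B$ must satisfy with the same $\mathcal W$, since $\bar f$ carries the same $g$. Diagonalizing $B$ at a point in a $g$-orthonormal frame $e_{1},\dots,e_{n}$ with $Be_{i}=b_{i}e_{i}$, the off-diagonal components of this identity become conditions purely on $\bar B$ which — as in the Beez--Killing and do Carmo--Dajczer arguments, since each eigenspace of $B$ has a complement of dimension at least $3$ — force $\bar B$ to be block-diagonal with respect to the eigenspaces of $B$. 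The diagonal components then compare $b_{i}b_{j}+\tfrac{b_{i}^{2}+b_{j}^{2}}{n-2}$ with the corresponding expression in the eigenvalues of $\bar B$; together with $\operatorname{tr}\bar B=0$, $|\bar B|^{2}=\frac{n-1}{n}$ and the multiplicity bound this pins $\bar B$, at each point, down to a finite set of candidates. Note that in the classical isometric rigidity the analogous step (with the term $\tfrac{1}{n-2}B^{2}$ absent) yields only $\bar B=\pm B$; here the extra term allows a few further candidates, obtained from $\pm B$ by redistributing eigenvalues among the $e_{i}$ along certain symmetric patterns — for instance, in dimension $n=4$, along a Klein four-group.

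The second, decisive step is to eliminate these exceptional candidates using the M\"obius Codazzi equations
$$B_{ij,k}-B_{ik,j}=g_{ij}C_{k}-g_{ik}C_{j},\qquad A_{ij,k}-A_{ik,j}=B_{ik}C_{j}-B_{ij}C_{k},$$
and the Ricci equation $C_{i,j}-C_{j,i}=\sum_{k}(B_{ik}A_{kj}-B_{jk}A_{ki})$, the crucial point being that the covariant derivatives here are those of $g$ and hence are \emph{the same} for $f$ and $\bar f$. Inserting an exceptional candidate $\bar B$ — which is $\pm B$ with eigenvalues reshuffled, while the connection of $g$ is untouched — into the first Codazzi equation produces an overdetermined first-order system; prolonging it and invoking the multiplicity bound, one shows that no genuine reshuffling is compatible with it, so $\bar B=\pm B$ at every point, hence globally, and Theorem~\ref{fundthe} concludes the proof. (For $n\ge5$ one may bypass this analysis: the M\"obius metric being preserved forces the conformal class of the induced metric to be preserved as well, so Cartan's conformal rigidity theorem \cite{c1} applies directly; the genuinely new situation is $n=4$, where conformal rigidity may fail and the finer information carried by $g$ — a fixed Levi-Civita connection shared by $f$ and $\bar f$ — is indispensable.) I expect the main obstacle to be precisely this second step: carrying out the prolongation of the Codazzi--Ricci system carefully enough to obstruct \emph{every} exceptional pointwise solution of the Gauss equation, which is exactly where the hypothesis $n\ge4$ (rather than $n\ge5$) is what one needs, and where the term $\tfrac{1}{n-2}B^{2}$, absent from the classical isometric picture, makes the analysis genuinely new.
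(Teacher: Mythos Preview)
Your approach is essentially the paper's: it first proves an algebraic lemma (Theorem~\ref{th1}) showing that if the Weyl-type tensor $S$ built from $B$ agrees with that built from $\bar B$, then either $B,\bar B$ are simultaneously diagonalizable or one of them has an eigenvalue of multiplicity $\ge n-2$ --- so under the hypothesis they diagonalize together --- and then for $n\ge5$ a short pointwise argument on the eigenvalue equations gives $\bar B=\pm B$ (Proposition~\ref{prop62}), while for $n=4$ the Klein-four reshufflings you anticipate are eliminated by precisely the Codazzi computation you outline (Proposition~\ref{prop63}), ending in the curvature contradiction $(\lambda_1-\lambda_2)(\lambda_3-\lambda_4)=0$. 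The paper also notes, as you do, that $n\ge5$ already follows from Cartan's conformal rigidity; the genuinely new content is the $n=4$ elimination, which you correctly flag as the main obstacle.
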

\begin{Remark}\label{rem1}
Compared with Cartan's notion before, a conformally rigid
hypersurface $f: M^n\rightarrow R^{n+1}$ is M\"obius rigid,
but the converse may not be true. On the other hand, if $f$ is
M\"obius deformable with deformation $\bar{f}$, they are also
conformal deformations to each other, but the converse may not be true. Thus when $n\ge 5$ our rigidity theorem is a corollary of Cartan's conformal rigidity result.

On the other hand, Cartan treated the special dimensions
$n=4,3$ in \cite{c2, c3}. In particular, in \cite{c2}
Cartan has shown that, for $n=4$,
there exist hypersurfaces $f,\bar{f}:M^4\rightarrow R^5$ that have
four distinct principal curvatures at each point $p\in M^4$
and are conformal deformations to each other.
In contrast, our M\"obius rigidity result as above
still holds true for dimension $n=4$. Because of this
interesting difference and for the purpose of self-containedness
we give a proof to Theorem~\ref{main1} in Section~7.
\end{Remark}
The main result of this paper is the following
classification theorem of all M\"obius deformable hypersurfaces.
\begin{Theorem}\label{main2}
Let $f: M^n\rightarrow R^{n+1}~~(n\geq 4)$ be an umbilic free hypersurface in the
$(n+1)$-dimensional Euclidean space, whose principal curvatures have constant multiplicities. Suppose $f$ is M\"{o}bius deformable.

$1)$ When one principal curvature of $f$ has multiplicity $n-1$
everywhere, this deformable $f$ must have constant M\"{o}bius sectional curvature.
They are either cones, cylinders or rotational hypersurfaces
over the so-called \emph{curvature-spirals} in 2-dimensional space-forms.
(See \cite{guo} for the classification or Section~4 for an independent proof.)

$2)$ When one principal curvature of $f$ has multiplicity $n-2$
everywhere, locally $f$ is M\"obius equivalent to
either of the three classes below:

(a) $f(M^n)\subset L^2\times R^{n-2}$, where $L^2$ is a Bonnet
surface in $R^3$;

(b) $f(M^n)\subset CL^2\times R^{n-3}$, where $CL^2\subset R^4$ is a
cone over $L^2\subset S^3$, and $L^2$ is a Bonnet surface in $S^3$;

(c) $f(M^n)$ is a rotational hypersurface over $L^2\subset R^3_+$,
where $L^2$ is a Bonnet surface in the hyperbolic half space model $R^3_+$.

Moreover, the M\"obius deformation to any of them belongs to the same class and comes from the deformation of the corresponding Bonnet surface $L^2$.
\end{Theorem}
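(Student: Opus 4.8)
The plan is to set up the Möbius structure equations for a hypersurface $f$ whose principal curvatures have constant multiplicities, with the highest multiplicity equal to $n-1$ (case 1) or $n-2$ (case 2), and then to analyze the integrability conditions forced by the existence of a Möbius deformation $\bar f$. Recall that the pair $(g,B)$ is a complete Möbius invariant (Theorem~\ref{fundthe}), so a deformation $\bar f$ shares $g$ but must have $\bar B\neq B$; on the other hand $\bar B$ is symmetric, trace-free with respect to $g$, and must satisfy the same Möbius Gauss, Codazzi and Ricci equations as $B$, with the same Blaschke tensor (Schouten-type tensor) and Möbius form. First I would diagonalize $B$ simultaneously along the curvature distributions and write $B=\sum_i b_i\,\mathrm{d}x_i^2$ in adapted frames; the trace-free condition and the prescribed $\|B\|^2$ (which is determined by $g$ through the Möbius Gauss equation, since the Möbius scalar curvature is an intrinsic quantity of $g$) already constrain $\bar B$ strongly. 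The key linear-algebra step is that the difference $B-\bar B$ is a Codazzi tensor with respect to the Levi-Civita connection of $g$, trace-free, and of small rank; this is exactly the situation governed by a Cartan-type argument on the "nullity" of the fundamental form, which I would invoke in the Möbius setting.

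Next I would separate the two cases. In case 1, multiplicity $n-1$: here $B$ has at most two distinct eigenvalues, and the trace-free plus fixed-norm conditions leave only a discrete ambiguity for the eigenvalues of $\bar B$ at each point; the real content is the Codazzi equation along the $(n-1)$-dimensional distribution, which I expect to force the Möbius principal curvature functions to be constant along that distribution and ultimately to force constant Möbius sectional curvature. Once constant Möbius sectional curvature is established, I would invoke the Reduction Theorem of this paper to identify $f$ as a cylinder, cone, or rotational hypersurface over a curve (the curvature-spiral) in a $2$-dimensional space form, citing \cite{guo} for the precise classification; the deformations are then the obvious $1$-parameter families coming from the curve. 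In case 2, multiplicity $n-2$: the flat $(n-2)$-dimensional distribution $D$ together with the Codazzi and Ricci equations shows that $D$ is integrable with flat totally geodesic leaves and that the normal and tangent bundles split off an $R^{n-2}$ factor in an appropriate model; here I would apply the Reduction Theorem to reduce $f$ to a hypersurface of a $3$-dimensional space form $Q^3\in\{R^3, S^3, H^3\}$ times $R^{n-2}$ (respectively a cone over $S^3$ times $R^{n-3}$, or a rotational hypersurface over $R^3_+$), i.e. exactly the three geometric configurations (a), (b), (c). The Möbius metric of $f$ then restricts, up to the obvious product/warping factor, to the conformal metric of the surface $L^2\subset Q^3$, and I would check that the Möbius metric of the surface factor coincides with the classical surface invariants governing Bonnet pairs (the induced metric and, suitably interpreted, the mean curvature). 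Thus a Möbius deformation of $f$ preserving $g$ corresponds precisely to a deformation of $L^2$ preserving its metric and mean curvature, i.e. $L^2$ must be a Bonnet surface and $\bar f$ is built from the Bonnet mate $\bar L^2$; conversely any Bonnet mate yields a Möbius deformation by the Reduction Theorem run backwards.

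The main obstacle I anticipate is the last reduction step in case 2 — showing that the existence of a nontrivial $\bar B$ really does force the warped-product splitting and that no "exotic" deformation exists outside the three listed classes. Concretely, one must rule out deformations in which $\bar B$ fails to share the eigendistributions of $B$: a priori $\bar f$ need only share $g$, not the Möbius principal directions, so the curvature distributions of $\bar f$ could be rotated relative to those of $f$. Controlling this requires a careful examination of the common Gauss/Codazzi system for the Cartan-type "second-order envelope", essentially reproducing Cartan's analysis of conformally deformable hypersurfaces with highest multiplicity $n-2$ but now with the additional rigidity that the full Möbius metric $g$ (not merely its conformal class) is fixed — this extra constraint is what collapses Cartan's four classes III, IV and the ruled class down to nothing and leaves only the surface-like class I, realized in the three ambient models. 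I would handle this by showing that fixing $g$ pins down the Blaschke tensor and the Möbius form, and then that a trace-free Codazzi perturbation $B-\bar B$ of rank $\le 2$ compatible with these pinned data must vanish on the $(n-2)$-dimensional distribution and be parallel along it, which is exactly the surface-like condition; the remaining two-dimensional problem is then the classical Bonnet problem in the relevant space form.
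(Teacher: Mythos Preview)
Your proposal rests on a claim that is false and that, once removed, takes most of your argument with it: you assert that a deformation $\bar f$ sharing the M\"obius metric $g$ must have ``the same Blaschke tensor and M\"obius form'' as $f$, and later that ``fixing $g$ pins down the Blaschke tensor and the M\"obius form.'' This is not so. From the integrability equation \eqref{equa5},
\[
(n-2)A_{ij}=R_{ij}+\sum_k B_{ik}B_{kj}-(\mathrm{tr}\,A)\delta_{ij},
\]
so $A$ is determined by $g$ \emph{and} $B^2$, not by $g$ alone; likewise $C$ is tied to $B$ through \eqref{equa3}. Hence $\bar A\neq A$ and $\bar C\neq C$ in general, and your ``key linear-algebra step'' that $B-\bar B$ is a Codazzi tensor for $\nabla^g$ collapses: subtracting the two Codazzi equations \eqref{equa3} leaves the residual $\delta_{ij}(C_k-\bar C_k)-\delta_{ik}(C_j-\bar C_j)$, which need not vanish. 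Your downstream strategy (rank-two Codazzi perturbation, nullity argument, parallel along the big distribution) then has no footing.

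What the paper actually uses in place of your pinned $A,C$ is the only tensor genuinely intrinsic to $g$ beyond the Ricci data: the Weyl tensor. Concretely one forms
\[
S_{ijkl}=B_{ik}B_{jl}-B_{il}B_{jk}+\tfrac{1}{n-2}\sum_m\bigl(\delta_{ik}B_{jm}B_{ml}+\cdots\bigr),
\]
notes that $S_{ijkl}-\tfrac{1}{n(n-2)}(\delta_{ik}\delta_{jl}-\delta_{jk}\delta_{il})$ equals the Weyl tensor of $g$, and concludes $S=\bar S$. A nontrivial algebraic theorem (Theorem~\ref{th1}) then forces $B$ and $\bar B$ into an \emph{almost} simultaneous diagonal form: either both diagonal, or diagonal versus a $2\times 2$ block plus $(n-2)$ equal $\mu$'s. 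From there the two cases are handled by lengthy direct computation of $B_{ij,k}$, $\bar B_{ij,k}$, $C_i$, $\bar C_i$, $A_{ij}$, $\bar A_{ij}$ (which are \emph{different}), culminating in the Reduction Theorem once one shows $C_\alpha=0$ and $B_{12,\alpha}=0$. You should also be aware of a phenomenon your outline misses entirely: in the multiplicity-$(n-2)$ case there is a genuine second branch ($B_{12,\alpha}\neq 0$, forcing $\mu=0$) that does \emph{not} reduce to a Bonnet-surface construction but instead lands on the cone over Cartan's minimal isoparametric $N^3\subset S^4$; this has to be recognized and then excluded as a ``trivial'' deformation in the sense of Remark~\ref{rem3} (reparametrization by an isometry of $(M,g)$ not induced by an ambient M\"obius transformation). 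Your proposal gives no mechanism to detect or dispose of this example.
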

\begin{Remark}\label{rek1}
The hypothesis that the principal curvatures have constant multiplicities is necessary in this paper, because we need smooth frame of principal vectors.
But the hypothesis is weak, for there always exists an open dense subset $U$ of $M^n$ on which the multiplicities of the principal curvatures are locally constant (see \cite{reck}).

However,  the hypothesis is not necessary only in  Theorem \ref{main1}.  For dimension $n=4$, the condition that any principal curvature has multiplicity less than $n-2$ means that the principal curvatures have constant multiplicities. For dimension $n\geq 5$,  the frame of principal vectors used in Proposition \ref{prop62} is pointwise , so we do not need smooth frame of principal vector fields.
\end{Remark}
\begin{Remark}\label{rem2}
According to our classification result, among Cartan hypersurfaces
\cite{d2}, only the first class (surface-like hypersurfaces) may share
the same M\"{o}bius metric with their conformal deformations.
The other three classes of conformally deformable hypersurfaces
are M\"{o}bius rigid in our sense.
\end{Remark}
\begin{Remark}\label{rem3}
In the definition above, it is noteworthy that
the \emph{non-congruence} between $\bar{f}(M), f(M)$
(the images) is stronger than the
\emph{non-congruence} between $\bar{f}, f$
(the mappings),
because the same hypersurface $f(M)\subset R^n$ might be
given different parameterizations $f$ and $\bar{f}$
which are NOT M\"obius equivalent.
In other words there might exist an (isometrical) diffeomorphism
$\phi: M^n\to M^n$ and a M\"obius
transformation $\Psi:R^{n+1}\cup\{\infty\}\to R^{n+1}\cup\{\infty\}$
such that the following diagram commutes:
\[
\xymatrix{
  M^n \ar[d]_{f} \ar[r]^{\psi}
                & M^n \ar[d]^{\bar{f}}  \\
  R^{n+1}\cup\{\infty\} \ar[r]_{\Psi}
                & R^{n+1}\cup\{\infty\}             }
                \]
A typical example is the
\emph{M\"obius isoparametric hypersurface}
(see \cite{lw, hu} or Section~9 for the definition) with
three distinct constant M\"{o}bius principal curvatures
\[
\sqrt{\frac{n-1}{2n}},-\sqrt{\frac{n-1}{2n}},0,
\cdots,0.
\]
It is part of the cone over the
Cartan minimal isoparametric hypersurface
$y:N^3\to S^4(1)\hookrightarrow R^5\subset R^{n+1}$
 with three distinct principal curvatures.
This $N^3$ is a tube of a specific constant radius over
the Veronese embedding $RP^2\hookrightarrow S^4$.
It is well-known that its induced metric has a
4-dimensional isometry group whose elements do NOT
preserve the principal distributions in general.
Any such isometry $\phi$ extends to an isometry of the cone
(with respect to its M\"obius metric $g$) which is surely NOT
a M\"obius transformation of the ambient space.
Any possible deformation $\bar{f}$ to the cone $f$ preserving
M\"obius metric $g$ arises in this way, hence
is excluded from our notion (as well as the classification list)
 of M\"obius deformable hypersurfaces. See the discussion of this example in Section~9.
\end{Remark}
\begin{Remark}\label{rem4}
For a hypersurface $f:M\to R^{n+1}$ of constant M\"obius
curvature $c$, generally we can map any neighborhood of a
given point $p\in M$ to a neighborhood of another point
$q\in M$ by an isometry (of $(M,g)$) which is not induced from
a M\"obius transformation of the ambient space.
This is because any such hypersurface is conformally flat with
a specific principal direction which is not preserved by
a generic isometry of $(M,g)$. So they provide
the first class of deformable hypersurfaces.

Circular cylinder and spiral cylinder (constructed from
a circle or a logarithmic spiral, respectively)
belong to this class, yet they are different.
Each of them is homogeneous, namely invariant under a subgroup
of the M\"{o}bius group (of dimension at least $n$) which
acts transitively on $M^n$). On the other hand each of them
have a bigger isometry group (with respect to $(M^n,g)$)
which generally are not induced from M\"obius transformations.
So they resemble Cartan's example in the previous remark.
Yet these two hypersurfaces still have non-trivial deformations. See final remarks in Section~4.
\end{Remark}
\begin{Remark}\label{rem5}
Some comments on low dimensional case $n=3$ or $2$. We do not have any M\"{o}bius rigidity result because our algebraic theorem \ref{th1} fails in this case (see Remark~\ref{rem-dim3}).
But the construction of M\"{o}bius deformable hypersurfaces in Section~3 is still valid for $n=3$.

When $n=2$, generally a surface with a given M\"obius metric
is highly deformable. So we would consider deformation problems
under stronger restrictions. We just mention that any
Willmore surface admits a one-parameter associated family
of Willmore surfaces endowed with the same M\"obius metric.
For more on related topics see \cite{fujioka}.
\end{Remark}
\begin{Remark}\label{rem6}
It is very interesting that the non-trivial deformable examples
all arise from the classical construction of
cylinders, cones or rotational hypersurfaces over a given
hypersurface in a low-dimensional Euclidean subspace,
sphere or hyperbolic half-space, respectively.
Such constructions appeared many times in various contexts
and problems in M\"obius geometry and Lie sphere geometry.
We find that such examples have a nice characterization
(Theorem~\ref{retheorem}) in terms of its M\"obius invariants
introduced by the third author in \cite{w}.
We believe that this Reduction Theorem is a valuable tool
in simplifying discussions of many similar problems.
\end{Remark}
We organize the paper as follows.
In Section~2, we introduce M\"{o}bius invariants and
the M\"{o}bius congruence theorem for hypersurfaces
in $R^{n+1}~~(n\geq 3)$. Examples of M\"{o}bius deformable
hypersurfaces are given in Section~3 and 4
(in particular, Section~4 gives a new proof to the
classification theorem of hypersurfaces with constant
M\"obius sectional curvature). These examples are characterized
by our Reduction Theorems \ref{retheorem} (used in Section~9)
and \ref{retheorem2} (used in Section~4) proved in Section~5.

After these preparations,
as a purely algebraic consequence of the Gauss equation
we show in Section~6 that the (M\"obius) second fundamental
forms of $f$ and its deformation $\bar{f}$
could be diagonalized almost simultaneously.
Then we investigate our problem case by case.
When the highest multiplicity is less than $n-2$ we establish
the rigidity result (Theorem~\ref{main1}) in Section~7.
Section~8 treats the conformally flat case (i.e.
the highest multiplicity is equal to $n-1$) where we show such
deformable examples must have constant M\"obius curvature,
which have been classified in Section~4.
In Section~9 all deformable hypersurfaces with one principal
curvature of multiplicity $n-2$ are proved to be reducible to
cylinders, cones or rotational hypersurfaces using the
Reduction Theorem in Section~5. This finishes the proof
to the Main Theorem~\ref{main2}.

\section{M\"{o}bius invariants for hypersurfaces in $R^{n+1}$}
In this section we briefly review the theory of hypersurfaces
in M\"obius geometry. For details we refer to $\cite{w},\cite{w2}$.

Let $R^{n+3}_1$ be the Lorentz space, i.e., $R^{n+3}$ with inner
 product $<\cdot,\cdot>$ defined by
\[<x,y>=-x_0y_0+x_1y_1+\cdots+x_{n+2}y_{n+2},\] for
$x=(x_0,x_1,\cdots,x_{n+2}), y=(y_0,y_1,\cdots,y_{n+2})\in R^{n+3}$.

Let $f:M^{n}\rightarrow R^{n+1}$ be a hypersurface without umbilics and
assume that $\{e_i\}$ is an orthonormal basis with respect to the
induced metric $I=df\cdot df$ with $\{\theta_i\}$ the dual basis.
Let $II=\sum_{ij}h_{ij}\theta_i\theta_j$ and
$H=\sum_i\frac{h_{ii}}{n}$ be the second fundamental form and the
mean curvature of $f$, respectively. We define the M\"{o}bius
position vector $Y: M^n\rightarrow R^{n+3}_1$ of $f$ by
$$Y=\rho\left(\frac{1+|f|^2}{2},\frac{1-|f|^2}{2},f\right)~,
~~\rho^2=\frac{n}{n-1}(|II|^2-nH^2).$$
\begin{Theorem}\cite{w}
Two hypersurfaces $f,\bar{f}: M^n\rightarrow R^{n+1}$ are M\"{o}bius
equivalent if and only if there exists $T$ in the Lorentz group
$O(n+2,1)$ in $R^{n+3}_1$ such that $\bar{Y}=YT.$
\end{Theorem}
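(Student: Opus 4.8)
The plan is to prove both directions of this M\"obius congruence theorem by working with the light-cone lift of $f$ and exploiting the conformal invariance built into the normalizing factor $\rho$. Write $\xi=\left(\tfrac{1+|f|^2}{2},\tfrac{1-|f|^2}{2},f\right):M^n\to R^{n+3}_1$, so that $Y=\rho\,\xi$. One checks directly that $\langle\xi,\xi\rangle=0$, that $\xi_0+\xi_1\equiv 1$, and that $\langle d\xi,d\xi\rangle=df\cdot df=I$; thus $\xi$ is the section, normalized by $\xi_0+\xi_1=1$, of the null line bundle over $f$ under the standard identification of $R^{n+1}\cup\{\infty\}$ with the space of null lines in $R^{n+3}_1$ (the projectivized forward light cone, diffeomorphic to $S^{n+1}$). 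Under this identification the M\"obius group of $R^{n+1}\cup\{\infty\}$ is precisely the group of projective transformations induced by the subgroup of $O(n+2,1)$ preserving the forward light cone, the two lifts $\pm T\in O(n+2,1)$ of a given M\"obius transformation being distinguished by keeping $\xi_0+\xi_1>0$.

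For the forward implication, suppose $\bar f=\Psi\circ f$ for a M\"obius transformation $\Psi$ of $R^{n+1}\cup\{\infty\}$, and let $T\in O(n+2,1)$ be the lift of $\Psi$ preserving the forward cone. Then the normalized lift of $\bar f$ is $\bar\xi=\theta\,(\xi T)$ for a positive function $\theta$; this is verified most safely on the generating M\"obius transformations (Euclidean isometries and translations, with $\theta\equiv1$; the homothety $x\mapsto\mu x$, with $\theta\equiv\mu$; and the inversion $x\mapsto x/|x|^2$, with $\theta=1/|f|^2$), each giving an explicit block Lorentz matrix together with its conformal factor, and the class of such pairs being closed under composition. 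From $\bar\xi=\theta\,(\xi T)$ we get $\bar I=\langle d\bar\xi,d\bar\xi\rangle=\theta^2\langle d(\xi T),d(\xi T)\rangle=\theta^2 I$, so $\theta^2$ is the conformal factor of $\Psi$ along $f$. The key analytic input is the conformal transformation law $|\bar{II}|^2-n\bar H^2=\theta^{-2}(|II|^2-nH^2)$, which follows from the fact that the trace-free part $II_0=II-H\,I$ of the second fundamental form scales by $\theta$ (as a $(0,2)$-tensor) under a conformal change of the ambient metric, while $|II|^2-nH^2=|II_0|^2$; hence $\bar\rho^2=\theta^{-2}\rho^2$. Combining, $\bar Y=\bar\rho\,\bar\xi=(\theta^{-1}\rho)(\theta\,\xi T)=\rho\,\xi T=YT$, as desired. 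In particular this shows along the way that the M\"obius metric $g=\rho^2I=\langle dY,dY\rangle$ is a M\"obius invariant.

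For the converse, suppose $\bar Y=YT$ with $T\in O(n+2,1)$. Since $Y_0+Y_1=\rho>0$ (umbilic-freeness), both $Y$ and $\bar Y$ lie on the forward light cone, so $T$ preserves it and therefore induces a M\"obius transformation $\Psi$ of $R^{n+1}\cup\{\infty\}$ whose forward Lorentz lift is $T$. By the forward implication the M\"obius position vector of $\Psi\circ f$ equals $YT=\bar Y$. It remains to observe that the M\"obius position vector determines the immersion: from $Y$ one recovers $\rho=Y_0+Y_1$ and then $f=(Y_2,\dots,Y_{n+2})/(Y_0+Y_1)$. Hence $\Psi\circ f=\bar f$, so $f$ and $\bar f$ are M\"obius equivalent.

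The main obstacle is the forward implication, and within it two points carry the real content: first, the identification of the M\"obius group of $R^{n+1}\cup\{\infty\}$ with (the forward-cone component of) $O(n+2,1)$ acting on null lines, together with the bookkeeping of how the normalized lift $\xi$ transforms and of its conformal factor $\theta$ — handled cleanly by reduction to the generators; and second, the conformal-change computation $|\bar{II}|^2-n\bar H^2=\theta^{-2}(|II|^2-nH^2)$, i.e. the Weyl-type conformal invariance of $(|II|^2-nH^2)\,I$, which is a standard but slightly lengthy tensorial calculation (carried out in \cite{w}). Once these are in hand, the reverse implication and the recovery of $f$ from $Y$ are purely formal.
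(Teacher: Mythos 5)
Your argument is correct: the paper itself states this theorem without proof (quoting \cite{w}), and your proof is essentially the standard one given there — identify the M\"obius group with the forward-cone-preserving part of $O(n+2,1)$ acting on null lines, check on generators that the normalized lift $\xi$ transforms as $\bar\xi=\theta\,(\xi T)$, and use $|II|^2-nH^2=|II_0|^2$ to get $\bar\rho=\theta^{-1}\rho$, so that $\bar Y=YT$; the converse follows since $\rho=Y_0+Y_1>0$ lets one recover $f$ from $Y$. No gaps of substance (the fact that $T$ preserves the forward cone once it maps one forward null vector to another is the standard connectedness argument you implicitly invoke).
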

It follows immediately from Theorem 2.1 that
$$g=<dY,dY>=\rho^2df\cdot df$$
is a M\"{o}bius invariant, called the M\"{o}bius metric of $f$.

Let $\Delta$ be the Laplacian with respect to $g$. Define
$$N=-\frac{1}{n}\Delta Y-\frac{1}{2n^2}<\Delta Y,\Delta Y>Y,$$
which satisfies
$$<Y,Y>=0=<N,N>, ~~<N,Y>=1~.$$

Let $\{E_1,\cdots,E_n\}$ be a local orthonormal basis for $(M^n,g)$
with dual basis $\{\omega_1,\cdots,\omega_n\}$. Write
$Y_i=E_i(Y)$. Then we have
$$<Y_i,Y>=<Y_i,N>=0, ~<Y_i,Y_j>=\delta_{ij}, ~~1\leq i,j\leq n.$$
Let $\xi$ be the mean curvature sphere of $f$ written as
$$\xi=\left(\frac{1+|f|^2}{2}H+f\cdot e_{n+1},\frac{1-|f|^2}{2}H-f\cdot e_{n+1},Hf+e_{n+1}\right),$$
where $e_{n+1}$ is the unit normal vector field of $f$ in $R^{n+1}$.

Then $\{Y,N,Y_1,\cdots,Y_n,\xi\}$ forms a moving frame in
$R^{n+3}_1$ along $M^n$. We will use the following range of indices
in this section: $1\leq i,j,k\leq n$. We can write the structure
equations as following:
\begin{eqnarray*}
&&dY=\sum_iY_i\omega_i,\\
&&dN=\sum_{ij}A_{ij}\omega_iY_j+\sum_iC_i\omega_i\xi,\\
&&dY_i=-\sum_jA_{ij}\omega_jY-\omega_iN+\sum_j\omega_{ij}Y_j+\sum_jB_{ij}\omega_j\xi,\\
&&d\xi=-\sum_iC_i\omega_iY-\sum_{ij}\omega_iB_{ij}Y_j,
\end{eqnarray*}
where $\omega_{ij}$ is the connection form of the M\"{o}bius metric
$g$ and $\omega_{ij}+\omega_{ji}=0$. The tensors
$$
{\bf A}=\sum_{ij}A_{ij}\omega_i\otimes\omega_j,~~
{\bf B}=\sum_{ij}B_{ij}\omega_i\otimes\omega_j,~~
\Phi=\sum_iC_i\omega_i$$ are called the
Blaschke tensor, the M\"{o}bius second
fundamental form and the M\"{o}bius form of $f$, respectively.
The covariant derivative of
$C_i, A_{ij}, B_{ij}$ are defined by
\begin{eqnarray*}
&&\sum_jC_{i,j}\omega_j=dC_i+\sum_jC_j\omega_{ji},\\
&&\sum_kA_{ij,k}\omega_k=dA_{ij}+\sum_kA_{ik}\omega_{kj}+\sum_kA_{kj}\omega_{ki},\\
&&\sum_kB_{ij,k}\omega_k=dB_{ij}+\sum_kB_{ik}\omega_{kj}+\sum_kB_{kj}\omega_{ki}.
\end{eqnarray*}
The integrability conditions for the structure equations are given
by
\begin{eqnarray}
&&A_{ij,k}-A_{ik,j}=B_{ik}C_j-B_{ij}C_k,\label{equa1}\\
&&C_{i,j}-C_{j,i}=\sum_k(B_{ik}A_{kj}-B_{jk}A_{ki}),\label{equa2}\\
&&B_{ij,k}-B_{ik,j}=\delta_{ij}C_k-\delta_{ik}C_j,\label{equa3}\\
&&R_{ijkl}=B_{ik}B_{jl}-B_{il}B_{jk}
+\delta_{ik}A_{jl}+\delta_{jl}A_{ik}
-\delta_{il}A_{jk}-\delta_{jk}A_{il},\label{equa4}\\
&&R_{ij}:=\sum_kR_{ikjk}=-\sum_kB_{ik}B_{kj}+(tr{\bf
A})\delta_{ij}+(n-2)A_{ij},\label{equa5}\\
&&\sum_iB_{ii}=0, \sum_{ij}(B_{ij})^2=\frac{n-1}{n}, tr{\bf
A}=\sum_iA_{ii}=\frac{1}{2n}(1+n^2\kappa),\label{equa6}
\end{eqnarray}
where $R_{ijkl}$ denote the curvature tensor of $g$,
$\kappa=\frac{1}{n(n-1)}\sum_{ij}R_{ijij}$ is its normalized
M\"{o}bius scalar curvature. We know that all coefficients in the
structure equations are determined by $\{g, {\bf B}\}$ and we have
\begin{Theorem}$\cite{w}$\label{fundthe}
Two hypersurfaces $f: M^n\rightarrow R^{n+1}$ and $\bar{f}:
M^n\rightarrow R^{n+1} (n\geq 3)$ are M\"{o}bius equivalent if and
only if there exists a diffeomorphism $\varphi: M^n\rightarrow M^n$
which preserves the M\"{o}bius metric and the M\"{o}bius second
fundamental form.
\end{Theorem}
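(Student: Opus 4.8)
The plan is to prove both directions of the congruence theorem. The forward implication is a formal consequence of $O(n+2,1)$-equivariance, while the converse reduces to the observation that, when $n\geq 3$, all the remaining coefficients in the structure equations are algebraically determined by $\{g,\mathbf B\}$, after which the classical Cartan--Darboux uniqueness theorem for completely integrable linear systems finishes the argument. For the \emph{only if} part: if $f$ and $\bar f$ are M\"obius equivalent, i.e. $f=\Psi\circ\bar f$ for a M\"obius transformation $\Psi$ (after absorbing a reparametrization of $M^n$ if the equivalence is only at the level of images), then Theorem~2.1 provides $T\in O(n+2,1)$ with $\bar Y=YT$. Since $g=\langle dY,dY\rangle$ and since $N$, the $Y_i$, $\xi$ and the tensors $\mathbf A$, $\mathbf B$, $\Phi$ are all obtained from $Y$ by $O(n+2,1)$-equivariant operations (the $g$-Laplacian and the structure equations), the relation $\bar Y=YT$ forces $\bar g=g$ and $\bar{\mathbf B}=\mathbf B$; thus the relevant diffeomorphism preserves the M\"obius metric and the M\"obius second fundamental form.

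For the converse, suppose $\varphi\colon M^n\to M^n$ satisfies $\varphi^{*}\bar g=g$ and $\varphi^{*}\bar{\mathbf B}=\mathbf B$. Replacing $\bar f$ by $\bar f\circ\varphi$ we may assume $\varphi=\mathrm{id}$, so $f$ and $\bar f$ induce the same M\"obius metric $g$ and the same M\"obius second fundamental form $\mathbf B$ on the connected manifold $M^n$. Fix a local $g$-orthonormal frame $\{E_i\}$ with coframe $\{\omega_i\}$: it serves both hypersurfaces, the Levi-Civita connection forms $\omega_{ij}$ of $g$ agree, and $(B_{ij})$ is one and the same matrix for $f$ and $\bar f$. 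Now the remaining data are pinned down by the integrability conditions: equation (\ref{equa6}) gives $\mathrm{tr}\,\mathbf A=\frac{1}{2n}(1+n^{2}\kappa)$ with $\kappa$ the normalized M\"obius scalar curvature of $g$; rewriting (\ref{equa5}) as $(n-2)A_{ij}=R_{ij}+\sum_{k}B_{ik}B_{kj}-(\mathrm{tr}\,\mathbf A)\delta_{ij}$, where $R_{ij}$ is the Ricci tensor of $g$, determines $\mathbf A$ because $n\geq 3$; and contracting (\ref{equa3}) in $i=j$ together with $\sum_{i}B_{ii}=0$ yields $(n-1)C_{k}=-\sum_{i}B_{ik,i}$, which determines $\Phi$. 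Hence $f$ and $\bar f$ have identical $\omega_{ij}$, $A_{ij}$, $B_{ij}$, $C_i$ in their structure equations.

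With this in hand one runs the uniqueness argument. Collecting the frame fields into $F=(Y,N,Y_1,\dots,Y_n,\xi)$ and $\bar F=(\bar Y,\bar N,\bar Y_1,\dots,\bar Y_n,\bar\xi)$, both satisfy the same linear Pfaffian system $dF=F\,\Omega$ on $M^n$, where $\Omega$ is the matrix of $1$-forms read off from the structure equations; this system is completely integrable precisely because $\Omega$ obeys (\ref{equa1})--(\ref{equa6}), which it does, being built from the genuine hypersurface $f$. Both $F$ and $\bar F$ take values in the variety of \emph{null frames} of $R^{n+3}_1$ cut out by the Gram relations $\langle Y,Y\rangle=\langle N,N\rangle=0$, $\langle Y,N\rangle=1$, $\langle Y_i,Y_j\rangle=\delta_{ij}$, $\langle\xi,\xi\rangle=1$, all other products vanishing, a single orbit on which $O(n+2,1)$ acts simply transitively. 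Fixing $p_0\in M^n$, let $T_0\in O(n+2,1)$ be the unique element with $F(p_0)T_0=\bar F(p_0)$; then $FT_0$ and $\bar F$ solve the same completely integrable system and agree at $p_0$, hence everywhere on the connected $M^n$ (equivalently, $F\bar F^{-1}$ has vanishing differential and is therefore constant). In particular $\bar Y=YT_0$, and Theorem~2.1 gives the M\"obius equivalence of $f$ and the original $\bar f$.

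I expect the only real content — and the sole place the dimension assumption $n\geq 3$ intervenes — to be the algebraic recovery of the Blaschke tensor $\mathbf A$ from $g$ and $\mathbf B$ via the contracted Gauss equation (\ref{equa5}); everything else is either formal equivariance or the standard uniqueness theorem for completely integrable linear Pfaffian systems. The secondary subtlety is to upgrade the pointwise Lorentz transformation $T_0$ to a globally valid one, which is exactly why connectedness of $M^n$ and the complete integrability of the system are needed.
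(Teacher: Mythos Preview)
Your proof is correct and follows exactly the line the paper indicates: the paper does not supply its own proof of this theorem but cites \cite{w}, preceded only by the one-line remark that ``all coefficients in the structure equations are determined by $\{g,\mathbf B\}$.'' Your argument spells out precisely this remark---recovering $\mathbf A$ from the contracted Gauss equation \eqref{equa5} (using $n\ge 3$) and $\Phi$ from the traced Codazzi equation \eqref{equa3}---and then closes with the standard Cartan--Darboux uniqueness step, which is the intended mechanism behind the citation.
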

The second covariant derivative of $B_{ij}$ are defined by
$$dB_{ij,k}+\sum_mB_{mj,k}\omega_{mi}+\sum_mB_{im,k}\omega_{mj}+\sum_mB_{ij,m}\omega_{mk}=\sum_mB_{ij,km}\omega_m.$$
We have the following Ricci identities
$$B_{ij,kl}-B_{ij,lk}=\sum_mB_{mj}R_{mikl}+\sum_mB_{im}R_{mjkl}.$$
Coefficients of M\"{o}bius invariants and
Euclidean invariants are related by \cite{w2}
\begin{equation}\label{re2}
\begin{split}
B_{ij}&=\rho^{-1}(h_{ij}-H\delta_{ij}),\\
C_i&=-\rho^{-2}[e_i(H)+\sum_j(h_{ij}-H\delta_{ij})e_j(\log\rho)],\\
A_{ij}&=-\rho^{-2}[Hess_{ij}(\log\rho)-e_i(\log\rho)e_j(\log\rho)-Hh_{ij}]\\
&-\frac{1}{2}\rho^{-2}(|\nabla \log\rho|^2+H^2)\delta_{ij},
\end{split}
\end{equation}
where $Hess_{ij}$ and $\nabla$ are the Hessian matrix and the
gradient with respect to $I=df\cdot df$. Then
$$
A=\rho^2\sum_{ij}A_{ij}\theta_i\otimes\theta_j,~
B=\rho^2\sum_{ij}B_{ij}\theta_i\otimes\theta_j,~\Phi=\rho\sum_iC_i\theta_i.$$
We call eigenvalues
of $(B_{ij})$ as \emph{M\"{o}bius principal curvatures} of $f$. Clearly the
number of distinct M\"{o}bius principal curvatures is the same as
that of its distinct Euclidean principal curvatures.

Let $k_1,\cdots,k_n$ be the principal curvatures of $f$, and
$\{\lambda_1,\cdots,\lambda_n\}$ the corresponding M\"{o}bius
principal curvatures, then the curvature sphere of principal
curvature $k_i$ is
$$\xi_i=\lambda_iY+\xi=\left(\frac{1+|f|^2}{2}k_i+f\cdot e_{n+1},\frac{1-|f|^2}{2}k_i-f\cdot e_{n+1},k_if+e_{n+1}\right).$$
Note that $k_i=0$ if, and only if,
$$<\xi_i,(1,-1,0,\cdots,0)>=0.$$
This means that the curvature sphere of principal curvature $k_i$
is a hyperplane in $R^{n+1}$.

\section{Examples of M\"{o}bius deformable hypersurfaces}
This section describes the construction of M\"obius deformable hypersurfaces $M^n$ whose highest multiplicity of principal curvatures is $n-2$.
\begin{ex}\label{ex31}
Let $u: L^m\longrightarrow R^{m+1}$ be an immersed hypersurface. We define
\emph{the cylinder over $u$} in $R^{n+1}$ as
\[
f=(u,id):L^m\times R^{n-m}\longrightarrow
R^{m+1} \times R^{n-m}=R^{n+1},
\]
where $id:R^{n-m}\longrightarrow R^{n-m}$ is the identity map.
\end{ex}
\begin{PROPOSITION}\label{31}
Let $u,\bar{u}: L^2\longrightarrow R^3$ be a Bonnet pair. Then the cylinders
$f=(u,id):L^2\times R^{n-2}\longrightarrow R^{n+1}$ and
$\bar{f}=(\bar{u},id)$ are M\"{o}bius deformations to each other.
\end{PROPOSITION}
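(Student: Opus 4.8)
The plan is to check the two conditions in the definition of a M\"obius deformation: that $f$ and $\bar f$ induce the same M\"obius metric at corresponding points, and that $f(M)$ is not M\"obius congruent to $\bar f(M)$. For the first I would compute directly. Since $u$ and $\bar u$ form a Bonnet pair they share the first fundamental form $I$ and the mean curvature $H$ at corresponding points, and by the Gauss equation for surfaces in $R^3$ also the Gaussian curvature $K=k_1k_2$; hence $\{k_1,k_2\}=\{\bar k_1,\bar k_2\}$ pointwise and $|II_u|^2=k_1^2+k_2^2=4H^2-2K$. A unit normal of the cylinder $f=(u,\mathrm{id})$ is $(\nu,0)$ with $\nu$ a unit normal of $u$, so the Euclidean second fundamental form of $f$ is block diagonal, equal to $II_u$ on $TL^2$ and to $0$ on $TR^{n-2}$; thus $|II_f|^2=k_1^2+k_2^2$ and $H_f=\tfrac1n(k_1+k_2)=\tfrac2n H$. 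Substituting into $\rho^2=\tfrac{n}{n-1}(|II|^2-nH^2)$ gives
\[
\rho_f^{\,2}=\frac{n(k_1^2+k_2^2)-(k_1+k_2)^2}{n-1}=4H^2-\frac{2n}{n-1}\,K,
\]
which depends only on $H$ and $K$ and is therefore the same function on $L^2$ for $f$ and for $\bar f$; it is positive off the totally geodesic points of $u$, so $f$ and $\bar f$ are umbilic free (assuming, as for a genuine Bonnet surface, that $u$ is nowhere totally geodesic). Since $I_f=I_u\oplus(dx\!\cdot\!dx)=I_{\bar u}\oplus(dx\!\cdot\!dx)=I_{\bar f}$, I conclude $g_f=\rho_f^{\,2}I_f=\rho_{\bar f}^{\,2}I_{\bar f}=g_{\bar f}$ at corresponding points.

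For the non-congruence I would argue by contradiction, using the ruling direction of the cylinder. Reading $B$ off of $II_f$ via \eqref{re2}, the M\"obius principal curvatures of $f$ are $\rho_f^{-1}(k_1-H_f)$ and $\rho_f^{-1}(k_2-H_f)$ together with $\lambda_0:=-\rho_f^{-1}H_f$ of multiplicity $n-2\ge2$, and the $\lambda_0$-eigendistribution of the M\"obius second fundamental form is precisely the ruling distribution $TR^{n-2}$, which integrates to the affine $(n-2)$-planes $\{u(p)\}\times R^{n-2}$. On the dense open subset where $k_1,k_2$ are distinct and nonzero this is the \emph{unique} eigendistribution of $B$ of rank larger than one, hence a M\"obius invariant of the immersed hypersurface, and by continuity the ruling distribution is then M\"obius invariant everywhere (and likewise for $\bar f$). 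So if a M\"obius transformation $\Psi$ of $R^{n+1}\cup\{\infty\}$ satisfied $\Psi(\bar f(M))=f(M)$, it would carry the rulings of $\bar f(M)$ onto the rulings of $f(M)$.

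Now the rulings of $f(M)$ are mutually parallel affine $(n-2)$-planes, so their closures in $S^{n+1}=R^{n+1}\cup\{\infty\}$ are $(n-2)$-spheres that all pass through $\infty$ and meet only there, and the same holds for $\bar f(M)$; hence $\Psi(\infty)$ lies on every ruling of $f(M)$, forcing $\Psi(\infty)=\infty$. Thus $\Psi$ is a similarity of $R^{n+1}$, and since a similarity preserves parallelism and orthogonality its linear part sends the common ruling direction $\{0\}\times R^{n-2}$ of $\bar f(M)$ to that of $f(M)$, i.e. to itself, and therefore respects the splitting $R^{n+1}=R^3\times R^{n-2}$. Hence $\Psi=\Psi_1\times\Psi_2$ with $\Psi_1$ a similarity of $R^3$, and comparing the $R^3$-slices of $\Psi(\bar f(M))=f(M)$ gives $\Psi_1(\bar u(L^2))=u(L^2)$. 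Because $u(L^2)$ and $\bar u(L^2)$ carry the same induced metric $I$, the ratio of $\Psi_1$ is forced to be $1$, so $\Psi_1$ is a rigid motion of $R^3$ taking $\bar u(L^2)$ onto $u(L^2)$ --- contradicting that $u,\bar u$ is a Bonnet pair. Therefore no such $\Psi$ exists, and the two cylinders are M\"obius deformations of each other.

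I expect the last step to be the main obstacle: passing from ``the cylinders are M\"obius congruent'' to ``the base surfaces are congruent in $R^3$''. The reduction to a similarity of $R^{n+1}$ via $\Psi(\infty)=\infty$ is clean, but one then has to argue that the similarity ratio must be $1$ and that the resulting identification is a genuine congruence of the surfaces rather than of differently parametrized copies; this is exactly where the meaning of a Bonnet pair (two \emph{non-congruent} surfaces sharing a first fundamental form) enters, and where, for the exceptional self-similar base metrics, a short additional argument would be needed. Alternatively the whole non-congruence step can be run through the M\"obius congruence Theorem~\ref{fundthe}: a M\"obius congruence would supply a diffeomorphism of $L^2\times R^{n-2}$ preserving $g$ and $B$, which must preserve the ruling distribution and hence descend to the base, again reducing the claim to the rigidity statement that defines a Bonnet pair.
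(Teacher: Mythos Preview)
Your computation of the M\"obius metric is identical to the paper's: both obtain $\rho^2=4H_u^2-\tfrac{2n}{n-1}K_u$ and conclude $g_f=g_{\bar f}$ from $I_u=I_{\bar u}$ and $H_u=H_{\bar u}$.

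Where you genuinely diverge is in the non-congruence step. The paper disposes of this in one line: since the Bonnet correspondence preserves the principal curvatures but \emph{not} the principal directions, the same holds for $f,\bar f$, so $B\ne\bar B$ at corresponding points and the cylinders are declared non-trivial deformations. Strictly speaking this only rules out a M\"obius equivalence realized by the identity diffeomorphism of $L^2\times R^{n-2}$, whereas the paper's own Definition (and Remark~\ref{rem3}) asks for non-congruence of the \emph{images} $f(M),\bar f(M)$. Your argument addresses exactly this stronger statement: you identify the ruling distribution as the M\"obius-invariant eigendistribution of $B$ of multiplicity $n-2$, use it to force any hypothetical M\"obius equivalence $\Psi$ to fix $\infty$, hence be a similarity respecting the splitting $R^3\times R^{n-2}$, and then descend to a congruence of the base surfaces in $R^3$, contradicting the Bonnet-pair hypothesis. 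This is a cleaner justification of image-level non-congruence than the paper provides. The one soft spot you already flag---pinning the similarity ratio to $1$---is genuine (it fails for self-similar base surfaces such as cones), but for the purpose of exhibiting examples of M\"obius-deformable hypersurfaces it is harmless, and your alternative route through Theorem~\ref{fundthe} and the ruling distribution would close it in the generic case as well.
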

\begin{proof} Let $\eta$ be the unit
normal vector of surface $u$. Then $e_{n+1}=(\eta,\vec{0})\in
R^{n+1}$ is the unit normal vector of hypersurface $f$. The first
fundamental form $I$ and the second fundamental form $II$ of
hypersurface $f$ are given by
\begin{equation}\label{II31}
I=I_u+I_{R^{n-2}}, \;\; II=II_u,
\end{equation}
where $I_u,II_u$ are the first and second fundamental forms of $u$,
respectively, and $I_{R^{n-2}}$ denotes the standard metric of $R^{n-2}$.
Let $k_1,k_2$ be principal curvatures of surface $u$. The principal curvatures of hypersurface $f$ are obviously
$$k_1,k_2,0,\cdots,0.$$
The M\"{o}bius metric $g$ of hypersurface $f$ is
\begin{equation}\label{eq-g1}
g=\rho^2I=\frac{n}{n-1}(|II|^2-nH^2)I
=\left(4H_u^2-\frac{2n}{n-1}K_u\right)(I_u+I_{R^{n-2}}),
\end{equation}
where $H_u,K_u$ are the mean curvature of $u$ and
Gauss curvature of $u$, respectively.
Since $\bar{u}:L^2\longrightarrow R^3$ share the same metric
$I_u$ and mean curvature $H_u$ as $u$, the cylinder
$\bar{f}=(\bar{u},id):L^2\times R^{n-2}\longrightarrow R^{n+1}$
share the same factor $\rho$ and M\"obius metric, i.e.
\[
g=\bar{g}.
\]
Note that the correspondence between the Bonnet pair $u,\bar{u}$
preserves the principal curvatures,
yet NOT the principal directions.
By \eqref{II31} this is also true between $f,\bar{f}$.
So we conclude that
$\bar{f}$ is a non-trivial M\"obius deformation to $f$. This completes the proof to Proposition \ref{31}.
\end{proof}
\begin{ex}\label{ex32}
Let $u:L^m\longrightarrow S^{m+1}\subset R^{m+2}$ be an
immersed hypersurface.
We define \emph{the cone over $u$} in $R^{n+1}$ as
\begin{equation*}
\begin{split}
&f:L^m\times R^+\times R^{n-m-1}\longrightarrow R^{n+1},\\
&~~~~~~f(u,t,y)=(tu,y),
\end{split}
\end{equation*}
\end{ex}
\begin{PROPOSITION}\label{32}
Let $u,\bar{u}:L^2\longrightarrow S^3$ be a Bonnet pair in the
standard 3-sphere. Then the cone hypersurfaces $f:L^2\times
R^+\times R^{n-3}\longrightarrow R^{n+1}$ and $\bar{f}$
over them are M\"{o}bius deformations to each other.
\end{PROPOSITION}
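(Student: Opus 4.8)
The plan is to mimic the proof of Proposition \ref{31}, replacing the cylinder construction $L^2\times R^{n-2}\subset R^{n+1}$ by the cone construction $CL^2\times R^{n-3}\subset R^{n+1}$ and reducing everything to the Bonnet data of $u,\bar u$ on $S^3$. First I would set up the geometry of the cone: with $f(u,t,y)=(tu,y)$ where $u:L^2\to S^3\subset R^4$ and $y\in R^{n-3}$, I compute that the unit normal of $f$ is $e_{n+1}=(\eta,\vec 0)\in R^4\times R^{n-3}$, where $\eta$ is the unit normal of $u$ in $S^3$ (note $\eta\perp u$, so $(\eta,\vec 0)$ is indeed normal to the cone). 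The first fundamental form splits as $I=t^2 I_u + dt^2 + I_{R^{n-3}}$, and the second fundamental form is $II = t\, II_u$ (the $t$, $y$ directions contributing nothing, since the cone is ruled along them). Consequently the principal curvatures of $f$ are $k_1/t,\ k_2/t,\ 0,\dots,0$, where $k_1,k_2$ are the principal curvatures of $u$ in $S^3$; in particular one principal curvature has multiplicity $n-2$ everywhere (away from the totally geodesic locus, i.e.\ on the umbilic-free part).

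Next I would compute the M\"obius factor $\rho$. Using $\rho^2=\tfrac{n}{n-1}(|II|^2-nH^2)$ and the relation between the extrinsic invariants of $u$ in $S^3$ and in $R^4$ — namely, if $\tilde H_u,\tilde K_u$ denote the mean curvature and extrinsic Gauss curvature of $u$ in $S^3$, then $|II_u|^2-2\tilde H_u^2 = (k_1-k_2)^2/2$ is intrinsic up to the constant curvature shift — one finds
\begin{equation*}
\rho^2 = \frac{1}{t^2}\Big(4\tilde H_u^2 - \frac{2n}{n-1}\tilde K_u + c\Big)
\end{equation*}
for an appropriate constant $c$ coming from the curvature of $S^3$; more importantly, $\rho^2$ is a function of the Bonnet data $(I_u,\tilde H_u)$ of $u$ divided by $t^2$. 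Then the M\"obius metric becomes
\begin{equation*}
g=\rho^2 I = \Big(4\tilde H_u^2-\tfrac{2n}{n-1}\tilde K_u + c\Big)\Big(I_u + \tfrac{dt^2+I_{R^{n-3}}}{t^2}\Big),
\end{equation*}
which depends only on $I_u$, $\tilde H_u$ (and the flat model metric $\tfrac{dt^2+|dy|^2}{t^2}$ on $R^+\times R^{n-3}$, which is the hyperbolic metric and is independent of $u$).

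Since $u,\bar u:L^2\to S^3$ form a Bonnet pair they share $I_u$ and $\tilde H_u$, hence by the displayed formula $f$ and $\bar f$ share the same $\rho$ and the same M\"obius metric $g=\bar g$. Finally I would argue non-congruence: the Bonnet correspondence between $u$ and $\bar u$ preserves principal curvatures but not principal directions, and by the formula $II=t\,II_u$ (hence the M\"obius second fundamental form $B$ of the cone is, up to the conformal factor, $t\,II_u$ in the $L^2$-directions and $0$ elsewhere) the same is true for $f$ and $\bar f$; so $f$ and $\bar f$ are not related by any diffeomorphism preserving both $g$ and $B$, and Theorem~\ref{fundthe} then shows they are not M\"obius equivalent. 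Thus $\bar f$ is a genuine M\"obius deformation of $f$, completing the proof. The main obstacle I anticipate is purely bookkeeping: getting the curvature-shift constant $c$ and the precise dependence of $\rho^2$ on the intrinsic/extrinsic data of $u\subset S^3$ exactly right, using the Gauss equation in $S^3$ ($\tilde K_u = K_u - 1$ intrinsically, with $K_u$ the intrinsic Gauss curvature) so that the expression $4\tilde H_u^2-\tfrac{2n}{n-1}\tilde K_u$ is manifestly Bonnet-invariant; once that is in place, the rest follows exactly as in Proposition~\ref{31}.
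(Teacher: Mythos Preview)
Your proposal is correct and follows essentially the same route as the paper: compute $I=t^2I_u+I_{R^{n-2}}$ and $II=t\,II_u$, read off the principal curvatures $k_1/t,k_2/t,0,\dots,0$, and obtain $g=\big[4H_u^2-\tfrac{2n}{n-1}(K_u-1)\big](I_u+I_{H^{n-2}})$, which depends only on the Bonnet data $(I_u,H_u)$; then argue non-congruence from the failure of the Bonnet correspondence to preserve principal directions. Your anticipated bookkeeping with the constant $c$ resolves exactly as you guessed via the Gauss equation in $S^3$ (so $c=0$ when you use the extrinsic $\tilde K_u=k_1k_2=K_u-1$), matching the paper's formula \eqref{eq-g2}.
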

\begin{proof}
The first and second
fundamental forms of hypersurface $f$ are, respectively,
\[
I=t^2I_u+I_{R^{n-2}}, \;\; II=t~II_u,
\]
where $I_u,II_u,I_{R^{n-2}}$ are understood as before.
Let $k_1,k_2$ be principal curvatures of surface $u$.
The principal curvatures of hypersurface $f$ are
\[
\frac{1}{t}k_1,\frac{1}{t}k_2,0,\cdots,0.
\]
Thus the M\"{o}bius metric $g$ of hypersurface $f$ is
\begin{equation}\label{eq-g2}
\begin{split}
g=\rho^2I
&=\frac{1}{t^2}\left[4H_u^2-\frac{2n}{n-1}(K_u-1)\right]
(t^2I_u+I_{R^{n-2}})\\
&=\left[4H_u^2-\frac{2n}{n-1}(K_u-1)\right]
(I_u+I_{H^{n-2}}),
\end{split}
\end{equation}
where $H_u,K_u$ are the mean curvature and Gauss curvature of
$u$, respectively, $I_{H^{n-2}}$ is the standard hyperbolic of $R^{n-2}_+=R^+\times R^{n-3}$.
Since $\bar{u}:L^2\longrightarrow S^3$ share the same metric
$I_u$ and mean curvature $H_u$ as $u$, the cone over $\bar{u}$
$\bar{f}:L^2\times R^+\times R^{n-3}\longrightarrow R^{n+1}$
share the same M\"obius metric, i.e.
\[
g=\bar{g}.
\]
By the same reason in the proof to Proposition~\ref{31},
we know that their principal directions do NOT correspond.
So they are genuine deformations to each other. This completes the proof to Proposition \ref{32}.
\end{proof}

\begin{ex}\label{ex33}
Let $R^{m+1}_+=\{(x_1,\cdots,x_m,x_{m+1})\in R^{m+1}|x_{m+1}>0\}$
be the upper half-space endowed with the standard
hyperbolic metric
\[
ds^2=\frac{1}{x_{m+1}^2}\sum_{i=1}^m dx_i^2.
\]
Let $u=(x_1,\cdots,x_{m+1}):M^m\longrightarrow R^{m+1}_+$ be
an immersed hypersurface. We define
\emph{rotational hypersurface over $u$} in $R^{n+1}$ as
\begin{equation*}
\begin{split}
&f:L^m\times S^{n-m}\longrightarrow R^{n+1},\\
&f(x_1,\cdots,x_{m+1},\phi)=(x_1,\cdots,x_m,x_{m+1}\phi),
\end{split}
\end{equation*}
where $\phi:S^{n-m}\longrightarrow R^{n-m+1}$ is the standard sphere.
\end{ex}
\begin{PROPOSITION}\label{33}
Let $u,\bar{u}:L^2\longrightarrow R^3_+$ be
a Bonnet pair in the hyperbolic 3-space.
Then the rotational hypersurfaces
$f=(x_1,x_2,x_3\phi):L^2\times S^{n-2}\longrightarrow R^{n+1}$
and $\bar{f}=(\bar{x}_1,\bar{x}_2,\bar{x}_3\phi)$ are
M\"{o}bius deformations to each other.
\end{PROPOSITION}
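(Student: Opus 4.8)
The plan is to follow exactly the computational pattern already established in the proofs of Proposition~\ref{31} and Proposition~\ref{32}, now adapted to the rotational construction of Example~\ref{ex33}. First I would compute the first and second fundamental forms of the rotational hypersurface $f=(x_1,x_2,x_3\phi):L^2\times S^{n-2}\to R^{n+1}$ in terms of the data of the surface $u=(x_1,x_2,x_3):L^2\to R^3_+$. Parameterizing $L^2$ by coordinates on which $u$ is an isometric immersion into the half-space $R^3_+$ with the flat background metric $\sum dx_i^2$, a direct calculation (differentiating $f$ and using that $\phi:S^{n-2}\to R^{n-1}$ is the unit sphere) gives the Euclidean induced metric and second fundamental form of $f$ as
\begin{equation*}
I = I_u^{\mathrm{eucl}} + x_3^2\, I_{S^{n-2}}, \qquad
II = II_u^{\mathrm{hyp}}\text{-type terms},
\end{equation*}
more precisely one finds that, after factoring out $x_3^{-2}$, the pair $(I,II)$ of $f$ encodes the \emph{hyperbolic} first and second fundamental forms $I_u,II_u$ of $u$ as a hypersurface of $R^3_+$. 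The key point, exactly as in the two preceding propositions, is that the conformal factor $\rho^2=\frac{n}{n-1}(|II|^2-nH^2)$ of $f$ turns out to depend only on the hyperbolic mean curvature $H_u$ and hyperbolic Gauss curvature $K_u$ of $u$, via a formula of the shape
\begin{equation*}
\rho^2 I = \Big[\,4H_u^2-\tfrac{2n}{n-1}(K_u+1)\,\Big]\big(I_u+I_{S^{n-2}}\big),
\end{equation*}
where $I_u$ now denotes the hyperbolic metric on $L^2$ and the $+1$ (as opposed to the $-1$ in \eqref{eq-g2}) reflects the sectional curvature $-1$ of $H^3$ together with the sphere factor. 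I would also record the principal curvatures of $f$: if $k_1,k_2$ are the hyperbolic principal curvatures of $u$ in $R^3_+$, then those of $f$ are $x_3^{-1}$ times a suitable expression, with the remaining $n-2$ principal curvatures coming from the $S^{n-2}$ directions and all equal to each other, so that the highest multiplicity is indeed $n-2$.

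With these formulas in hand the argument concludes in the same way as before. Since $u$ and $\bar u$ form a Bonnet pair in $R^3_+$, they share the same hyperbolic induced metric $I_u$ and the same hyperbolic mean curvature $H_u$; by the Gauss equation of $H^3$ the Gauss curvature $K_u$ is then determined by $I_u$ alone, hence is also shared. Therefore the bracketed factor above is identical for $u$ and $\bar u$, the two rotational hypersurfaces carry the same M\"obius metric $g=\bar g$, and one only has to check that the natural diffeomorphism between them (the Bonnet correspondence on the $L^2$ factor, times the identity on $S^{n-2}$) does \emph{not} match up the M\"obius second fundamental forms: this is inherited from the fact that a Bonnet pair preserves principal curvatures but not principal directions, so by Theorem~\ref{fundthe} (or rather its contrapositive) $f$ and $\bar f$ are not M\"obius congruent, and $\bar f$ is a genuine M\"obius deformation of $f$.

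The step I expect to be the main (if modest) obstacle is the explicit curvature bookkeeping in the first paragraph: one must be careful to distinguish the \emph{Euclidean} fundamental forms of $f$ in $R^{n+1}$, which are what enter the definition of $\rho$, from the \emph{hyperbolic} fundamental forms of $u$ in $R^3_+$, which are what the Bonnet hypothesis controls, and to track correctly the contribution of the $S^{n-2}$ factor and of the warping function $x_3$. In particular the normal vector of $f$ must be computed from the hyperbolic unit normal $\eta$ of $u$ rescaled by $x_3$, and the relation $|II|^2-nH^2$ must be re-expressed through $H_u$ and $K_u$ using the Gauss equation of hyperbolic space. Once the identity for $\rho^2 I$ is verified, everything else is a verbatim repetition of the reasoning in Propositions~\ref{31} and~\ref{32}, so I would keep that part brief and refer back to those proofs.
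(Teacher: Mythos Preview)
Your proposal is correct and follows essentially the same route as the paper: compute $I=x_3^2(I_u+I_{S^{n-2}})$ and $II$ for the rotational hypersurface, extract the principal curvatures, and arrive at the Möbius metric formula $g=[4H_u^2-\tfrac{2n}{n-1}(K_u+1)](I_u+I_{S^{n-2}})$, then conclude via the Bonnet-pair hypothesis exactly as in Propositions~\ref{31} and~\ref{32}. The paper carries out the bookkeeping you flag as the main obstacle by passing through the explicit isometry $\tau:R^3_+\to H^3\subset R^4_1$ to relate the hyperbolic $II_u$ to Euclidean derivatives, and writes the Euclidean unit normal of $f$ as $\tfrac{1}{x_3}(\eta_1,\eta_2,\eta_3\phi)$; your anticipated computation is the same in substance.
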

\begin{proof} Let $R^4_1$ be the
Lorentz space with inner product
\[
<y,y>=-y_1^2+y_2^2+y_3^2+y_4^2,\;\; y=(y_1,y_2,y_3,y_4).
\]
Let $H^3=\{y\in R^4_1|<y,y>=-1,y_1>0\}$ be the hyperbolic space.
Introduce isometry $\tau:R^3_+\longrightarrow H^3$ as below:
\[
\tau(x_1,x_2,x_3)=\left(\frac{1+x_1^2+x_2^2+x_3^2}{2x_3},
\frac{1-x_1^2-x_2^2-x_3^2}{2x_3},\frac{x_1}{x_3},
\frac{x_2}{x_3}\right).
\]
The inverse $\tau^{-1}:H^3\longrightarrow R^3_+$ is
$\tau^{-1}(y_1,y_2,y_3,y_4)=(\frac{y_3}{y_1+y_2},
\frac{y_4}{y_1+y_2},\frac{1}{y_1+y_2}).$

Let $\eta$ be the unit normal vector of surface $u$ in $R^3_+$.
Write
$\eta=(\eta_1,\eta_2,\eta_3).$
Since $\eta$ is the unit normal vector,then
\[
\frac{\eta_1^2+\eta_2^2+\eta_3^2}{x_3^2}=1.
\]
Thus the unit normal vector of hypersurface $f$ in $R^{n+1}$ is
\[
\xi=\frac{1}{x_3}(\eta_1,\eta_2,\eta_3\phi).
\]
The first fundamental form of $u$ is
\[
I_u=\frac{1}{x_3^2}(dx_1\cdot dx_1+dx_2\cdot dx_2+dx_3\cdot dx_3).
\]
The second fundamental form of $u$ is
\[
II_u=-<\tau_*(du),\tau_*(d\eta)>
=\frac{1}{x_3^2}(dx_1\cdot d\eta_1+dx_2\cdot d\eta_2
+dx_3\cdot d\eta_3)-\frac{\eta_3}{x_3}I_u.
\]
Now we can write out the first and the second fundamental forms
of $f$:
\[
I=df\cdot df=x_3^2(I_u+I_{S^{n-2}}),
~~II=x_3II_u-\eta_3I_u-\eta_3I_{s^{n-2}},
\]
where $I_{S^{n-2}}$ is the standard metric of $S^{n-2}$. Let
$k_1,k_2$ be principal curvatures of $u$. Then principal
curvatures of hypersurface $f$ are
\[
\frac{k_1}{x_3}-\frac{\eta_3}{x_3^2}~,~
\frac{k_2}{x_3}-\frac{\eta_3}{x_3^2}~,~\frac{-\eta_3}{x_3^2}~,
\cdots,\frac{-\eta_3}{x_3^2}~.
\]
Thus
\[
\rho^2=\frac{n}{n-1}(|II|^2-nH^2)
=\frac{1}{x^2_3}\left[4H_u^2-\frac{2n}{n-1}(K_u+1)\right],
\]
where $H_u,K_u$ are the mean curvature and Gauss curvature of
$u$, respectively.
So the M\"{o}bius metric of hypersurface $f$ is
\begin{equation}\label{eq-g3}
g=\rho^2I=\left[4H_u^2-\frac{2n}{n-1}(K_u+1)\right]
(I_u+I_{S^{n-2}}).
\end{equation}
Since $u$ and $\bar{u}$ are a pair of Bonnet surfaces,
$H_u=H_{\bar{u}},K_u=K_{\bar{u}},I_u=I_{\bar{u}}$, thus
$\bar{f}=(\bar{x}_1,\bar{x}_2,\bar{x}_3\phi):L^2\times
S^{n-2}\longrightarrow R^{n+1}$,
the rotational hypersurface over $\bar{u}$,
is endowed with the same M\"obius metric $g$.
Similar to previous discussions we know that they are NOT
congruent. This completes the proof to Proposition \ref{33}.
\end{proof}
\begin{Remark}\label{rem-metric}
We note that the M\"obius metric $g$ in these three cases
\eqref{eq-g1}\eqref{eq-g2}\eqref{eq-g3}
could be unified in a single formula:
\begin{equation}\label{eq-g}
g=\left[4H_u^2-\frac{2n}{n-1}(K_u+c)\right](I_u+I_{N^{n-2}(c)}).
\end{equation}
Here $H_u,K_u,I_u$ are the mean curvature, the Gauss curvature
and the first fundamental form of the surface $u:L^2\to N^3(-c)$
in a three dimensional space form of constant curvature $-c$;
$I_{N^{n-2}(c)}$ is the Riemannian metric of a $(n-2)-$dimensional space form of constant curvature $c$.
This will be used in Section~9 to show that
any M\"obius deformation to any example in these
three propositions arises in this way.
In other words, the possible deformations are as many as that of
the corresponding Bonnet surface.
\end{Remark}

\section{Hypersurfaces with constant M\"{o}bius curvature: deformations and classification}
As pointed out in the introduction, hypersurfaces
with constant M\"{o}bius sectional curvature form a new class of
deformable hypersurfaces.
In this section, we list hypersurfaces with constant M\"{o}bius
curvature, i.e., constant sectional curvature with respect to
the M\"{o}bius metric $g$, and compute the M\"{o}bius
invariants. Then we give a new proof to the classification
of such hypersurfaces using a
reduction theorem~\ref{retheorem2} in Section~5.
\begin{ex}\label{ex1}
The cylinder in $R^{n+1}$ over $\gamma(s)\subset R^2$ is defined by
\[f(s,id)=(\gamma(s),id):~I\times R^{n-1}\longrightarrow R^{n+1},\]
where $id:R^{n-1}\longrightarrow R^{n-1}$ is the identity mapping.
\end{ex}
\begin{Remark}
This is exactly Example~\ref{ex31} when $m=1$.
\end{Remark}
The first fundamental form $I$ and the second
fundamental form $II$ of hypersurface $f$ are, respectively,
\[ I=ds^2+I_{R^{n-1}}, \;\; II=\kappa(s) ds^2,\]
where $\kappa(s)$ is the geodesic curvature of
$\gamma\subset R^2$, $s$ is the arc-length parameter,
and $I_{R^{n-1}}$ is the standard Euclidean metric of $R^{n-1}$.
So we have
$(h_{ij})=\operatorname{diag}(\kappa,0,\cdots,0)~,
~H=\frac{\kappa}{n}~,~\rho=\kappa~.$
Thus the M\"{o}bius metric $g$ of hypersurface $f$ is
\[
g=\rho^2 I=\kappa(s)^2(ds^2+I_{R^{n-1}}).
\]
The M\"{o}bius invariants of $f$ under an orthonormal frame
(consisting of principal directions) can be obtained as below
using \eqref{re2}:
\begin{equation}\label{rre12}
\begin{split}
&C_1=-\frac{\kappa_s}{\kappa^2}~, ~ C_2=\cdots=C_n=0,\\
&(B_{ij})=\operatorname{diag}\left(\frac{n-1}{n},\frac{-1}{n},\cdots,\frac{-1}{n}\right),\\
&(A_{ij})=\operatorname{diag}(a_1,a_2,\cdots,a_2),
\end{split}
\end{equation}
where $a_1=-\dfrac{\kappa_{ss}}{\kappa^3}
+\dfrac{3}{2}\dfrac{(\kappa_s)^2}
{\kappa^4}+\dfrac{2n-1}{2n^2}~,~
a_2=-\dfrac{1}{2}\left[\dfrac{(\kappa_s)^2}{\kappa^4}
+\dfrac{1}{n^2}\right].$
\begin{ex}\label{ex2}
The cone in $R^{n+1}$ over $\gamma(s)\subset S^2(1)\subset R^3$ is defined by
\[
f(s,t,id)=(t\gamma(s),id):~I\times R^{+}\times R^{n-2}\longrightarrow R^{n+1},
\]
where $id:R^{n-2}\longrightarrow R^{n-2}$ is identity mapping and
$R^{+}=\{t|~t>0\}$.
\end{ex}
\begin{Remark}
This is exactly Example~\ref{ex32} when $m=1$.
\end{Remark}
The first and second
fundamental forms of hypersurface $f$ are
\[I=t^2ds^2+I_{R^{n-1}}~, \;\; II=t\kappa(s) ds^2.\]
So we have
$(h_{ij})=\operatorname{diag}
\left(\frac{\kappa}{t},0,\cdots,0\right)~,~H=\frac{\kappa}{nt}~,
~\rho=\frac{\kappa}{t}~.$
Thus the M\"{o}bius metric $g$ of hypersurface $f$ is
\[
g=\rho^2I=\frac{\kappa(s)^2}{t^2}\left(t^2ds^2+I_{R^{n-1}}\right)
=\kappa(s)^2(ds^2+I_{H^{n-1}}),
\]
where $I_{H^{n-1}}$ is the standard hyperbolic metric of $H^{n-1}(-1)$.
The M\"{o}bius invariants of $f$ under an orthonormal frame (consisting of principal directions) can be obtained similarly:
\begin{equation}\label{rre13}
\begin{split}
&C_1=-\frac{\kappa_s}{\kappa^2}~,~ C_2=\cdots=C_n=0,\\
&(B_{ij})=\operatorname{diag}\left(\frac{n-1}{n},\frac{-1}{n},\cdots,\frac{-1}{n}\right),\\
&(A_{ij})=\operatorname{diag}(a_1,a_2,\cdots,a_2),
\end{split}
\end{equation}
where $a_1=-\dfrac{\kappa_{ss}}{\kappa^3}
+\dfrac{3}{2}\dfrac{(\kappa_s)^2}
{\kappa^4}+\dfrac{1}{2\kappa^2}+\dfrac{2n-1}{2n^2}~,~
a_2=-\dfrac{1}{2}\left[\dfrac{(\kappa_s)^2}{\kappa^4}
+\dfrac{1}{\kappa^2}+\dfrac{1}{n^2}\right].$
\begin{ex}\label{ex3}
The rotational hypersurface in $R^{n+1}$ over $\gamma(s)\subset R^2_+=\{(x,y)\in R^2|~y>0\}\subset R^3$ is defined by
\[
f(x,y,\theta)=(x,y\theta):~I\times S^{n-1}\longrightarrow R^{n+1},
\]
where $\theta:S^{n-1}\longrightarrow R^{n}$ is the
standard immersion of a round sphere, $R^2_+$ is regarded as
the Poincare half plane with the hyperbolic metric
$ds^2=\frac{1}{y^2}(dx^2+dy^2)$.
\end{ex}
\begin{Remark}
This is exactly Example~\ref{ex33} when $m=1$.
\end{Remark}
In the Poincare half plane, denote the covariant differentiation
of the hyperbolic metric as $D$. Choose orthonormal frames
$e_1=y\frac{\partial}{\partial x},e_2=y\frac{\partial}{\partial y}$. It is easy to find
\[D_{e_1}e_1=e_2~,~D_{e_1}e_2=-e_1~,~D_{e_2}e_1=D_{e_2}e_2=0.\]
For $\gamma(s)=((x(s),y(s))\subset R^2_+$
let $x'$ denote derivative $\partial x/\partial s$ and so on.
Choose the unit tangent vector
$\alpha=\frac{1}{y}(x'(s)e_1+y'(s)e_2)$ and the unit normal
vector $\beta=\frac{1}{y}(-y'(s)e_1+x'(s)e_2)$.
The geodesic curvature is computed via
\[
\kappa(s)=\langle D_\alpha \alpha,\beta\rangle
=\frac{x'y''-x''y'}{y^2}+\frac{x'}{y}.
\]
After these preparation, we see that the rotational hypersurface
$f(x,y,\theta)=(x,y\theta)$ has differential
$df=(x'ds,y'\theta ds+y d\theta)$ and unit normal vector $\eta=\frac{1}{y}(-y',x'\theta).$
Thus the first and second
fundamental forms of hypersurface $f$ are
\[
I=df\cdot df=y^2(ds^2+I_{S^{n-1}})~,
~ II=-df\cdot d\eta=(y\kappa-x')ds^2-x'I_{S^{n-1}},
\]
where $I_{S^{n-1}}$ is the standard metric of $S^{n-1}(1)$.
Thus principal curvatures are \newline
$\frac{\kappa y-x'}{y^2},\frac{-x'}{y^2},
\cdots,\frac{-x'}{y^2}.$
So $\rho=\frac{\kappa}{y}$, and the M\"{o}bius metric of $f$ is
\[g=\rho^2I=\kappa^2(ds^2+I_{S^{n-1}}).\]
The coefficients of M\"{o}bius invariants are:
\begin{equation}\label{rre14}
\begin{split}
&C_1=-\frac{\kappa_s}{\kappa^2}~,~ C_2=\cdots=C_n=0,\\
&(B_{ij})=\operatorname{diag}\left(\frac{n-1}{n},
\frac{-1}{n},\cdots,\frac{-1}{n}\right),\\
&(A_{ij})=\operatorname{diag}(a_1,a_2,\cdots,a_2),
\end{split}
\end{equation}
where $a_1=\dfrac{\kappa_{ss}}{\kappa^3}
-\dfrac{5}{2}\dfrac{(\kappa_s)^2}{\kappa^4}
-\dfrac{1}{2\kappa^2}+\dfrac{2n-1}{2n^2}~,~
a_2=-\dfrac{1}{2}\left[\dfrac{(\kappa_s)^2}{\kappa^4}
-\dfrac{1}{\kappa^2}+\dfrac{1}{n^2}\right].$
\begin{lemma}\label{metric}
The M\"obius metric of those hypersurfaces
in Examples (\ref{ex1}), (\ref{ex2}) and (\ref{ex3}) are of the warped-product form
\begin{equation}\label{eq-metric}
g=\kappa^2(s)\left(ds^2+I_{-\epsilon}^{n-1}\right),
\end{equation}
where $I_{-\epsilon}^{n-1}$ is the metric of $n-1$ dimensional
space form of constant curvature $-\epsilon$.
This metric \eqref{eq-metric} is of constant sectional
curvature $c$ if, and only if, the function $\kappa(s)$ satisfies
\begin{equation}\label{spiral}
\left[\frac{d}{ds}\frac{1}{\kappa}\right]^2
+\epsilon\left[\frac{1}{\kappa}\right]^2=-c.
\end{equation}
\end{lemma}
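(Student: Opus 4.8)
The plan is to verify the two assertions of Lemma~\ref{metric} by direct computation, treating them separately. For the first assertion, I would simply collect the three M\"obius metrics already computed in Examples~\ref{ex1}, \ref{ex2}, \ref{ex3}: in the cylinder case $g=\kappa^2(ds^2+I_{R^{n-1}})$, in the cone case $g=\kappa^2(ds^2+I_{H^{n-1}})$, and in the rotational case $g=\kappa^2(ds^2+I_{S^{n-1}})$. These are precisely \eqref{eq-metric} with $\epsilon=0$, $\epsilon=1$, $\epsilon=-1$ respectively (so that the fiber has constant curvature $-\epsilon$ equal to $0$, $-1$, $1$). So the first claim is immediate from the preceding examples; no new work is needed beyond pointing this out.

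For the second assertion, the task is to compute the sectional curvatures of the warped product $g=\kappa^2(s)(ds^2+I^{n-1}_{-\epsilon})$ and impose that they all equal a constant $c$. First I would introduce the arc-length variable $t$ of $g$ along the $s$-direction, so that $g=dt^2+\varphi(t)^2\,\tilde g$ where $\tilde g=I^{n-1}_{-\epsilon}$ has constant curvature $-\epsilon$, and the warping function $\varphi$ is related to $\kappa$ by $dt=\kappa\,ds$ and $\varphi=\kappa$ (viewed as a function of $t$). The standard warped-product curvature formulas then give: the mixed sectional curvatures (a plane spanned by $\partial_t$ and a fiber direction) equal $-\varphi''/\varphi$, while the purely fiber sectional curvatures equal $(-\epsilon-(\varphi')^2)/\varphi^2$. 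Setting both equal to $c$ yields the single ODE $(\varphi')^2+\epsilon=-c\varphi^2$ together with $\varphi''=-c\varphi$; one checks the second is a differential consequence of the first, so the first ODE alone is the condition. Finally I would translate $(\varphi')^2+\epsilon=-c\varphi^2$ back into the $s$-variable: since $\varphi=\kappa$ and $\varphi'=d\kappa/dt=(d\kappa/ds)/\kappa=-\kappa^2\,\frac{d}{ds}(1/\kappa)$, substitution gives $\kappa^4\big(\tfrac{d}{ds}\tfrac1\kappa\big)^2+\epsilon=-c\kappa^2$, i.e. dividing by $\kappa^2$ (nonzero since $f$ is umbilic-free, $\rho=\kappa\neq0$),
\[
\left[\frac{d}{ds}\frac1\kappa\right]^2+\epsilon\left[\frac1\kappa\right]^2=-c,
\]
which is exactly \eqref{spiral}.

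The computation is essentially routine once the warped-product setup is in place, so there is no serious obstacle; the only point requiring a little care is the change of variables $s\leftrightarrow t$ and keeping track of the two distinct families of sectional curvatures (mixed versus fiber) so that the constancy condition is correctly extracted as the single first-order ODE rather than an overdetermined pair. An alternative, slightly more computational route that avoids introducing $t$ is to use the M\"obius invariants $(A_{ij})=\operatorname{diag}(a_1,a_2,\dots,a_2)$ and $(B_{ij})=\operatorname{diag}(\tfrac{n-1}{n},-\tfrac1n,\dots)$ already recorded in \eqref{rre12}--\eqref{rre14} and plug them into the Gauss equation \eqref{equa4}; demanding $R_{ijkl}=c(\delta_{ik}\delta_{jl}-\delta_{il}\delta_{jk})$ then forces relations among $a_1,a_2$ and the entries of $B$ that reduce, after simplification, to \eqref{spiral}. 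I would present the warped-product argument as the main line since it is dimension-independent and conceptually cleaner, and remark that the two approaches agree.
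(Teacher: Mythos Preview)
Your approach is sound; in fact the paper omits the proof entirely, writing only ``The proof is an easy exercise and we omit it at here.'' So your warped-product computation is exactly the kind of argument the authors had in mind, and the alternative route via the tabulated $(A_{ij})$, $(B_{ij})$ and the Gauss equation \eqref{equa4} would also work.

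There is, however, a minor arithmetic slip in your change of variables. You correctly have $\varphi'=d\kappa/dt=(d\kappa/ds)/\kappa=\kappa_s/\kappa$, but the subsequent identification with $-\kappa^2\frac{d}{ds}(1/\kappa)$ is off by a factor of $\kappa$: in fact $-\kappa^2\frac{d}{ds}(1/\kappa)=\kappa_s$, so the correct identity is $\varphi'=-\kappa\,\frac{d}{ds}(1/\kappa)$. Consequently $(\varphi')^2=\kappa^2\big[\tfrac{d}{ds}(1/\kappa)\big]^2$ (not $\kappa^4$), and the fiber equation $(\varphi')^2+\epsilon=-c\kappa^2$ becomes
\[
\kappa^2\left[\frac{d}{ds}\frac{1}{\kappa}\right]^2+\epsilon=-c\kappa^2;
\]
dividing by $\kappa^2$ then yields \eqref{spiral} as claimed. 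With this correction everything goes through. Your observation that the mixed-curvature equation $\varphi''=-c\varphi$ is a differential consequence of the fiber equation is correct and worth keeping, since it confirms that \eqref{spiral} alone is equivalent to constant sectional curvature $c$.
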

The proof is an easy exercise and we omit it at here.
\begin{Definition}
We call a curve $\gamma$ \emph{the curvature-spiral} in a $2-$dimensional space form $N^2(\epsilon)=R^2,S^2,H^2$ (of Gauss curvature $\epsilon=0,1,-1$
respectively),
 if its geodesic curvature $\kappa(s)$ is not constant and
 satisfies \eqref{spiral}.
\end{Definition}
 Note that \eqref{spiral} is
 equivalent to the harmonic oscillator equation for the function $\kappa(s)$:
\[
(1/\kappa)''+\epsilon/\kappa=0.
\]
It is easy to see that for fixed $\epsilon, c$ the
solution curve is unique (because $N^2(\epsilon)$ is a
two-point homogeneous space). In particular, when $\epsilon=0$, $N^2(\epsilon)=R^2$,
the corresponding $\gamma$ is a circle or a logarithmic spiral,
 and the cylinder $\gamma\times R^{n-1}$
is called \emph{the circular cylinder} and
\emph{the spiral cylinder} \cite{sulanke}, respectively.

\begin{Theorem}[\cite{guo}]\label{them1}
Let $f:M^n\rightarrow R^{n+1}$ $(n\geq 3)$ be an umbilic free immersed
hypersurface with constant M\"{o}bius curvature $c$. If $n=3$
we assume that $f$ has two distinct principal curvatures.
Then locally $f$ is M\"{o}bius equivalent to one of the following examples:\\
$(i)$ the circular cylinder (where $c=0$) or the spiral cylinder
(where $c<0$);\\
$(ii)$ a cone over a curvature-spiral in a $2$-sphere (where $c<0$); \\
$(iii)$ a rotation hypersurface over a curvature-spiral in a
hyperbolic $2$-plane (the constant curvature $c$ could be positive, negative or zero).
\end{Theorem}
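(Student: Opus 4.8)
The plan is to prove Theorem~\ref{them1} by combining the local normal form of the M\"obius metric computed in Examples~\ref{ex1}--\ref{ex3} with the Reduction Theorem~\ref{retheorem2} of Section~5. First I would set up the structure-theoretic picture: since $f$ is umbilic free with constant M\"obius curvature $c$ and (when $n=3$) exactly two distinct principal curvatures, the M\"obius principal curvatures split as one of multiplicity $1$ and one of multiplicity $n-1$; otherwise the $n-2$ case forces many non-zero entries and the Gauss equation \eqref{equa4} together with the constant-curvature hypothesis would be over-determined, which I would rule out first. So $(B_{ij})$ is (up to sign and ordering) $\operatorname{diag}(\tfrac{n-1}{n},-\tfrac{1}{n},\dots,-\tfrac{1}{n})$ exactly as in \eqref{rre12}--\eqref{rre14}, using the trace and norm constraints \eqref{equa6}.

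Next I would analyze the geometry of $(M^n,g)$ directly. Let $E_1$ be the unit principal direction for the simple M\"obius principal curvature and $E_2,\dots,E_n$ span the $(n-1)$-dimensional distribution. Feeding the diagonal form of $\mathbf{B}$ into the Ricci equation \eqref{equa5} and the Codazzi-type equation \eqref{equa3} and using constancy of the sectional curvature, I would show: the distribution $\operatorname{span}\{E_2,\dots,E_n\}$ is integrable with totally umbilic (in fact totally geodesic after a suitable warping) leaves, the orthogonal distribution $\operatorname{span}\{E_1\}$ is also integrable, and $g$ is locally a warped product $g = ds^2\cdot(\text{something}) = \kappa^2(s)(ds^2 + I^{n-1}_{-\epsilon})$ as in Lemma~\ref{metric}, where the integer sign $\epsilon\in\{0,1,-1\}$ arises as the curvature of the leaf metric determined by the Blaschke tensor eigenvalue $a_2$. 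The constant-curvature condition then becomes exactly the ODE \eqref{spiral}, i.e.\ the harmonic-oscillator equation $(1/\kappa)''+\epsilon/\kappa=0$, whose solution $\kappa(s)$ is (for fixed $\epsilon,c$) unique up to the isometries of $N^2(\epsilon)$; this produces the curvature-spiral, and the trichotomy on $\epsilon$ matches cases $(i),(ii),(iii)$ with the stated sign constraints on $c$ (from \eqref{spiral}: $-c = (1/\kappa')^2 + \epsilon/\kappa^2$, so $\epsilon=0\Rightarrow c\le 0$, etc.).

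Having pinned down $g$, $\mathbf{B}$, $\mathbf{A}$ and $\Phi$ abstractly, the final step is to realize $f$ as one of the three explicit examples. Here I invoke the Reduction Theorem~\ref{retheorem2}: the M\"obius invariants I have computed (the parallel rank-one structure of $\mathbf{B}-(\text{umbilic part})$, the vanishing of $C_2=\dots=C_n=0$, and the block form of $\mathbf{A}$) are precisely the hypotheses that force $f$ to be reducible, i.e.\ M\"obius equivalent to a cylinder, cone, or rotational hypersurface over a curve in $R^2$, $S^2$, or $H^2$ respectively. Comparison of the M\"obius metric \eqref{eq-metric} with \eqref{rre12}, \eqref{rre13}, \eqref{rre14} identifies $s$ with arc length and $\kappa(s)$ with the geodesic curvature of the profile curve $\gamma$, and the ODE \eqref{spiral} says $\gamma$ is a curvature-spiral. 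Finally, the M\"obius congruence theorem (Theorem~\ref{fundthe}) guarantees that two hypersurfaces with the same $g$ and $\mathbf{B}$ in this normal form are M\"obius equivalent, giving local uniqueness within each case.

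The main obstacle I expect is the bookkeeping in the second step: correctly deriving the warped-product splitting of $g$ and the precise value of the leaf curvature $-\epsilon$ from the structure equations \eqref{equa3}--\eqref{equa6} under the constant-curvature hypothesis, and being careful that the three candidate ambient geometries ($R^2$, $S^2$, $H^2$) are distinguished only through the Blaschke tensor component $a_2$ (not through $g$ and $\mathbf{B}$ alone, which look identical in all three cases up to the value of $\epsilon$). A secondary technical point is verifying that the Reduction Theorem's hypotheses are met \emph{globally} enough on the relevant neighborhood, and handling the degenerate sub-case where $\kappa$ is constant (excluded by the definition of curvature-spiral, but one should note it yields the M\"obius isoparametric examples, consistent with but outside the statement).
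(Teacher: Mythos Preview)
Your proposal is correct but takes a longer route than the paper. The paper's proof is essentially two lines: since $g$ has constant sectional curvature, the off-diagonal Ricci components vanish, $R_{ij}=0$ for $i\ne j$; plugging the diagonal $(B_{ij})$ into \eqref{equa5} then gives $(n-2)A_{ij}=0$ for $i\ne j$, so $(A_{ij})$ is diagonal in the same frame. By the equivalence in Theorem~\ref{retheorem2} this means $\Phi$ is closed and $f$ is reducible; Lemma~\ref{metric} then identifies the profile curve as a curvature-spiral. Your second step --- deriving the warped-product form of $g$, the leaf curvature $-\epsilon$, and the ODE \eqref{spiral} directly from the structure equations before invoking the Reduction Theorem --- is redundant: once reducibility is established, the metric form \eqref{eq-metric} and the ODE come for free from the explicit computations in Examples~\ref{ex1}--\ref{ex3} and Lemma~\ref{metric}. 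Your approach buys a more self-contained intrinsic picture of $(M,g)$, but at the cost of recomputing what the Reduction Theorem already packages.

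One minor point: your argument for why the multiplicities must be $1$ and $n-1$ (``over-determined Gauss equation'') is vague. The clean reason, used in the paper (Remark~\ref{rek2}), is that constant M\"obius curvature implies conformal flatness, and a conformally flat hypersurface in dimension $n\ge 4$ automatically has a principal curvature of multiplicity $n-1$; for $n=3$ it is assumed.
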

\begin{proof}
Choose an orthonormal frame with respect to $g$ so that
$(B_{ij})$ is diagonal. According to the following Remark \ref{rek2}, $f$ has
two distinct principal curvatures, one of which is simple.
The assumption of constant curvature for $g$ implies the
Ricci curvature $R_{ij}=0$ for $i\ne j$.
From the integrability equation \eqref{equa5} we deduce that
$(A_{ij})$ is also diagonal.
Thus the second reduction theorem~\ref{retheorem2}
in the next section says that the M\"{o}bius form is closed
and $f$ is reducible.
Invoking Lemma~\ref{metric} we finish the proof.
\end{proof}
\begin{Remark}\label{rek2}
Clearly hypersurfaces with constant M\"{o}bius curvature
are conformally flat. Equivalently, when the dimension $n\geq 4$
there must be a principal curvature of multiplicity $n-1$
everywhere (and the hypersurface is the envelop of
a one-parameter family of $(n-1)$ dimensional spheres).

On the other hand, a $3$-dimensional hypersurface
$f:M^3\rightarrow R^4$ with constant M\"{o}bius sectional
curvature may have three distinct principal curvatures.
We have finished a classification of such examples which will be
published later \cite{lmw}.
\end{Remark}
Let's see for fixed $c$ how many different (global)
examples exist.
If $\epsilon=0,~N^2(\epsilon)=R^2$, without loss of generality
the solution to \eqref{spiral} is written as
\begin{equation}\label{spherical1}
\kappa=1/(\sqrt{-c}s) .~~~~~~~~~~~~~~~~\text{(logarithmic-spiral)}
\end{equation}
When $\epsilon=1,~N^2(\epsilon)=S^2$, without loss of generality
the solution to \eqref{spiral} is written as
\begin{equation}\label{spherical}
\kappa=1/(\sqrt{-c}\sin s) .~~~~~~~~~~~~~~~~\text{(sin-spiral)}
\end{equation}
When $\epsilon=-1,~N^2(\epsilon)=H^2(-1)$, there are three different
possibilities:
\begin{eqnarray}
\kappa=1/(\sqrt{-c}\sinh s),~~~~~~~~&&\text{(sinh-spiral)}\label{sinh}\\
\kappa=1/(\sqrt{c}\cosh s),~~~~~~~~~~&&\text{(cosh-spiral)}\label{cosh}\\
\kappa=e^s.~~~~~~~~~~~~~~~~~~&&\text{(exp-spiral)}\label{exp}
\end{eqnarray}
When $c>0$ we have a unique example (cosh-spiral).
Yet this example is not homogeneous and should not be viewed
as M\"obius rigid according to Remark~\ref{rem4}.

In contrast, for hypersurfaces of M\"obius curvature $c<0$
we have three non-congruent hypersurfaces:
the spiral cylinder, the cone hypersurface, and the rotational hypersurface over
the sinh-spiral. We conclude that either of them
(in particular, the spiral cylinder) is M\"obius deformable.
(See Remark~\ref{rem3} and \ref{rem4}.)

When $c=0$, according to our theorem, there exist two
non-congruent examples: the circular cylinder and
the rotational hypersurfaces over the exp-spiral as
in equation~\eqref{exp}. So either of them is deformable.

\section{The Reduction Theorem}
In this section we establish a criterion in terms of
M\"obius invariants for a hypersurface to be cylinders,
cones and rotational hypersurfaces (Examples
(\ref{ex31})(\ref{ex32})(\ref{ex33})). This is used in the
previous and the final section.
\begin{Theorem}[Reduction Theorem]\label{retheorem}
Let $f:M^n\rightarrow R^{n+1} (n\geq 3)$ be an umbilic free immersed hypersurface, whose principal curvatures have constant multiplicities.
We diagonalize the M\"obius second fundamental form
under an orthonormal frame $\{E_1,E_2,\cdots,E_n\}$
with respect to the M\"obius metric $g$:
\[
B_{ij}=\mathrm{diag}\{\lambda_1,\cdots,\lambda_m,\mu,\cdots,\mu\}.
\]
Assume:

$(1)$~$\lambda_1,\cdots,\lambda_m$ are distinct from $\mu$.

$(2)$~$2\le m\le n-2$. (So the multiplicity of $\mu$ is $n-m$ and $2\le n-m\le n-2$.)

$(3)$~$B_{pq,\alpha}=0,~C_{\alpha}=0,
~~ ~1\le p,q \le m,~m+1\leq \alpha\leq n.$

Then $f$ is M\"{o}bius congruent to one of the examples (\ref{ex31}),(\ref{ex32}) and (\ref{ex33}).
\end{Theorem}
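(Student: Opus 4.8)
The plan is to show that the hypothesis $(3)$ forces the curvature sphere congruence $\xi_{m+1}=\cdots=\xi_n=\mu Y+\xi$ associated to the principal curvature $\mu$ to span, together with its derivatives, a fixed linear subspace of $R^{n+3}_1$, and then to read off the classical construction from the causal type of the orthogonal complement of that subspace. First I would set up notation: write indices $1\le p,q,r\le m$ for the ``spherical'' directions and $m+1\le\alpha,\beta,\gamma\le n$ for the ``flat'' directions of multiplicity $n-m$. Using the structure equations and the covariant-derivative definitions of $B_{ij,k}$ and $C_i$ from Section~2, I would translate $(3)$ into statements about the connection forms: since $B_{pq,\alpha}=0$ and $B_{\alpha\beta}=\mu\delta_{\alpha\beta}$ with $\mu$ a (possibly non-constant) function, the Codazzi-type equation \eqref{equa3} and the definition of $B_{ij,k}$ give $E_\alpha(\mu)=0$ for the relevant components and force $\omega_{p\alpha}$ to be a combination of the $\omega_\beta$ only; more precisely the $(n-m)$-dimensional distribution $\mathcal{D}_\mu=\mathrm{span}\{E_{m+1},\dots,E_n\}$ is \emph{spherical} (totally umbilic and with parallel mean curvature vector inside $M$), while its orthogonal complement $\mathcal{D}^\perp$ is integrable with totally geodesic leaves. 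This is the differential-geometric heart of the argument.

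Next I would pass to the light-cone picture. Consider the curvature sphere map $\zeta:=\mu Y+\xi$, which is the common curvature sphere for all directions in $\mathcal{D}_\mu$. From the structure equations, $d\zeta = (d\mu)Y + \mu\,dY + d\xi$, and a direct computation using $dN,dY_i,d\xi$ together with $C_\alpha=0$, $B_{pq,\alpha}=0$ shows that $E_\alpha(\zeta)\in\mathrm{span}\{\zeta\}$ and $E_\alpha(Y)=Y_\alpha$ with the $Y_\alpha$ themselves closing up modulo $\{Y,\zeta\}$ along $\mathcal{D}_\mu$; hence the subbundle $V:=\mathrm{span}\{Y,\zeta,Y_{m+1},\dots,Y_n,N\}$ — of rank $n-m+3$ — is \emph{parallel} along the leaves of $\mathcal{D}_\mu$, and moreover one checks that differentiation in the $\mathcal{D}^\perp$-directions $E_p$ also preserves the larger fixed subspace spanned by $V$ and $\{\xi_1,\dots,\xi_m\}$ is not what we want; rather, the cleaner statement is that $V^\perp$ (a fixed subspace of dimension $m$) is \emph{constant}, i.e. $d$ of any vector in $V^\perp$ stays in $V^\perp$. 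Here I expect the main obstacle: carefully verifying that $V^\perp$ is a constant subspace of $R^{n+3}_1$ requires checking that the derivatives of $Y,N,\xi$ and the $Y_\alpha$ in \emph{all} directions (both $E_p$ and $E_\alpha$) stay inside $V$; the $E_p$-derivatives involve $A_{p\alpha}$, $\omega_{p\alpha}$ and $B_{p\alpha}=0$, and one must use \eqref{equa1}, \eqref{equa5} and the spherical/geodesic splitting established above to kill the unwanted components. This is exactly the kind of bookkeeping where a sign error or a missing term would break the proof.

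Once $V^\perp\cong R^{m}_{*}$ is a fixed subspace, its induced inner product from $R^{n+3}_1$ is nondegenerate and has one of three causal types, and correspondingly $V$ is a fixed subspace of signature $(1,n-m+1)$, $(1,n-m)$ plus a null direction, or $(0,n-m+2)$ — giving the three model cases. Intersecting the light cone of $R^{n+3}_1$ with $V$ realizes $Y$ as (the M\"obius position vector of) a hypersurface lying in a fixed $(n-m+1)$-sphere, or in the boundary configuration of a fixed affine subspace, or in a fixed hyperbolic $(n-m+1)$-space; equivalently, after a suitable M\"obius transformation $T\in O(n+2,1)$ one arranges that $f(M)$ is contained in $N^{n-m+1}(c)\times R^{m-?}$ in the form described in Examples~\ref{ex31},~\ref{ex32},~\ref{ex33}. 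Concretely: if $V^\perp$ is spacelike one gets the cylinder over $u:L^m\to R^{m+1}$; if $V^\perp$ contains a null line one gets the cone over $u:L^m\to S^{m+1}$; if $V^\perp$ is Lorentzian one gets the rotational hypersurface over $u:L^m\to R^{m+1}_+$. Finally I would invoke the M\"obius congruence Theorem~\ref{fundthe} to conclude that $f$ is M\"obius congruent — not merely ``shaped like'' — one of these three examples, and note that the hypothesis $2\le m\le n-2$ guarantees both factors are genuinely present so the construction is non-degenerate. The remaining verifications (that the surface $u$ produced this way is exactly the profile and that $g$ matches formula~\eqref{eq-g}) are routine and can be relegated to a short computation.
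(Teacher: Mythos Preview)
Your overall strategy --- find a parallel (constant) linear subspace of $R^{n+3}_1$ and classify by its causal type --- is exactly the paper's approach. The gap is that you have chosen the wrong subspace, and the verification you anticipate as ``bookkeeping'' will in fact fail.

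Your $V=\mathrm{span}\{Y,N,\zeta,Y_{m+1},\dots,Y_n\}$ has orthogonal complement $V^\perp=\mathrm{span}\{Y_1,\dots,Y_m\}$, which is always spacelike and therefore cannot distinguish the three cases. More seriously, neither $V$ nor $V^\perp$ is parallel: from $\omega_{p\alpha}=\dfrac{B_{p\alpha,\alpha}}{\lambda_p-\mu}\,\omega_\alpha$ one computes
\[
E_\alpha(Y_\alpha)\equiv -\Big(A_{\alpha\alpha}Y+N+\sum_{p}\tfrac{B_{p\alpha,\alpha}}{\lambda_p-\mu}Y_p-\mu\xi\Big)\pmod{Y_\gamma},
\]
which has nonzero $Y_p$-components, so $E_\alpha(Y_\alpha)\notin V$; dually $E_p(Y_p)$ has an $N$-component, so $V^\perp$ is not preserved either. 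The point is precisely that one must \emph{discover} the vector
\[
T:=A_{\alpha\alpha}Y+N+\sum_{p}\frac{B_{p\alpha,\alpha}}{\lambda_p-\mu}Y_p-\mu\xi
\]
(independent of $\alpha$) and prove that the smaller subspace $V_2:=\mathrm{span}\{T,Y_{m+1},\dots,Y_n\}$, of dimension $n-m+1$, is parallel along all of $M$. The case split is then governed by $Q:=\langle T,T\rangle$, which is shown to be identically zero, negative, or positive on $M$; these yield the cylinder, cone, and rotational hypersurface respectively. Your identification of the cases with the causal type of ``$V^\perp$'' is therefore also off: it is the sign of $Q$ on $V_2$ (degenerate, Lorentzian, spacelike) that decides.

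Two further points. First, the vanishing $A_{q\alpha}=0$ that you need is not a consequence of \eqref{equa1} or \eqref{equa5} alone; in the paper it comes from computing $R_{p\alpha pq}=0$ via $d\omega_{p\alpha}$ and then invoking \eqref{equa4}, and this step is exactly where the hypothesis $m\ge 2$ enters (one needs two distinct indices $p\neq q$). Second, verifying $E_i(T)\in V_2$ for all $i$ is where $n-m\ge 2$ is used repeatedly: one picks $\alpha\neq i$ and writes $T\equiv -E_\alpha(Y_\alpha)\pmod{Y_\gamma}$ to reduce $E_i(T)$ to a commutator computation. So the dimension bounds in hypothesis~$(2)$ are not just nondegeneracy conditions on the factors; they are genuinely used in the proof that $V_2$ is constant.
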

\begin{proof}
Let $\{Y,N,Y_1,\cdots,Y_n,\xi\}$ be a moving frame in
$R^{n+3}_1$ (see Section 2). In the proof below we adopt
the convention on the range of indices as below:
\[
1 \le p,q,r,s,t \le m,~~
m+1 \le \alpha,\beta,\gamma \le n,~~
1 \le i,j,k,l\le n.
\]
Without loss of generality we make a new choice of frame vectors such that
\begin{equation}\label{redua1}
A_{\alpha\beta}=a_{\alpha}\delta_{\alpha\beta}.
\end{equation}
Applying
$dB_{ij}+\sum_kB_{kj}\omega_{ki}+\sum_kB_{ik}\omega_{kj}=\sum_kB_{ij,k}\omega_k$
for off-diagonal element $B_{\alpha\beta}$ ($\alpha\ne\beta$) and
using the fact $B_{\alpha\alpha}=B_{\beta\beta}=\mu,
B_{\alpha\beta}=0$ we get
\begin{equation}\label{5b1}
B_{\alpha\beta,k}=0=B_{k\alpha,\beta},
~~\forall~\alpha\neq\beta,~1\le
k\le n.
\end{equation}
The second equality is by the integrability equation. Since $n-m\ge
2$, we can always choose indices $\alpha\ne\beta$. Then by
integrability equation and the assumption $C_{\beta}=0$ one has
\begin{equation}\label{5b2}
E_{\beta}(\mu)=B_{\alpha\alpha,\beta}=B_{\alpha\beta,\alpha}
+\delta_{\alpha\alpha}C_{\beta}-\delta_{\alpha\beta}C_{\alpha}=C_{\beta}=0,~~\forall
\beta.
\end{equation}
Here $B_{\alpha\beta,\alpha}=0$ due to \eqref{5b1}. Similarly we
have $B_{p\alpha,q}=B_{pq,\alpha}
+\delta_{p\alpha}C_{q}-\delta_{pq}C_{\alpha}=B_{pq,\alpha}$ and
$B_{p\alpha,\alpha}=B_{\alpha\alpha,p}-C_{p} =E_{p}(\mu)-C_{p}$.
Together with the assumption $B_{pq,\alpha}=0$ we summarize that
\begin{equation}\label{5b3}
B_{pq,\alpha}=B_{p\alpha,q}=0,~B_{p\alpha,\alpha}
=E_{p}(\mu)-C_{p},~~\forall~p,q,\alpha.
\end{equation}
Now with the help of \eqref{5b1} and \eqref{5b3} we
compute the covariant derivatives of off-diagonal components
$B_{p\alpha}$ and find
\begin{equation}\label{5b4}
\omega_{p\alpha}=\frac{B_{p\alpha,\alpha}}{\lambda_p-\mu}\omega_{\alpha},
~~\forall~p,\alpha.
\end{equation}
Differentiating once more we obtain the curvature tensor.
Compare the coefficient of the component
$\omega_p\wedge\omega_q$ for any given $p\ne q$
We find that
\[R_{p\alpha pq}=0.\]
(This is the only place where we use the assumption $m\ge 2$, to guarantee that there exist such $p\ne q$).
From the integrability equation~\eqref{equa4} we get
\begin{equation}\label{5b12}
A_{q\alpha}=0~,~~~1\le q\le m,m+1\le \alpha\le n.
\end{equation}
Similarly by comparing the component $\omega_p\wedge\omega_{\alpha}$
we observe that $R_{p\alpha p\alpha}$ is independent of $\alpha$ (here we use
\eqref{5b3}). Equation~\eqref{equa4} yields
$R_{p\alpha p\alpha}=\lambda_p\mu+A_{pp}+A_{\alpha\alpha}$ and
\begin{equation}\label{5a1}
A_{\alpha\alpha}=a,~
\forall ~\alpha~.
\end{equation}

Next we compute the covariant derivatives of tensor $A$ and $C$. By the condition
$C_{\alpha}=0$ and the integrability equation \eqref{equa1}
$A_{ij,k}-A_{ik,j}=B_{ik}C_{j}-B_{ij}C_{k}$,
\begin{equation}\label{5b13}
E_{\alpha}(a)=E_{\alpha}(A_{\beta\beta})=A_{\beta\beta,\alpha}
=A_{\alpha\beta,\beta}=0,~\forall~\alpha\ne\beta.
\end{equation}
As a consequence of \eqref{5b4}
and $dC_i+\sum_kC_k\omega_{ki}=\sum_kC_{i,k}\omega_k$ we get that
\begin{equation}\label{5b14}
E_{\alpha}(C_p)=C_{p,\alpha}=C_{\alpha,p}=0,~\forall~p,\alpha.
\end{equation}

Let's look at the geometric meaning of these results.
From the formula in \eqref{5b4} we know that distributions
\[
D_1\triangleq\mathrm{Span}\{E_p|1\le p\le
m\},~~D_2\triangleq\mathrm{Span}\{E_{\alpha}|m+1\le \alpha\le n\},
\]
are integrable. Any integral submanifold of distribution $D_1$ is a
$m$-dimensional submanifold. On the other hand, along any integral
submanifold of $D_2$ the hypersurface $Y$ is tangent to
\begin{equation}
F\triangleq\mu Y+\xi,
\end{equation}
the principal curvature sphere of multiplicity $n-m$. Using \eqref{5b2}, $E_p(\mu)=B_{\alpha\alpha,p}=B_{p\alpha,\alpha}+C_{p}$ and the
structure equation it is easy to get that
\begin{equation}\label{epf}
E_{\alpha}(F)=0,~E_p(F)=B_{p\alpha,\alpha}Y+(\mu-\lambda_p)Y_p.
\end{equation}
Then principal curvature sphere $F$ induces a $m$-dimensional
submanifold in the de-Sitter space $S^{n+2}_1$
\[
F:\widetilde{M}^m=M^n/L\rightarrow S^{n+2}_1,
\]
where fibers $L$ are integral submanifolds of distribution $D_2$. In
other words, $F$ form a $m$-parameter family of $n$-spheres
enveloped by the hypersurface $Y$.

The next crucial observation is that $F$ is located in a fixed
$(m+2)$-dimensional linear subspace of $R^{n+3}_1$. To show that we
compute the repeated derivatives of $F$, which contains
all information of the envelope $Y$.
Straightforward yet tedious computation shows that the frames of
\begin{equation}\label{eq-v1}
V_1\triangleq\text{Span}\{F,E_1(F),\cdots,E_m(F),P\},
\end{equation}
\[\text{where}~~~~~P\triangleq A_{\alpha\alpha}Y-N+\sum_{p=1}^m\frac{B_{p\alpha,\alpha}}
{(\mu-\lambda_p)^2}E_p(F) +\mu F,\]
satisfy a linear first order PDE system. Hence
these vectors, including $F$ itself, are contained in a fixed
$(m+2)$-dimensional subspace $V_1$ endowed with degenerate, Lorentzian,
or positive definite inner product. This agrees with the geometry of
cylinders, cones, and rotational hypersurfaces (see examples (3.1),(3.3),(3.5)),
where the principal curvature sphere $F$ is orthogonal to a
$(n-m+1)$-parameter family of hyperplanes/hyperspheres.
Moreover, the orthogonal complement $V_1^{\perp}$ of $dim=n-m+1$
contains all $Y_{\alpha},~~ m+1\le\alpha\le n$.

The final fact above inspires us to proceed in an alternative
and easier way. Differentiate any given $Y_{\alpha}$
and modulo components in the subspace
$\text{Span}\{Y_{\gamma},~m+1\le\gamma\le n\}$. By \eqref{redua1}\eqref{5b12}\eqref{5b4} one finds
\begin{eqnarray}
E_i(Y_{\alpha})&=&-A_{\alpha i}Y-\delta_{\alpha i}N
+\sum{_{j}}~\omega_{\alpha j}(E_i)Y_j+B_{\alpha i}\xi \notag \\
&=&\left\{
\begin{array}{ll}
-T~(\text{mod}~ Y_{\gamma}),~\text{when}~i=\alpha~;\\
0~(\text{mod}~ Y_{\gamma}),~~\text{otherwise~.} \label{5t0}\\
\end{array}
\right.
\end{eqnarray}
where
\begin{equation}\label{5t1}
T\triangleq A_{\alpha\alpha}Y+N
+\sum_{p=1}^m\frac{B_{p\alpha,\alpha}}{\lambda_p-\mu}Y_p
-\mu\xi
\end{equation}
is independent of $\alpha$ by \eqref{5b3}\eqref{5a1}. Then we assert that the subspace
\begin{equation}\label{eq-v2}
V_2\triangleq\text{Span}\{T,Y_{\gamma}|m+1\le\gamma\le n\}
\end{equation}
is parallel along $M$. According to our previous computation,
$E_i(Y_{\alpha})=0~(\text{mod}~V_2),~\forall \alpha~.$
So we need only to consider $E_i(T)$. Fix $i$ and
choose $\alpha\ne i$.
(Such $\alpha$ exists by the assumption $n-m\ge 2$,
which is the third and final time that we use it.
Recall that this condition has been used to derive
\eqref{5b2}\eqref{5b13}, i.e. $E_{\alpha}(\mu)=0=E_{\alpha}(a)$.)
Rewrite the first equality of \eqref{5t0} as
\begin{equation}\label{eq-rewrite}
T=-E_{\alpha}(Y_{\alpha})+\sum{_{\gamma}}(\cdots)Y_{\gamma}.
\end{equation}
By this clever choice of index $\alpha$
we may prove in a unified way that
\begin{eqnarray*}
E_i(T)&=&-E_i(E_{\alpha}(Y_{\alpha}))
+\sum{_{\gamma}}(\cdots)E_i(Y_{\gamma})~~(\text{mod}~Y_{\gamma})\\
&=&-E_{\alpha}(E_i(Y_{\alpha}))+[E_{\alpha},E_i](Y_{\alpha})
+\sum{_{\gamma}}(\cdots)E_i(Y_{\gamma})~~(\text{mod}~Y_{\gamma})\\
&=&-E_{\alpha}(\sum{_{\beta}} (\cdots)Y_{\beta}))+[E_{\alpha},E_i](Y_{\alpha})
+\sum{_{\gamma}}(\cdots)E_i(Y_{\gamma})~~(\text{mod}~Y_{\gamma})\\
&=&0~~(\text{mod}~V_2).
\end{eqnarray*}
This verifies our previous assertion. More precisely, we have
\begin{equation}\label{5t2}
E_p(T)=\frac{B_{p\alpha,\alpha}}{\lambda_p-\mu}T,~~
E_{\alpha}(T)=QY_{\alpha},~~\forall~p,\alpha
\end{equation}
where
\[
Q\triangleq \langle T,T\rangle=
2A_{\alpha\alpha}+\mu^2+\sum_{p=1}^m\frac{B_{p\alpha,\alpha}^2}
{(\lambda_p-\mu)^2},
\]
satisfies
\begin{equation}\label{5t3}
E_p(Q)=\frac{2B_{p\alpha,\alpha}}{\lambda_p-\mu}Q,\;
E_{\alpha}(Q)=0.
\end{equation}
One could verify \eqref{5t2} directly. But the easy way is using $\langle T,Y_{\alpha}\rangle=0$ and \eqref{5t0} to get
\begin{equation}\label{5t4}
\langle E_i(T),Y_{\alpha}\rangle
=-\langle T,E_i(Y_{\alpha})\rangle
=\left\{
\begin{array}{ll}
Q~,~\text{when}~i=\alpha;\\
0~,~\text{otherwise~.}\\
\end{array}
\right.
\end{equation}
This implies $E_p(T)\parallel T$ for any $1\le p\le m$. Then $E_p(T)$ as in \eqref{5t2} is derived by differentiating \eqref{5t1} and comparing the $\xi$ component with $T$. The formula for $E_p(Q)$ in \eqref{5t3} follows directly.
On the other hand, we know
\[
\langle E_{\alpha}(T),T\rangle=\frac{1}{2}E_{\alpha}(Q)=0,
\]
where we used \eqref{5b3} and its consequence $[E_p,E_{\alpha}]\in D_2$ together with \eqref{5b2}\eqref{5b13}\eqref{5b14}. Combined with \eqref{5t4} we have $E_{\alpha}(T)=QY_{\alpha}$.

Regarding \eqref{5t3} as a linear first-order ODE for $Q$ we see that $Q\equiv 0$ or $Q\neq 0$ on the connected
manifold $M^n$. Thus there are three possibilities for the induced metric on the fixed subspace $V_2\subset \mathbb{R}^{n+3}_1$.

\vspace{2mm}

\noindent{\bf Case 1}, $Q=0$ on $M^n$; $V_2$ is endowed with a degenerate inner product.

In this case, $\langle T,T\rangle=0$. By \eqref{5t2},
$E_p(T)\parallel T$, so $T$ determines a fixed light-like direction in $\mathbb{R}^{n+3}_1$,
which we may take to be
\[
[T]=[1,-1,0,\cdots,0]\in \mathbb{R}^{n+3}_1.
\]
This corresponds to $\infty$, the point at infinity of $\mathbb{R}^{n+1}$. Choose space-like vectors
$X_{m+1},\cdots,X_n$ so that $V_2=\text{Span}\{T,X_{m+1},\cdots,X_n\}$. We interpret the geometry of hypersurface $f:M^n\rightarrow R^{n+1}$
as below:

1)
Any $X_{\alpha}$ determines a hyperplane in $\mathbb{R}^{n+1}$
because $\langle T,X_{\alpha}\rangle=0$;

 2)
$\rm{Span}\{X_{\alpha}, (m+1\le \alpha\le n)\}$ corresponds to a
(n-m)-dimensional plane $\Sigma$ in $\mathbb{R}^{n+1}$.

 3)
$F$ is a $m$-parameter family of hyperplanes orthogonal to
the fixed plane $\Sigma$.

$f(M)$, as the envelope of this family of hyperplanes $F$, is clearly a cylinder over a hypersurface
$\widetilde{M}\subset \mathbb{R}^{m+1}$.

\vspace{2mm}
\noindent{\bf Case 2}, $Q<0$ on $M^n$; $V_2$ is a Lorentz subspace in $\mathbb{R}^{n+3}_1$.

Fix a basis $\{P_0,P_{\infty},X_{m+2},\cdots,X_n\}$
of the $(n-m+1)$-dimensional $V_2$ so that $P_0,P_{\infty}$ are light-like.
Without loss of generality we may assume
\[
P_0=(1,1,0,\cdots,0),~~P_{\infty}=(1,-1,0,\cdots,0).
\]
Using the stereographic projection $\sigma$ they correspond to
the origin $O$ and the point at infinity $\infty$ of the flat
$\mathbb{R}^{n+1}$, respectively.
We interpret $F$ and $V_2$ in terms of the geometry of $\mathbb{R}^{n+1}$:

1)
$\rm{Span}\{X_{\alpha}:$ $m+2\le \alpha\le n\}$ corresponds to a coordinate plane
$\mathbb{R}^{n-m-1}\subset\mathbb{R}^{n+1}$, because $X_{\alpha}$ must be space-like and orthogonal to $P_0,P_{\infty}$.

2)
$F$ is a $m$-parameter family of hyperplanes (passing $O$ and $\infty$) and orthogonal to this fixed $\mathbb{R}^{n-m-1}$.

Based on the fact 1), $f(M)$, the envelope of $F$,
is a cylinder over a $(m+1)$-dimensional hypersurface in $\mathbb{R}^{m+2}$ (the orthogonal complement of the previous $\mathbb{R}^{n-m-1}$); moreover, the fact 2) means that $f(M)$ is a cone (with vertex $O$) over a $m$-dimensional hypersurface in $S^{m+1}$.

\vspace{2mm}
\noindent{\bf Case 3}, $Q>0$ on $M^n$; $V_2$ is a space-like subspace.

Without loss of generality we assume that
$P_{\infty}=(1,-1,0,\cdots,0)$ is contained in the orthogonal complement of $V_2$. As before we make the following interpretation:

1)
$V_2$ corresponds to a $m$-dimensional plane $\mathbb{R}^m\subset\mathbb{R}^{n+1}$.

2)
$F$ is a $(n-m)$-parameter family of hyper-spheres orthogonal to
this fixed plane $\mathbb{R}^m$ with centers locating on it. Thus $F$ envelops a rotational hypersurface $f(M)$ (over a hypersurface in half-space $\mathbb{R}_+^{m+1}$).

Sum together we complete the proof to the Reduction Theorem.
\end{proof}

\begin{Remark}
It is noteworthy that we may introduce
\[P\triangleq QY-T\]
which satisfies $\langle P,T\rangle=0,\langle P,Y_{\alpha}\rangle=0,
\langle P,P\rangle=-Q$. So $P\bot V_2$ and $QY=T+P$
is an orthogonal decomposition.
Hence a direct proof for case 2 and 3 is to define
\[
\bar{P}=\frac{P}{\sqrt{|Q|}}~,
~~\theta=\frac{T}{\sqrt{|Q|}}~,
~~\langle \bar{P},\bar{P}\rangle=-\langle\theta,\theta\rangle=\pm 1~.
\]
Either of them gives a map into the sphere or
the hyperbolic space. Then $M^n=L^m\times N^{n-m}$
is mapped to the lightcone of $\mathbb{R}^{n+3}_1$ by
\[
Y=\frac{-1}{\sqrt{|Q|}}(\bar{P},\theta)~\in
\mathbb{R}^{n+3}_1=V_2^{\bot}\oplus V_2
\]
as a warped product of these two maps
($Q$ depends only on the component of $\widetilde{M}^m$).
Clearly such hypersurfaces are cones or rotational hypersurfaces.
\end{Remark}
The construction of cylinders, cones and rotational
hypersurfaces exists for any index $1\le m \le n-1$.
From this viewpoint the condition $(2)$ that $2\le n-m\le n-2$
in our Reduction Theorem~\ref{retheorem} is unsatisfying,
not only conceptually, but also in that it limits the
possible application.

Upon closer examination we find that when $m=n-1$
(the M\"obius principal curvature $\mu$ is simple)
one could not find a satisfying version of the
Reduction Theorem. In particular it seems unavoidable to assume
that $\lambda_1,\cdots,\lambda_{n-1}$ (and $\mu$) be distinct
(which seems to be a quite unnatural condition),
so that we can derive
\[
\omega_{pq}=\sum_{r=1}^{n-1}
\frac{B_{pq,r}}{\lambda_p-\lambda_q}\omega_r
\]
(similar to \eqref{5b4}) and use it to compute $E_i(T)$.
(As pointed out before \eqref{eq-rewrite}
in our previous proof of Theorem~\ref{retheorem},
the condition $m\le n-2$ has been used several times,
in particular to show
$E_i(T)=0(\text{mod}~V_2)$ before \eqref{5t2}.)
It seems preferable to verify whether the subspace
$V_1$ or $V_2$ defined in \eqref{eq-v1}\eqref{eq-v2}
is invariant or not
when the Reduction Theorem could not apply directly.

On the other hand, our Reduction Theorem can be generalized to
the case $m=1$ with some modification on the assumptions.
\begin{Theorem}\label{retheorem2}
Let $f: M^n\rightarrow R^{n+1}~~(n\geq 3)$ be a hypersurface in
 $(n+1)-$dimensional Euclidean space with a principal curvature
of multiplicity $n-1$. Below are equivalent:

$(1)$ $f$ is M\"{o}bius congruent to
a cylinder, or a cone, or a rotation hypersurface over
a curve $\gamma\subset N^2(\epsilon)$.

$(2)$ The M\"{o}bius form $\Phi=\sum_i C_i\omega_i$ of $f$ is closed.
\end{Theorem}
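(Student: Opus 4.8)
The statement is an equivalence, so I would prove the two implications separately; the content is in $(2)\Rightarrow(1)$.

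For the implication $(1)\Rightarrow(2)$, note first that closedness of the M\"obius form is a M\"obius invariant, so it is enough to verify $d\Phi=0$ on the models of Examples~\ref{ex1}, \ref{ex2}, \ref{ex3}. For each of the three the computations \eqref{rre12}, \eqref{rre13}, \eqref{rre14} give, in the principal coframe, $\Phi=C_1\omega_1$ with $C_1=-\kappa_s/\kappa^2$; since $g=\kappa^2(ds^2+I^{n-1}_{-\epsilon})$ forces $\omega_1=\kappa\,ds$, this yields $\Phi=-(\kappa_s/\kappa)\,ds=-d(\log\kappa)$, which is exact and in particular closed.

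For the implication $(2)\Rightarrow(1)$ the plan is to run the proof of the Reduction Theorem~\ref{retheorem} in the borderline case $m=1$ (one M\"obius principal curvature of multiplicity $n-1$, which condition $(2)$ of that theorem excluded), using $d\Phi=0$ to produce the structural data that its condition $(3)$ supplied by hypothesis. Diagonalize $B_{ij}=\mathrm{diag}\{\lambda_1,\mu,\dots,\mu\}$ in a principal frame; since $f$ has no umbilics and $\lambda_1+(n-1)\mu=0$, one has $\lambda_1\neq\mu$ and $\mu\neq0$. In this frame \eqref{equa2} reads $C_{i,j}-C_{j,i}=(\lambda_i-\lambda_j)A_{ij}$, so $d\Phi=0$ is equivalent to $A_{1\alpha}=0$ for $m+1\le\alpha\le n$; after a rotation inside the $\mu$-eigenspace we may further assume $A_{\alpha\beta}=a_\alpha\delta_{\alpha\beta}$. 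This is precisely the conclusion \eqref{5b12} which, in the proof of Theorem~\ref{retheorem}, required $m\ge2$. I would then show that $C_\alpha=0$ for $\alpha\ge m+1$, so that in fact $\Phi=C_1\omega_1$; I expect this to follow by combining the contracted forms of the Codazzi equations \eqref{equa1}, \eqref{equa3}, the Ricci relation \eqref{equa5}, the normalizations \eqref{equa6}, and the twice-contracted second Bianchi identity, which together force $\mu\,C_\alpha=0$. Granting $C_\alpha=0$ one obtains $B_{1\alpha,1}=0$ and $\omega_{1\alpha}$ acquires exactly the form \eqref{5b4}, and the remainder of the proof of Theorem~\ref{retheorem} then applies for $m=1$: the only surviving use of the bound $2\le m\le n-2$ is the inequality $n-m\ge2$, i.e.\ our hypothesis $n\ge3$. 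In particular one forms the vector $T$ of \eqref{5t1}, shows the $n$-dimensional subspace $V_2=\mathrm{Span}\{T,Y_{m+1},\dots,Y_n\}$ is parallel along $M^n$, and distinguishes, by the sign of the constant $Q=\langle T,T\rangle$, a cylinder ($Q=0$), a cone ($Q<0$), or a rotational hypersurface ($Q>0$) over the integral curve $\gamma$ of $D_1=\mathrm{Span}\{E_1\}$ in $R^2$, $S^2$, respectively $H^2$.

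The main work, and the step I expect to be most delicate, is the bookkeeping that confirms the passage to $m=1$: every place where the proof of Theorem~\ref{retheorem} invoked $2\le m\le n-2$ must be re-examined --- most notably the curvature computation giving \eqref{5a1} (that $A_{\alpha\alpha}$ is independent of $\alpha$) and the identity $E_i(T)\equiv 0\ (\mathrm{mod}\ V_2)$ --- and shown to depend only on $n\ge3$ together with the two facts $A_{1\alpha}=0$ and $C_\alpha=0$ just obtained. A secondary point is to read off the geometry of the three cases in the lowest dimensions (e.g.\ $n=3$) so that the conclusion is literally a curve in a $2$-dimensional space form.
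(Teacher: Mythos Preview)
Your overall strategy matches the paper's, but you are missing the observation that makes $C_\alpha=0$ immediate and renders your proposed Bianchi-identity route unnecessary. The two normalizations in \eqref{equa6} do more than give $\lambda_1+(n-1)\mu=0$: together with $\lambda_1^2+(n-1)\mu^2=\frac{n-1}{n}$ they force $\lambda_1=\frac{n-1}{n}$ and $\mu=-\frac{1}{n}$ to be specific \emph{constants}. Hence $B_{11,\alpha}=E_\alpha(\lambda_1)=0$, and for any $\alpha\ne\beta$ the Codazzi equation \eqref{equa3} combined with \eqref{5b1} gives
\[
0=E_\beta(\mu)=B_{\alpha\alpha,\beta}=B_{\alpha\beta,\alpha}+C_\beta=C_\beta.
\]
Thus $C_\alpha=0$ and $B_{11,\alpha}=0$ hold in one line, which is exactly condition~(3) of Theorem~\ref{retheorem} with $m=1$. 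The paper then observes that in the proof of Theorem~\ref{retheorem} the bound $m\ge 2$ is used at precisely one place, namely to obtain $A_{q\alpha}=0$ in \eqref{5b12}; since $d\Phi=0$ already gives $A_{1\alpha}=0$, the remainder of that proof applies verbatim. Your concern about re-checking \eqref{5a1} and $E_i(T)\equiv 0\pmod{V_2}$ is therefore settled without extra work: those steps rely only on $n-m\ge 2$, which here is $n\ge 3$.

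For $(1)\Rightarrow(2)$ your explicit computation $\Phi=-d(\log\kappa)$ is fine; the paper instead just notes from \eqref{rre12}--\eqref{rre14} that $(A_{ij})$ and $(B_{ij})$ are simultaneously diagonal, hence commute, which by \eqref{equa2} is equivalent to $d\Phi=0$.
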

\begin{proof}
Write out $\Phi=\sum_iC_i\omega_i$, the coefficient matrices $(B_{ij})$ of the M\"{o}bius second fundamental form and $(A_{ij})$ of the Blaschke tensor under any orthonormal basis
$\{E_1,\cdots,E_n\}$ with respect to the M\"{o}bius metric $g$
and dual basis $\{\omega_1,\cdots,\omega_n\}$. Notice
\begin{equation*}
d\Phi=\sum_idC_i\wedge\omega_i+\sum_iC_id\omega_i
=\sum_{ij}C_{i,j}\omega_j\wedge\omega_i
\end{equation*}
and the integrability equation \eqref{equa2}. Then the following are obviously equivalent:

1) $\Phi$ is a closed 1-form;

2) $C_{i,j}$ define a symmetric tensor;

3) matrices $(B_{ij})$ and $(A_{ij})$ commute;

4)$(B_{ij})$ and $(A_{ij})$ can be diagonalized simultaneously.

Suppose $f$ has a principal curvature
of multiplicity $n-1$ and $\Phi$ is closed.
Then we can choose $\{E_1,\cdots,E_n\}$ such that
\[
(B_{ij})=\text{diag}(\lambda,\mu,\cdots,\mu),~~ (A_{ij})=\text{diag}(a_1,a_2,\cdots,a_n).
\]
We are almost in the same context as in the proof of Theorem~\ref{retheorem} with $m=1$ and here we still assume
$1\leq i,j,k \leq n; 2\leq \alpha,\beta,\gamma \leq n.$
In particular \eqref{5b1} still holds true and we have
$B_{\alpha\beta,\alpha}=0$ for any $\alpha\ne \beta$.

Using \eqref{equa6} we know
$\lambda=\frac{n-1}{n},\mu=\frac{-1}{n}$ identically. Differentiate them. We get
\begin{equation*}
\begin{split}
B_{11,\alpha}&=0,~~\forall\alpha,\\
0=E_{\beta}(\mu)&=B_{\alpha\alpha,\beta}
=B_{\alpha\beta,\alpha}
+\delta_{\alpha\alpha}C_{\beta}-\delta_{\alpha\beta}C_{\alpha}
=C_{\beta},~~\forall \alpha\ne\beta.
\end{split}
\end{equation*}
This looks like \eqref{5b2} and we also
use \eqref{5b1}\eqref{equa3}. But the assumption is different.
Anyway we find that the condition $(3)$ in
the Reduction Theorem~\ref{retheorem} is satisfied.
Although here $m=1$ violates the condition $(2)$, we observe that
$m\ge 2$ is only used only once in that proof to derive
\eqref{5b12}:
\[A_{q\alpha}=0,\]
which is an established fact at here already.
Thus the previous proof to Theorem~\ref{retheorem}
after \eqref{5b12} is still valid. The same argument shows
that $f$ is reducible.

Conversely, if $f$ could be reduced to
Example (\ref{rre12}), (\ref{rre13}), or (\ref{rre14}),
by the computations in the previous section we know that
 $(B_{ij})$ and $(A_{ij})$ can be diagonalized simultaneously,
 thus $C$ is closed.
 This finishes the proof to Theorem~\ref{retheorem2}.
\end{proof}
\begin{Remark}
In \cite{guo1}, Guo and Lin obtained a classification
of hypersurfaces with two distinct principal curvatures
and closed M\"obius form $\Phi$, which included our
Theorem~\ref{retheorem2}.
We give an alternative proof here not only to be self-contained,
but also because this proof looks simpler and unified with
the Reduction Theorem~\ref{retheorem}.
\end{Remark}

\section{Algebraic characteristics of second fundamental forms of deformable hypersurface pairs}
Let $f,\bar{f}: M^n\rightarrow R^{n+1}~~(n\geq 4)$ be two
hypersurfaces without
umbilics. If they induce the same M\"{o}bius metric, i.e., $g=\bar{g}$,
then the M\"{o}bius second fundamental forms $B$ of $f$, and
$\bar{B}$ of $\bar{f}$, have specific algebraic characteristics.
The algebraic result is as below:
\begin{Theorem}\label{th1}
Let $V$ be a $n$-dimensional vector space $(n\geq 4)$, and
$B,\bar{B}: V\times V\rightarrow R$ be two bilinear symmetric
functions. Let $\{e_1,\cdots,e_n\}$ be an orthonormal basis of $V$,
and write $B(e_i,e_j)=B_{ij}, \bar{B}(e_i,e_j)=\bar{B}_{ij}$. Denote
\begin{equation}\label{s0}
\begin{split}
S_{ijkl}=&B_{ik}B_{jl}-B_{il}B_{jk}\\
&+\frac{1}{n-2}\sum_m
\{\delta_{ik}B_{jm}B_{ml}+\delta_{jl}B_{im}B_{mk}
-\delta_{il}B_{jm}B_{mk}-\delta_{jk}B_{im}B_{ml}\}.
\end{split}
\end{equation}
Obviously this defines a tensor $S: V^4\rightarrow R$ associated
with $B$. $\bar{S}$ and $\bar{S}_{ijkl}$ are defined similarly for
$\bar{B}$. Assume $S=\bar{S}$, i.e.
\begin{equation*}
S_{ijkl}=\bar{S}_{ijkl},\;\;\forall\, 1\le i,j,k,l\le n.
\end{equation*}
Then either $B$ and $\bar{B}$ can be diagonalized simultaneously, or
there exists an orthonormal basis $\{e_1,\cdots,e_n\}$ of $V$ such
that
\begin{eqnarray*}
\{\bar{B}_{ij}\} =\text{diag}(\bar{\lambda}_1,\bar{\lambda}_2,\bar{\mu},
\cdots,\bar{\mu}), \{B_{ij}\}=\left(
    \begin{array}{ccccc}
      B_{11} & B_{12}& 0 & \cdots &0\\
      B_{21} &B_{22} & 0 & \cdots & 0 \\
      0 & 0 & \mu& \cdots& 0 \\
      \vdots& \vdots & \vdots & \ddots & \vdots \\
      0 & 0 & 0 & \cdots & \mu\\
    \end{array}
  \right),
\end{eqnarray*}
where $\bar{\lambda}_1\ne \bar{\lambda}_2, \mu=\pm\bar\mu$. In the
last case there exist an eigenvalue of $\bar{B}$ with multiplicity at
least $n-2$.
\end{Theorem}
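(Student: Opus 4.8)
The plan is to treat the condition $S=\bar S$ as an algebraic identity relating the two symmetric operators and extract eigenspace information from it. First I would fix an orthonormal eigenbasis for $\bar B$, say $\bar B e_i = \bar\lambda_i e_i$, and then compute $\bar S_{ijkl}$ in that basis; the off-diagonal terms of $\bar S$ then encode products $\bar\lambda_i\bar\lambda_j$ and Kronecker-delta combinations. The key move is to look at the components $S_{ijkl}=\bar S_{ijkl}$ with distinct indices and with repeated indices separately: the ``Weyl-type'' part $B_{ik}B_{jl}-B_{il}B_{jk}$ together with the Ricci-type contractions $\sum_m B_{jm}B_{ml}$, etc., must match those of $\bar B$. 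Since $\bar B$ is diagonal in the chosen basis, $\sum_m \bar B_{jm}\bar B_{ml}=\bar\lambda_j^2\delta_{jl}$, so for $i,j,k,l$ pairwise distinct one gets $B_{ik}B_{jl}-B_{il}B_{jk}=0$, and for three distinct indices one gets relations forcing many $B_{ij}$ to vanish or forcing eigenvalue coincidences.

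Next I would organize the argument around the multiplicities of the eigenvalues of $\bar B$. If $\bar B$ has at least three distinct eigenvalues, or two eigenvalues each of multiplicity $\ge 2$, I expect the vanishing relations among the $B_{ij}$ (coming from the distinct-index components of $S=\bar S$) to be rigid enough to force $B$ to be diagonal in the same basis, i.e. the two forms diagonalize simultaneously. The genuinely exceptional situation is when $\bar B$ has exactly two eigenvalues with one of them simple, or more precisely when the ``small'' eigenspace structure allows $B$ to rotate nontrivially inside a $2$-plane. Here I would show that $B$ must preserve the large eigenspace of $\bar B$ (on which it acts as a scalar $\mu$ with $\mu^2=\bar\mu^2$, hence $\mu=\pm\bar\mu$, as forced by the Ricci-type contraction on that block), while on the orthogonal $2$-dimensional complement $B$ is an arbitrary symmetric $2\times 2$ matrix $\begin{pmatrix}B_{11}&B_{12}\\ B_{21}&B_{22}\end{pmatrix}$ — this is exactly the stated dichotomy. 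The claim that in this last case $\bar B$ has an eigenvalue of multiplicity $\ge n-2$ is then immediate, since $\bar\mu$ occupies the $(n-2)$-dimensional block.

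The main obstacle I anticipate is the careful bookkeeping needed to rule out intermediate cases — for instance, ensuring that when $\bar B$ has an eigenvalue of multiplicity exactly $n-2$ and two simple eigenvalues $\bar\lambda_1\ne\bar\lambda_2$, the matrix $B$ cannot ``leak'' out of the $2$-plane spanned by $e_1,e_2$ into the $\bar\mu$-eigenspace. This requires exploiting the mixed components $S_{1\alpha 1\beta}$, $S_{1\alpha\beta\alpha}$ (with $\alpha,\beta$ indices in the large block) and the contraction identities to kill $B_{1\alpha}$, $B_{2\alpha}$, $B_{\alpha\beta}$ for $\alpha\ne\beta$ in that block, and to show $B_{\alpha\alpha}$ is independent of $\alpha$. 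I would handle this by first proving $B$ restricted to the large eigenspace is a multiple of the identity (using the $\delta_{ik}$-contraction terms), then proving the off-block entries vanish, and only then analyzing the free $2\times 2$ corner; the dimension hypothesis $n\ge 4$ is what guarantees enough distinct indices are available to run these contraction arguments, and $n=3$ would genuinely fail here (as the authors note in Remark~\ref{rem-dim3}).
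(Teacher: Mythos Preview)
Your case analysis in the second paragraph is wrong, and this is a genuine gap. You assert that if $\bar B$ has at least three distinct eigenvalues then $B$ must be diagonal in the same basis, but the exceptional outcome of the theorem allows $\bar B=\mathrm{diag}(\bar\lambda_1,\bar\lambda_2,\bar\mu,\ldots,\bar\mu)$ with $\bar\lambda_1,\bar\lambda_2,\bar\mu$ pairwise distinct while $B$ carries a nontrivial $2\times 2$ block. You half-acknowledge this in your final paragraph, but that contradicts your earlier division into cases, and you never say what actually separates the rigid situation from the exceptional one. There is also a structural problem with your proposed order: you want to first show $B$ restricted to the large $\bar\mu$-eigenspace is a scalar and only then kill the off-block entries $B_{1\alpha},B_{2\alpha}$, but the contraction terms $\tfrac{1}{n-2}\sum_m B_{jm}B_{ml}$ in $S$ run over \emph{all} $m$, so the equations with indices inside the large block still involve the off-block entries and cannot be decoupled in the way you suggest.

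The paper's route is quite different and sidesteps any multiplicity case analysis. Its key step (Lemma~\ref{lem1}) is: in any basis diagonalizing $\bar B$, if \emph{every} off-diagonal $B_{ij}$ is nonzero then the $\bar\lambda_i$ cannot all be distinct; this is a nontrivial computation from \eqref{s3} and its consequences. One then rotates inside a repeated $\bar B$-eigenspace to create a single zero $B_{ij}=0$. A second lemma (Lemma~\ref{lem2}) upgrades one off-diagonal zero to an entire zero row and column of $B$, using the relations $B_{ij}B_{kl}-B_{il}B_{kj}=0$ for four distinct indices together with \eqref{s3}. Induction on dimension then reduces everything to a residual $2\times 2$ corner, and only at that final stage does one derive $B_{\alpha\alpha}=\mu$ and $\bar\lambda_\alpha=\bar\mu$ for $\alpha\ge 3$ and the relation $\mu=\pm\bar\mu$.
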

To prove Theorem \ref{th1}, we need the following two lemmas.
\begin{lemma}\label{lem1}
Given $n\ge 4$. Assumptions as in Theorem~\ref{th1} except that
$dim(V)=l, 3\le l\le n$. That means we still have the fraction
$\frac{1}{n-2}$ in the expression \eqref{s0}, yet the range of those
indices is from $1$ to $l$. Then we can find an orthonormal basis of
$V$ so that
$\{\bar{B}_{ij}\}=\text{diag}(\bar{\lambda}_1,\cdots,\bar{\lambda}_l)$ and
$B_{ij}=0$ for some $i\ne j$ (i.e. there is at least one
off-diagonal element of $\{B_{ij}\}$ equals to zero).
\end{lemma}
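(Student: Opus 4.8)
The plan is to argue by contradiction: suppose that for \emph{every} orthonormal basis that diagonalizes $\bar B$, the matrix $\{B_{ij}\}$ has all off-diagonal entries nonzero, and derive a contradiction with the hypothesis $S=\bar S$. First I would fix an orthonormal basis $\{e_1,\dots,e_l\}$ in which $\{\bar B_{ij}\}=\mathrm{diag}(\bar\lambda_1,\dots,\bar\lambda_l)$; this is always possible since $\bar B$ is symmetric. The key structural fact to exploit is that $S$ is, up to the trace-correction terms, the ``curvature-type'' quadratic expression built from $B$: writing $P_{ij}:=\sum_m B_{im}B_{mj}=(B^2)_{ij}$, one has
\begin{equation*}
S_{ijkl}=B_{ik}B_{jl}-B_{il}B_{jk}+\tfrac{1}{n-2}\bigl(\delta_{ik}P_{jl}+\delta_{jl}P_{ik}-\delta_{il}P_{jk}-\delta_{jk}P_{il}\bigr),
\end{equation*}
and similarly for $\bar S$ with $\bar B,\bar P$. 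Because $\bar B$ is diagonal in our basis, $\bar P=\bar B^2$ is diagonal too, so $\bar S_{ijkl}$ is very rigid: for instance $\bar S_{ijij}=\bar\lambda_i\bar\lambda_j+\tfrac{1}{n-2}(\bar\lambda_i^2+\bar\lambda_j^2)$ for $i\ne j$, while $\bar S_{ijkl}=0$ whenever the index pattern forces an off-diagonal $\bar B$- or $\bar P$-entry to appear, e.g. $\bar S_{ijik}=0$ for distinct $i,j,k$.

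Next I would extract equations from $S=\bar S$ by specializing the indices. The ``mixed'' components $S_{ijik}=\bar S_{ijik}=0$ for pairwise distinct $i,j,k$ give
\begin{equation*}
B_{ii}B_{jk}-B_{ik}B_{ji}+\tfrac{1}{n-2}\,P_{jk}\,\delta\text{-free}=0,
\end{equation*}
more precisely (taking $i,j,k$ all distinct so no Kronecker deltas survive except the $\delta_{ik}$-type which vanish) $B_{ii}B_{jk}=B_{ij}B_{ik}$ together with the constraint coming from $P_{jk}=\sum_m B_{jm}B_{mk}$; this is where $l\ge 3$ is essential, since we need a third index $k$ available. These relations, holding for all choices of the three distinct indices, are strong: they say that the off-diagonal structure of $B$ is ``rank-one-like'' on any triple. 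The standard move is to show that the off-diagonal pattern of $B$ must then be supported on a subspace of dimension $\le 2$ — i.e.\ $B$ is block-diagonal with one $2\times 2$ block and the rest diagonal — which in particular forces some $B_{ij}=0$ with $i\ne j$, contradicting the assumption. Alternatively, and perhaps more cleanly, I would combine the diagonal relations $S_{ijij}=\bar S_{ijij}$, which give $B_{ii}B_{jj}-B_{ij}^2+\tfrac{1}{n-2}(P_{ii}+P_{jj})=\bar\lambda_i\bar\lambda_j+\tfrac{1}{n-2}(\bar\lambda_i^2+\bar\lambda_j^2)$, with the mixed ones, to pin down the eigenstructure of $B$ relative to that of $\bar B$.

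The main obstacle I anticipate is the bookkeeping of the trace terms: because $S$ uses the global fraction $\tfrac{1}{n-2}$ (not $\tfrac{1}{l-2}$) while the indices only run to $l$, the ``partial traces'' $P_{jl}=\sum_{m=1}^l B_{jm}B_{ml}$ couple the off-diagonal entries in a way that does not neatly decouple index by index, so one cannot simply read off $B_{ij}=0$ from a single component equation. The trick will be to pass to a well-chosen two-dimensional coordinate plane: if \emph{all} off-diagonal $B_{ij}\ne 0$ then restricting attention to indices $i,j,k$ and eliminating $P$-terms between $S_{ijik}=0$ and, say, $S_{jijk}=0$ should yield a homogeneous relation among $B_{ij},B_{ik},B_{jk}$ and the diagonal entries that is incompatible with a third such relation obtained by permuting indices — forcing a contradiction. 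I expect the cleanest route is induction on $l$: handle $l=3$ by direct elimination (finitely many scalar equations), and for $l>3$ show that some principal $3\times 3$ sub-block must already exhibit a zero off-diagonal $B$-entry, then reduce. Once one off-diagonal entry of $B$ vanishes in \emph{some} $\bar B$-diagonalizing basis, we are done, since rotating within a degenerate eigenspace of $\bar B$ does not affect its diagonal form.
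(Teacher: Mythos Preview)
Your setup is fine and logically equivalent to the paper's: assuming that \emph{every} $\bar B$-diagonalizing basis leaves all off-diagonal $B_{ij}$ nonzero forces the $\bar\lambda_i$ to be pairwise distinct (otherwise a rotation in an eigenplane kills some $B_{\alpha\beta}$), so the task reduces to showing that this last situation is impossible. But from that point on your proposal has a genuine gap: you never execute the contradiction, and the steps you outline do not lead to one.

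First a small slip. For pairwise distinct $i,j,k$ the relation $S_{ijik}=0$ reads
\[
B_{ii}B_{jk}-B_{ij}B_{ik}+\tfrac{1}{n-2}\sum_{m=1}^{l}B_{jm}B_{mk}=0,
\]
because the term $\delta_{ii}P_{jk}$ survives; it is \emph{not} the rank-one relation $B_{ii}B_{jk}=B_{ij}B_{ik}$. The $P_{jk}$ term is always there and has to be carried through the algebra, so the ``rank-one-like on triples'' picture, and the conclusion that $B$ is forced into a $2\times 2$ block plus diagonal, are not justified.

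The paper's argument runs differently and uses two ingredients you are missing. One is the four-distinct-index component: for $m,\alpha\ge 3$ with $m\ne\alpha$, diagonality of $\bar B$ gives $\bar S_{21m\alpha}=0$, hence $B_{2m}B_{1\alpha}=B_{2\alpha}B_{1m}$. The other is an elimination between the two instances of your relation with $(i,j,k)=(1,2,\alpha)$ and $(2,1,\alpha)$: dividing through by the nonzero off-diagonal entries and subtracting, the four-index identity kills the residual sum and (using $\tfrac{1}{n-2}\ne 1$, i.e.\ $n\ge 4$) one obtains
\[
\frac{B_{11}-B_{22}}{B_{12}}=\frac{B_{1\alpha}}{B_{2\alpha}}-\frac{B_{2\alpha}}{B_{1\alpha}}=:b,
\]
independent of $\alpha$. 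With this $b$ in hand, the $B$-side of $S_{\alpha 1\alpha 1}-S_{\alpha 2\alpha 2}$ factors as $b$ times an expression that vanishes again by $S_{\alpha 1\alpha 2}=0$. Equating with the $\bar B$-side yields
\[
(\bar\lambda_1-\bar\lambda_2)\Bigl[\bar\lambda_\alpha+\tfrac{1}{n-2}(\bar\lambda_1+\bar\lambda_2)\Bigr]=0\quad(3\le\alpha\le l),
\]
which for $l\ge 4$ forces a repeated $\bar\lambda$; for $l=3$ the same identity under cyclic permutation of $1,2,3$ does the job. No induction on $l$ is needed, and the contradiction lands on the $\bar\lambda$-side rather than on an alleged block structure of $B$.
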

\begin{proof} Since $\bar{B}$ is symmetric, we can always diagonalize
it as
$\{\bar{B}_{ij}\}=\text{diag}(\bar{\lambda}_1,\cdots,\bar{\lambda}_l)$ with
respect to an orthonormal basis of $V$. If there has been some
$B_{ij}=0$ with $i\ne j$ at the same time, we are done. Otherwise,
suppose all the off-diagonal elements of $\{B_{ij}\}$ are non-zero.
In this case we make the following

{\bf Assertion:} $~\{\bar{\lambda}_1,\cdots,\bar{\lambda}_l\}$
could not be all distinct. \\
Hence there must exist two equal eigenvalues
$\bar{\lambda}_\alpha=\bar{\lambda}_\beta$, which enables us to
rotate the basis vectors $\{e_\alpha,e_\beta\}$ properly in the
plane $span\{e_\alpha,e_\beta\}$ and to obtain a new orthonormal
basis of $V$, so that $\{\bar{B}_{ij}\}$ is still a diagonal matrix
and $B_{\alpha\beta}=0$. This completes the proof.

To prove the assertion above (on condition that $B_{ij}\ne 0,
\forall~i\ne j$), we substitute the expressions of $S,\bar{S}$ and
$\{\bar{B}_{ij}\}=\text{diag}(\bar{\lambda}_1,\cdots,\bar{\lambda}_l)$ into
the equality
\[
S_{\alpha 1\alpha 1}-S_{\alpha 2\alpha 2} =\bar{S}_{\alpha 1\alpha
1}-\bar{S}_{\alpha 2\alpha 2}, ~~~\forall~3\le\alpha\le l.
\]
As the result we obtain
\begin{align}
B_{\alpha\alpha}&(B_{11}-B_{22})-(B_{1\alpha}^2-B_{2\alpha}^2)
+\frac{1}{n-2}\sum_{m=1}^l(B_{1m}^2-B_{2m}^2) \label{s2}\\
&=(\bar\lambda_1-\bar\lambda_2)[\bar\lambda_{\alpha}
+\frac{1}{n-2}(\bar\lambda_1+\bar\lambda_2)],
~~~~~\forall~3\le\alpha\le l.  \notag
\end{align}
In the following let the range of the index $\alpha$ be
$3\le\alpha\le l$. We want to show that the left hand side of
\eqref{s2} vanishes. First note that
$\{\bar{B}_{ij}\}=\text{diag}(\bar{\lambda}_1,\cdots,\bar{\lambda}_l)$
implies
\[\bar{S}_{ijik}=0,\]
when $i,j,k$ are distinct. It follows from the equality
$S_{ijik}=\bar{S}_{ijik}$ that
\begin{equation}\label{s3}
B_{ii}B_{jk}-B_{ij}B_{ik}+\frac{1}{n-2} \sum_{m=1}^l
B_{jm}B_{km}=0,~~~~\forall~\text{distinct}~i,j,k.
\end{equation}
Hence
\begin{align}
\frac{B_{11}}{B_{12}}-\frac{B_{1\alpha}}{B_{2\alpha}}
&=-\frac{1}{n-2}\cdot\frac{1}{B_{12}B_{2\alpha}}\cdot
\sum_{m=1}^l B_{2m}B_{m\alpha} \label{s4}\\
&=-\frac{1}{n-2}\left[\frac{B_{1\alpha}}{B_{2\alpha}}
+\frac{B_{22}}{B_{12}}+\sum_{m=3}^l
\frac{B_{m\alpha}}{B_{12}}\cdot\frac{B_{2m}}{B_{2\alpha}}
\right].\notag
\end{align}
Similarly one can find
\begin{align}
\frac{B_{22}}{B_{12}}-\frac{B_{2\alpha}}{B_{1\alpha}}
&=-\frac{1}{n-2}\cdot\frac{1}{B_{12}B_{1\alpha}}\cdot
\sum_{m=1}^l B_{1m}B_{m\alpha} \label{s5}\\
&=-\frac{1}{n-2}\left[\frac{B_{2\alpha}}{B_{1\alpha}}
+\frac{B_{11}}{B_{12}}+\sum_{m=3}^l
\frac{B_{m\alpha}}{B_{12}}\cdot\frac{B_{1m}}{B_{1\alpha}}
\right].\notag
\end{align}
Taking $\eqref{s4}-\eqref{s5}$ yields
\[
\left[ \frac{B_{11}-B_{22}}{B_{12}}-
\frac{B_{1\alpha}}{B_{2\alpha}}+\frac{B_{2\alpha}}{B_{1\alpha}}
\right]\left(1-\frac{1}{n-2}\right) =-\frac{1}{n-2}\sum_{m=3}^l
\frac{B_{m\alpha}}{B_{12}} \left(\frac{B_{2m}}{B_{2\alpha}}
-\frac{B_{1m}}{B_{1\alpha}}\right) =0,
\]
due to $B_{2m}B_{1\alpha}-B_{2\alpha}B_{1m}
=S_{21m\alpha}=\bar{S}_{21m\alpha}=0$ when $\bar{B}$ is diagonal and
$m,\alpha\ge3$. We conclude
\[
\frac{B_{11}-B_{22}}{B_{12}}=
\frac{B_{1\alpha}}{B_{2\alpha}}-\frac{B_{2\alpha}}{B_{1\alpha}} =b,
\]
for some constant $b$. It follows that
\begin{align*}
&B_{\alpha\alpha}(B_{11}-B_{22})-(B_{1\alpha}^2-B_{2\alpha}^2)
+\frac{1}{n-2}\sum_{m=1}^l(B_{1m}^2-B_{2m}^2)\\
=~&B_{\alpha\alpha}(B_{11}-B_{22})-(B_{1\alpha}^2-B_{2\alpha}^2)
+\frac{1}{n-2}(B_{11}^2-B_{22}^2)
+\frac{1}{n-2}\sum_{m=3}^l(B_{1m}^2-B_{2m}^2)\\
=~&B_{\alpha\alpha}\cdot bB_{12}-b\cdot B_{1\alpha}B_{2\alpha}
+\frac{1}{n-2}(B_{11}+B_{22})\cdot bB_{12}
+\frac{1}{n-2}\sum_{m=3}^l(b\cdot B_{1m}B_{2m})\\
=~&b\left[B_{\alpha\alpha}B_{12}-B_{1\alpha}B_{2\alpha}
+\frac{1}{n-2}\sum_{m=1}^l B_{1m}B_{2m}\right]=0,
\end{align*}
by \eqref{s3}. From \eqref{s2} we have
\begin{equation}\label{lambda}
(\bar\lambda_1-\bar\lambda_2)[\bar\lambda_{\alpha}
+\frac{1}{n-2}(\bar\lambda_1+\bar\lambda_2)]=0.
\end{equation}
So either $\bar\lambda_1=\bar\lambda_2$, or $\bar\lambda_{\alpha}
=-\frac{1}{n-2}(\bar\lambda_1+\bar\lambda_2)],
~\forall~3\le\alpha\le l.$ This verifies the assertion when $l\ge
4$.

The only case unsolved is when $l=3$. This time \eqref{lambda} takes
the form
\[
(\bar\lambda_1-\bar\lambda_2)[\bar\lambda_3
+\frac{1}{n-2}(\bar\lambda_1+\bar\lambda_2) ]=0.
\]
Taking permutation of the indices $1,2,3$ yields two other similar
formulas. Now it is easy to prove that
$\bar\lambda_1,\bar\lambda_2,\bar\lambda_3$ can not be all distinct
by contradiction. Hence the proof to Lemma \ref{lem1} is finished.
\end{proof}
\begin{Remark}\label{rem-dim3}
Note that in the proof above we used the fact
$\frac{1}{n-2}\ne 1$ at two places. Thus the condition $n\ge 4$
is necessary. On the other hand, by the integrability equations (\ref{equa4})
the Weyl conformal tenor associated with the M\"obius metric $g$
can be expressed by the M\"{o}bius invariants as below:
\begin{equation*}
\begin{split}
C_{ijkl}&=B_{ik}B_{jl}-B_{il}B_{jk}-\frac{1}{n(n-2)}(\delta_{ik}\delta_{jl}-\delta_{jk}\delta_{il})\\
&+\frac{1}{n-2}\sum_m
\{\delta_{ik}B_{jm}B_{ml}+\delta_{jl}B_{im}B_{mk}
-\delta_{il}B_{jm}B_{mk}-\delta_{jk}B_{im}B_{ml}\}\\
&=S_{ijkl}-\frac{1}{n(n-2)}(\delta_{ik}\delta_{jl}
-\delta_{jk}\delta_{il}).
\end{split}
\end{equation*}
It is well known that the Weyl conformal tenor vanishes on three dimensional Riemannian manifold. Therefore when $n=3$, $S_{ijkl}=\frac{1}{3}(\delta_{ik}\delta_{jl}
-\delta_{jk}\delta_{il})=\bar{S}_{ijkl}$ is a trivial identity.
\end{Remark}
\begin{lemma}\label{lem2}
Assumptions as in Lemma~\ref{lem1}. By the conclusion above, without
loss of generality we may suppose that for a given orthonormal basis
of $V$ there are
$\{\bar{B}_{ij}\}=\text{diag}(\bar{\lambda}_1,\cdots,\bar{\lambda}_l)$ and
$B_{ij}=0$ for some $i\ne j$. Then there exists a properly chosen
new orthonormal basis of $V$, with respect to which
$\{\bar{B}_{ij}\}$ is still diagonal and
\begin{eqnarray*}
\{B_{ij}\}=\left(
    \begin{array}{cccc}
      B_{11} & \cdots & B_{1,l-1} & 0\\
      \vdots & \ddots & \vdots & \vdots \\
      B_{l-1,1} & \cdots & B_{l-1,l-1} & 0 \\
      0 & \cdots & 0 & B_{ll}\\
    \end{array}
  \right)
\end{eqnarray*}
is a semi-diagonal matrix.
\end{lemma}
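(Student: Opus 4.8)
The plan is to extract two algebraic consequences of the hypothesis $S=\bar S$ (with $\bar B$ already diagonal) and then run a short case analysis. The first consequence is the three-index identity \eqref{s3}, already derived in the proof of Lemma~\ref{lem1}: for distinct $i,j,k$,
\[
B_{ii}B_{jk}-B_{ij}B_{ik}+\tfrac{1}{n-2}\sum_m B_{jm}B_{mk}=0 .
\]
The second comes from comparing $S_{ijkl}$ with $\bar S_{ijkl}$ for \emph{four distinct} indices: every Kronecker delta in \eqref{s0} drops out, and diagonality of $\bar B$ forces $\bar S_{ijkl}=0$, whence the Pl\"{u}cker-type relation $B_{ik}B_{jl}=B_{il}B_{jk}$ for all distinct $i,j,k,l$. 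Equivalently, for any four distinct indices the three products $B_{ij}B_{kl}$, $B_{ik}B_{jl}$, $B_{il}B_{jk}$ all coincide, so they all vanish once one off-diagonal entry among those four indices is zero.

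Now assume $\bar B=\operatorname{diag}(\bar\lambda_1,\dots,\bar\lambda_l)$ and $B_{ab}=0$ for some $a\neq b$, as provided by Lemma~\ref{lem1}. For $l=3$ I would argue directly: the only relevant triple is $\{a,b,c\}$, and substituting $B_{ab}=0$ into \eqref{s3} with $i=c$ collapses it---using $\tfrac{1}{n-2}\neq 1$---to $B_{ac}B_{bc}=0$, so one of $e_a,e_b$ is a common eigenvector of $B$ and $\bar B$. For $l\geq 4$ I would feed $B_{ab}=0$ into the Pl\"{u}cker relation, obtaining $B_{ak}B_{bl}=0$ for all $k,l\notin\{a,b\}$. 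Either the off-diagonal part of row/column $a$ vanishes entirely (then $e_a$ is a common eigenvector), or a single entry $B_{ak_0}\neq 0$ survives, forcing $B_{bl}=0$ for every $l\notin\{b,k_0\}$. Applying the same dichotomy to each new zero $B_{bl_0}=0$ (keeping $B_{bk_0}\neq0$ in mind) pushes all off-diagonal entries of $B$ onto the single \emph{central} index $k_0$, unless some index first acquires a vanishing off-diagonal row/column---in which case we are done. The remaining \emph{arrowhead} configuration, where $B$ is diagonal except for a full row/column at $k_0$, I would rule out by plugging three distinct indices $i,j,k$ all different from $k_0$ (possible since $l\geq 4$) into \eqref{s3}: the only surviving term is $\tfrac{1}{n-2}B_{jk_0}B_{k_0 k}$, so $B_{jk_0}B_{kk_0}=0$, contradicting fullness. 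In every case $B$ thus shares an eigenvector with $\bar B$; permuting that eigenvector to the last slot preserves the diagonal form of $\bar B$ and puts $B$ into the claimed semi-diagonal form.

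The step I expect to be the main obstacle is the bookkeeping in the $l\geq4$ case: one must check carefully that the iterated use of the Pl\"{u}cker relation genuinely concentrates all off-diagonal mass of $B$ onto one index before the \eqref{s3} contradiction can be invoked, and that no branch of the dichotomy is overlooked. The case $l=3$, where no Pl\"{u}cker relation is available, must be handled separately by the short computation above. Note that, unlike in Lemma~\ref{lem1}, no rotation inside a $\bar B$-eigenspace is needed here---only a coordinate permutation.
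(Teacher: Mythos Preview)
Your proof is correct and rests on the same two ingredients as the paper's: the three-index identity \eqref{s3} and the Pl\"ucker relation $B_{ik}B_{jl}=B_{il}B_{jk}$ obtained from $S_{ijkl}=\bar S_{ijkl}=0$ for four distinct indices. The difference is organizational rather than substantive. The paper fixes the vanishing entry at position $(l{-}1,l)$ and observes directly from Pl\"ucker that either all $B_{il}$ vanish (so $e_l$ is the common eigenvector) or else all $B_{jk}$ with $j<k=l{-}1$ vanish; one further application of \eqref{s3} then kills the remaining entry $B_{1,l-1}$, so $e_{l-1}$ is the common eigenvector and a single transposition finishes. Your argument instead runs a short propagation to concentrate all off-diagonal mass onto one ``central'' index $k_0$ and then rules out the resulting arrowhead via \eqref{s3}. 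This works, but note that your ``iteration'' really terminates after two steps (once $B_{ak_0}\neq0$ and $B_{bk_0}\neq0$ are both assumed, Pl\"ucker forces every other row to be supported only at $k_0$ in one stroke), so the bookkeeping you flag as the main obstacle is lighter than you suggest. In either version no rotation inside a $\bar B$-eigenspace is required; a coordinate permutation suffices, as you note.
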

\begin{proof}
For simplicity denote $k=l-1$. Without loss of generality we may
assume that the off-diagonal element $B_{kl}=0$.

First we consider the easy case $l=3$. As in \eqref{s3}, we have
\[
0=B_{11}B_{23}-B_{12}B_{13}+\frac{1}{n-2} \sum_{m=1}^3 B_{2m}B_{m3}
=\left(\frac{1}{n-2}-1\right) B_{12}B_{13},
\]
because $B_{23}=0$ as assumed. It follows that either $B_{12}=0$ or
$B_{13}=0$, and the conclusion is proved.

In general, when $l\ge 4$, for any $i<j<k=l-1$ there is
\begin{equation}\label{s6}
0=\bar{S}_{ikjl}=S_{ikjl}=B_{ij}B_{kl}-B_{il}B_{kj} =-B_{il}B_{kj}.
\end{equation}
If $B_{il}=0$ for any $i< k=l-1$, then all the off-diagonal elements
in the $l$-th column and the $l$-th row vanish, and we are done.
Otherwise, suppose $B_{1l}\ne 0$ without loss of generality. Then by
\eqref{s6}, $B_{jk}=0,~\forall~1<j<k=l-1$. Using this result and
$B_{lk}=0, B_{1l}\ne 0$, we may prove $B_{1k}=0$ by \eqref{s3}:
\[
0=B_{11}B_{kl}-B_{1k}B_{1l}+\frac{1}{n-2} \sum_{j=1}^l B_{jk}B_{jl}
=\left(\frac{1}{n-2}-1\right) B_{1k}B_{1l}.
\]
So $B_{jk}=0,~\forall~j\ne k$. That means all the off-diagonal
elements in the $k$-th column and the $k$-th row vanish.
Interchanging the basis vectors $e_k$ and $e_l$ gives the desired
result. The proof to Lemma \ref{lem2} is finished.
\end{proof}

\begin{proof}[Proof to Theorem~\ref{th1}]
From Lemma~\ref{lem1} and Lemma~\ref{lem2} and by induction it is
easy to see that $\{B_{ij}\},\{\bar{B}_{ij}\}$ can be diagonalized
simultaneously except that $B_{12}$ might be non-zero.

Denote
$\{\bar{B}_{ij}\}=\text{diag}(\bar{\lambda}_1,\cdots,\bar{\lambda}_n)$ as
before. When $B_{12}=0$ it is the first case in the conclusion. If
$B_{12}\ne 0$ yet $\bar{\lambda}_1=\bar{\lambda}_2$, one might
rotate the basis vectors $\{e_1,e_2\}$ properly in the plane
$span\{e_1,e_2\}$ and obtain a new orthonormal basis of $V$ such
that $\{\bar{B}_{ij}\}$ is invariant and $B_{12}=0$, hence we are
also done. The final part of the proof is to show that when
$B_{12}\ne 0$ and $\bar{\lambda}_1\ne \bar{\lambda}_2$, $\{B_{ij}\}$
and $\{\bar{B}_{ij}\}$ must have the desired multiplicities of their
eigenvalues.

Again by \eqref{s3}, $\forall~3\le\alpha\le n$,
\[
0=B_{\alpha\alpha}B_{12}-B_{\alpha 1}B_{\alpha 2}
+\frac{1}{n-2}\sum_{m=1}^n B_{1m}B_{m2} =B_{12}\left[
B_{\alpha\alpha}+\frac{1}{n-2}(B_{11}+B_{22})\right ].
\]
Thus $B_{\alpha\alpha}=-\frac{1}{n-2}(B_{11}+B_{22})=\mu$ for any $\alpha\ge 3$. So $\{B_{ij}\}$ has the desired
form. As a by-product we find that
\begin{equation}\label{tr}
tr(B)=B_{11}+B_{22}+(n-2)\mu=0.
\end{equation}
Taking use of the fact above and the equalities $S_{1\alpha
1\alpha}=\bar{S}_{1\alpha 1\alpha}, S_{2\alpha
2\alpha}=\bar{S}_{2\alpha 2\alpha}$, we have
\begin{align}
B_{11}\mu+\frac{1}{n-2}\left ( \lambda^2+B_{11}^2+B_{12}^2\right )
&= \bar\lambda_1\bar\lambda_{\alpha}+\frac{1}{n-2}\left (
\bar\lambda_{\alpha}^2+\bar\lambda_1^2\right ), \label{b1}\\
B_{22}\mu+\frac{1}{n-2}\left ( \lambda^2+B_{22}^2+B_{12}^2\right )
&= \bar\lambda_2\bar\lambda_{\alpha}+\frac{1}{n-2}\left (
\bar\lambda_{\alpha}^2+\bar\lambda_2^2\right ), \label{b2}
\end{align}
for any $\alpha \ge 3$. Taking $\eqref{b1}-\eqref{b2}$ yields
\[
(B_{11}-B_{22})\left [
B_{\alpha\alpha}+\frac{1}{n-2}(B_{11}+B_{22})\right ]
=(\bar\lambda_1-\bar\lambda_2)\left [
\bar\lambda_{\alpha}+\frac{1}{n-2}
(\bar\lambda_1+\bar\lambda_2)\right ].
\]
The left hand side vanishes by \eqref{tr}. It follows that
$\bar\lambda_{\alpha}=-\frac{1}{n-2}
(\bar\lambda_1+\bar\lambda_2)=\bar\lambda$ for all
$\alpha\ge 3$ (keep in mind that $\bar{\lambda}_1\ne
\bar{\lambda}_2$ at here) and $tr(\bar{B})=0$. Finally
$S_{3434}=\bar{S}_{3434}$ implies $\mu^2=\bar\mu^2$.
This finishes the proof to Theorem \ref{th1}.
\end{proof}

\section{Hypersurfaces with low multiplicities: rigidity} Let $f,\bar{f}: M^n\rightarrow R^{n+1}(n\geq
4)$ be two hypersurfaces without umbilics. In this section and the
following two, the M\"{o}bius
invariants of $f$ will be denoted by $\{A,B,C\}$ and those of $\bar{f}$ by $\{\bar{A},\bar{B},\bar{C}\}$.
\begin{Theorem}\label{thm61}
Let $f,\bar{f}: M^n\rightarrow R^{n+1}(n\geq 4)$ be two immersed
hypersurfaces without umbilics, whose principal curvatures have constant multiplicities. Assume that they induce the same
M\"{o}bius metrics $g$,
and all principal curvatures of $B$ have multiplicity less than
$n-2$ everywhere. Then $f$ is M\"{o}bius
congruent to $\bar{f}$.
\end{Theorem}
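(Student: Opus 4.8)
The plan is to deduce that $f$ and $\bar f$ induce the same M\"obius second fundamental form (up to sign and up to an isometry of $(M^n,g)$ preserving it), after which Theorem~\ref{fundthe} finishes the argument. The starting point is the Gauss equation \eqref{equa4}--\eqref{equa5}: since $g=\bar g$, the curvature tensor $R_{ijkl}$ and the Ricci tensor are common to both hypersurfaces. Subtracting the two instances of \eqref{equa5} expresses $(n-2)(A_{ij}-\bar A_{ij})$ in terms of $B_{ik}B_{kj}-\bar B_{ik}\bar B_{kj}$; plugging this back into \eqref{equa4} eliminates $A$ entirely and yields exactly the algebraic identity $S_{ijkl}=\bar S_{ijkl}$ with $S$ as in \eqref{s0}. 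This is the pointwise hypothesis of Theorem~\ref{th1}.

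Next I would apply Theorem~\ref{th1} at each point $p\in M^n$. Two alternatives arise. In the first, $B$ and $\bar B$ are simultaneously diagonalizable at $p$: then their eigenvalue sets coincide pointwise (comparing $S_{ijij}$ forces the relation between eigenvalues, and one uses the normalizations $\sum B_{ii}=0$, $\sum B_{ij}^2=\frac{n-1}{n}$ from \eqref{equa6} to pin down signs), so $B=\pm\bar B$ with a common principal frame; the overall sign is locally constant, and a sign flip corresponds to composing with a conformal transformation (the $-B$ case is realized by an ambient M\"obius map, as $-\xi_i$ describes the reflected sphere), so $f\sim\bar f$. In the second alternative, Theorem~\ref{th1} produces an eigenvalue of $\bar B$ of multiplicity at least $n-2$ at $p$ --- but this directly contradicts the hypothesis that every principal curvature of $\bar f$ (equivalently of $B$, since $g=\bar g$ implies the multiplicities match, and in any case one runs the same argument with the roles of $f,\bar f$ swapped) has multiplicity less than $n-2$. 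Hence the second alternative never occurs, and $B=\pm\bar B$ everywhere.

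The remaining point is to upgrade this pointwise conclusion to a global M\"obius congruence. Because the principal curvatures have constant multiplicities by hypothesis, the common principal distributions of $B$ (and $\bar B$) are smooth, so one has a genuine smooth orthonormal frame $\{E_i\}$ diagonalizing both. The sign function $\epsilon(p)\in\{+1,-1\}$ with $\bar B=\epsilon B$ is continuous, hence constant on the connected $M^n$. If $\epsilon\equiv 1$ then $B=\bar B$ as tensors on $(M^n,g)$ and Theorem~\ref{fundthe} gives the M\"obius equivalence directly (with $\varphi=\mathrm{id}$). If $\epsilon\equiv -1$, one checks that replacing $\bar f$ by its composition with a suitable M\"obius (in fact conformal) transformation of $R^{n+1}$ --- concretely, the one sending each curvature sphere $\bar\xi_i$ to $-\bar\xi_i$, i.e. reversing orientation of the enveloped spheres --- changes the sign of $\bar B$ while preserving $\bar g$, reducing to the previous case.

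The main obstacle I anticipate is the sign issue, i.e.\ correctly identifying $B$ and $-\bar B$ as M\"obius-equivalent data rather than genuinely distinct, and verifying that a reflection-type M\"obius transformation realizes the sign change globally and compatibly with the frame; the purely algebraic step (deriving $S=\bar S$ and invoking Theorem~\ref{th1}) and the exclusion of the degenerate alternative via the multiplicity hypothesis are comparatively routine. One should also be a little careful that ``multiplicity less than $n-2$'' is a hypothesis on $f$, and check it transfers to $\bar f$: since $g=\bar g$ and $S=\bar S$, whenever $\bar B$ has an eigenvalue of multiplicity $\ge n-2$ Theorem~\ref{th1} (applied in the other direction, or its conclusion read off directly) would force the same for the simultaneously-diagonalized pair, contradicting the assumption on $f$ --- so both hypersurfaces satisfy the low-multiplicity condition and the argument is symmetric.
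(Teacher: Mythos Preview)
Your outline is correct up through applying Theorem~\ref{th1} and excluding its second alternative via the multiplicity hypothesis. The gap is in the next step: from simultaneous diagonalizability you write ``their eigenvalue sets coincide pointwise \ldots\ so $B=\pm\bar B$ with a common principal frame,'' but this is precisely the substantive content of the theorem and is not immediate. The identity $S_{ijij}=\bar S_{ijij}$ gives only
\[
\lambda_i\lambda_j+\tfrac{1}{n-2}(\lambda_i^2+\lambda_j^2)
=\bar\lambda_i\bar\lambda_j+\tfrac{1}{n-2}(\bar\lambda_i^2+\bar\lambda_j^2),
\qquad i\ne j,
\]
and the normalizations $\sum\lambda_i=0$, $\sum\lambda_i^2=\tfrac{n-1}{n}$ do \emph{not} by themselves force $\bar\lambda_i=\epsilon\lambda_i$ with a single sign $\epsilon$.

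For $n\ge 5$ one can finish algebraically (this is the paper's Proposition~\ref{prop62}): the identities above make the points $(\lambda_j,\bar\lambda_j)$ collinear in the plane, whence $\bar\lambda_j=b\lambda_j+c$, and then \eqref{equa6} gives $c=0$, $b=\pm1$. This is easy but not what you wrote.

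For $n=4$ the situation is genuinely worse. From \eqref{t} one only gets $\lambda_i+\lambda_j=\pm(\bar\lambda_i+\bar\lambda_j)$, which together with the trace condition still allows ``permuted'' configurations such as
\[
(\bar\lambda_1,\bar\lambda_2,\bar\lambda_3,\bar\lambda_4)=\pm(\lambda_2,\lambda_1,\lambda_4,\lambda_3),
\]
with $\lambda_1\ne\pm\lambda_2$. These satisfy all the pointwise algebraic constraints from $S=\bar S$ and \eqref{equa6}, so no purely algebraic argument at a point can rule them out. The paper (Proposition~\ref{prop63}) eliminates them by a differential argument: one computes the covariant derivatives $B_{ij,k}$, $\bar B_{ij,k}$, the connection forms $\omega_{ij}$, and several sectional curvatures, eventually obtaining $(\lambda_1-\lambda_2)(\lambda_3-\lambda_4)=0$, a contradiction. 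This is the real work in dimension four and is entirely missing from your proposal.

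A minor remark: your worry about transferring the multiplicity hypothesis from $f$ to $\bar f$ is unnecessary. In the second alternative of Theorem~\ref{th1} the displayed form of $\{B_{ij}\}$ already has the eigenvalue $\mu$ with multiplicity at least $n-2$, so the hypothesis on $f$ alone excludes it.
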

We divide our proof into two parts. The case of dimension $n=4$ is different from higher dimensional case ($n\ge 5$) and need to be discussed separately. Before that
we make some preparation first.

The same M\"{o}bius metric $g$ for $f,\bar{f}$ determines
the same curvature tensor $R_{ijkl}$.
By the integrability equations \eqref{equa4}\eqref{equa5},
the conclusion of Theorem~\ref{th1} applies to
the M\"{o}bius second fundamental forms $B,\bar{B}$.
Since the multiplicities of all principal
curvatures are less than $n-2$ at here by assumption,
Theorem~\ref{th1} guarantees that locally we can
choose an orthonormal basis
$\{E_1,\cdots,E_n\}$ with respect to $g$ such that
\begin{eqnarray*}
\{B_{ij}\}=\text{diag}(\lambda_1,\cdots,\lambda_n);
\{\bar{B}_{ij}\}=\text{diag}(\bar{\lambda}_1,\cdots,\bar{\lambda}_n).
\end{eqnarray*}
Now \eqref{equa4}\eqref{equa5} imply
\begin{equation}\label{t}
\lambda_i\lambda_j+\frac{1}{n-2}(\lambda_i^2+\lambda_j^2)
=\bar{\lambda}_i\bar{\lambda}_j+\frac{1}{n-2}(\bar{\lambda}_i^2
+\bar{\lambda}_j^2),~~\forall~i\ne j.
\end{equation}
Changing the subscript of \eqref{t} and taking difference, we get
\begin{equation}\label{t1}
(\lambda_i-\lambda_k)[\lambda_j+\frac{1}{n-2}(\lambda_i+\lambda_k)]
=(\bar{\lambda}_i-\bar{\lambda}_k)[\bar{\lambda}_j+
\frac{1}{n-2}(\bar{\lambda}_i+\bar{\lambda}_k)],
~~\forall~\text{distinct}~i,j,k.
\end{equation}
To obtain the rigidity result we need only to show that
$\bar{B}=\pm B$; reverse the direction of the
normal vector field of $f$ if necessary we will have
$\bar{B}= B$, which shows that $f$ is congruent to $\bar{f}$
by the fundamental theorem~\ref{fundthe}.
\begin{PROPOSITION}\label{prop62}
The conclusion of Theorem~\ref{thm61} is valid when the dimension $n\ge 5$.
\end{PROPOSITION}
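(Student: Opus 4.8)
The plan is to turn the Gauss relations \eqref{t}, \eqref{t1} into a statement in Lorentzian linear algebra. By \eqref{equa6} we have $\operatorname{tr}B=\operatorname{tr}\bar B=0$ and $\sum_i\lambda_i^2=\sum_i\bar\lambda_i^2=\tfrac{n-1}{n}$. I would introduce the vectors $v_i=(\lambda_i,\bar\lambda_i)$ in the Lorentz plane $\mathbb{R}^{1,1}$ with inner product $\langle(a,b),(c,d)\rangle=ac-bd$, and set $N_i=\langle v_i,v_i\rangle=\lambda_i^2-\bar\lambda_i^2$. Then $\sum_i v_i=0$, $\sum_i N_i=0$, and \eqref{t} is precisely
\[
\langle v_i,v_j\rangle=-\tfrac{1}{n-2}\,(N_i+N_j),\qquad i\ne j .
\]
The two null lines of $\langle\cdot,\cdot\rangle$ are $\{\bar\lambda=\lambda\}$ and $\{\bar\lambda=-\lambda\}$, so proving $\bar B=\pm B$ amounts to showing that all $v_i$ lie on a single one of these null lines.

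Assume, for contradiction, that not all $N_i$ vanish. If all $N_i$ were equal, $\sum_iN_i=0$ would force $N_i\equiv 0$; hence $N=(N_1,\dots,N_n)^{T}$ is not proportional to $\mathbf 1=(1,\dots,1)^{T}$. Let $G=(\langle v_i,v_j\rangle)$ be the Gram matrix of the $v_i$; as a Gram matrix for a form of signature $(1,1)$ it has at most one positive and at most one negative eigenvalue. The displayed identity says $G+\tfrac{1}{n-2}\bigl(N\mathbf 1^{T}+\mathbf 1N^{T}\bigr)=\tfrac{n}{n-2}\operatorname{diag}(N)$; since $N\mathbf 1^{T}+\mathbf 1N^{T}$ has rank $\le 2$ and, because $N\not\parallel\mathbf 1$, exactly one positive and one negative eigenvalue, the subadditivity $n_{\pm}(A+B)\le n_{\pm}(A)+n_{\pm}(B)$ for symmetric matrices gives $\#\{i:N_i>0\}\le 2$ and $\#\{i:N_i<0\}\le 2$. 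Thus at most four of the $N_i$ are nonzero, and since $n\ge 5$ the set $Z=\{i:N_i=0\}$ is nonempty. \emph{This is the only step that genuinely uses $n\ge 5$ rather than $n\ge 4$}: for $n=4$ the configuration ``two positive and two negative $N_i$'' is not excluded, which corresponds exactly to Cartan's conformally deformable four-dimensional examples and is why $n=4$ needs the separate treatment in this section.

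Next I would upgrade ``$Z\ne\emptyset$'' to ``all $N_i=0$''. If $v_i=0$ for some $i\in Z$, the identity gives $0=-\tfrac{1}{n-2}N_j$ for all $j$, so $N\equiv 0$, contradicting our assumption. Otherwise each $v_i$, $i\in Z$, is a nonzero null vector; two of them on different null lines would have nonzero pairing, so they all lie on one null line, say $v_i=(\lambda_i,\lambda_i)$ with $\lambda_i\ne 0$ for $i\in Z$. For $i\in Z$, $j\notin Z$ the identity reads $(\lambda_j-\bar\lambda_j)\bigl[\lambda_i+\tfrac{1}{n-2}(\lambda_j+\bar\lambda_j)\bigr]=0$, and since $\lambda_j\ne\bar\lambda_j$ for $j\notin Z$ we get $\lambda_i=-\tfrac{1}{n-2}(\lambda_j+\bar\lambda_j)$; hence all $\lambda_i$ ($i\in Z$) equal one nonzero value, and $\lambda_j+\bar\lambda_j$ equals one value $s\ne 0$ for all $j\notin Z$. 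Using \eqref{t} for $j,k\notin Z$ with $\lambda_j+\bar\lambda_j=\lambda_k+\bar\lambda_k=s$ then gives $(\lambda_j-\bar\lambda_j)+(\lambda_k-\bar\lambda_k)=0$ for distinct $j,k\notin Z$; if $|Z^{c}|\ge 3$ this forces $\lambda_j=\bar\lambda_j$, i.e.\ $j\in Z$, a contradiction, so $|Z^{c}|\le 2$, i.e.\ $|Z|\ge n-2$. But then $B$ has a principal curvature of multiplicity $\ge n-2$, against the hypothesis. The remaining possibility $|Z|=1$ is killed by the same computation: with $n-1\ge 3$ indices outside the singleton it forces all their $N_j$ to vanish, so $|Z|=n$. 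Hence all $N_i=0$.

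Finally, all $v_i$ are null and pairwise $\langle v_i,v_j\rangle=0$ by the identity; since at least one $v_i\ne 0$ (because $\sum_i\lambda_i^2\ne0$), all nonzero $v_i$ lie on a single null line and the rest are zero, so $\bar\lambda_i=\lambda_i$ for every $i$, or $\bar\lambda_i=-\lambda_i$ for every $i$. Thus $\bar B=\pm B$; changing the unit normal of $\bar f$ if necessary we have $\bar B=B$, and since also $g=\bar g$, Theorem~\ref{fundthe} shows $f$ and $\bar f$ are M\"obius equivalent. The delicate part is the case analysis in the third paragraph --- keeping track of the at most four indices outside $Z$ and extracting the forbidden multiplicity from them --- while the essential structural input is the signature bound $\#\{N_i>0\}\le 2$, $\#\{N_i<0\}\le 2$, which is exactly what makes $n\ge 5$ the right hypothesis.
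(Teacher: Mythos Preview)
Your proof is correct and takes a genuinely different route from the paper's.

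The paper argues directly with the affine relation \eqref{t1}: viewing the $p_j=(\lambda_j,\bar\lambda_j)$ as points in the plane, for fixed distinct $i,k$ with $\lambda_i\ne\lambda_k$ equation \eqref{t1} is a nontrivial affine equation in $(\lambda_j,\bar\lambda_j)$, so all remaining $p_j$ lie on a line. A short case split (highest multiplicity $<n-3$ versus $=n-3$) guarantees enough distinct indices to conclude that \emph{all} $p_j$ are collinear; then $\sum\lambda_j=\sum\bar\lambda_j=0$ and $\sum\lambda_j^2=\sum\bar\lambda_j^2$ force the line to be $\bar\lambda=\pm\lambda$. This is elementary and entirely pointwise.

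Your approach recasts \eqref{t} as the Gram identity $\langle v_i,v_j\rangle=-\tfrac{1}{n-2}(N_i+N_j)$ in $\mathbb{R}^{1,1}$ and extracts the key inertia bound $\#\{N_i>0\}\le 2$, $\#\{N_i<0\}\le 2$ via Weyl's inequality applied to $\tfrac{n}{n-2}\operatorname{diag}(N)=G+\tfrac{1}{n-2}(N\mathbf 1^T+\mathbf 1N^T)$. This is where $n\ge5$ enters cleanly, and it explains structurally why $n=4$ is exceptional (all four $N_i$ can be nonzero, two of each sign). The subsequent case analysis on $|Z^c|$ then forces a principal curvature of multiplicity $\ge n-2$, contradicting the hypothesis. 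One cosmetic remark: your sentence ``The remaining possibility $|Z|=1$\dots'' is redundant---$|Z|=1$ means $|Z^c|=n-1\ge 4$, which you already handled under $|Z^c|\ge 3$---and could simply be deleted.

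What each approach buys: the paper's collinearity argument is shorter and uses nothing beyond \eqref{t1} and elementary linear algebra, at the cost of a mild case split on multiplicities. Your Lorentzian/inertia argument is more conceptual, isolates precisely the role of $n\ge5$, and replaces the multiplicity case split by a uniform signature bound; it also makes transparent that the obstruction to $\bar B\ne\pm B$ is a rank/signature phenomenon rather than a combinatorial one.
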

\begin{proof}
We assert that there is a linear relation between
$\lambda_j$ and $\bar{\lambda}_j$, i.e. there exists constants
$b,c$ such that
\[\bar{\lambda}_j=b\lambda_j+c,~~\forall~1\le j\le n.\]
In other words, regard $p_j=(\lambda_j,\bar{\lambda}_j)$ as
coordinates of $n$ points on a plane, then these $n$ points
are collinear.

Without loss of generality assume that $\lambda_1\ne \lambda_2$.
We just show $p_3=(\lambda_3,\bar{\lambda}_3)$ is collinear with
$p_1=(\lambda_1,\bar{\lambda}_1),p_2=(\lambda_2,\bar{\lambda}_2)$.
(For any other index $j\ne 1,2,3$ the proof is the same.)
Now we need to consider two cases separately.

In the first case, $n\geq 5$ and the highest multiplicity of principal curvatures is less than $n-3$. We can find $\lambda_i\ne\lambda_k$
which are distinct from $\{1,2,3\}$. Fix $i,k$ in \eqref{t1},
we see that all other $(\lambda_j,\bar{\lambda}_j)$ ($j\ne i,k$)
satisfies a non-trivial linear equation \eqref{t1}.
In particular, $p_1,p_2,p_3$ are collinear.

In the second case, $\lambda_i$ might be a constant
for any indices $i\ne 1,2,3$.
(Note that $\lambda_i\ne\lambda_1,\lambda_2,\lambda_3$.
Otherwise there will be
a principal curvature of multiplicity at least $n-2$,
contradiction. Yet $\lambda_3$ might be equal to either of
$\lambda_1,\lambda_2$.) Fix $i=1,k=5$, we have
$\lambda_1\ne \lambda_5$ and by \eqref{t1}
we know
\[p_2=(\lambda_2,\bar{\lambda}_2),p_3=(\lambda_3,\bar{\lambda}_3),
p_4(\lambda_4,\bar{\lambda}_4)~ \text{are collinear}.\]
Similarly we know $\{p_1,p_2,p_4\}$ and $\{p_1,p_3,p_4\}$
are collinear triples. This guarantees that $\{p_1,p_2,p_3\}$
(and other $p_j$'s) are collinear and finishes the proof to
our assertion.

Now we know $\bar{\lambda}_j=b\lambda_j+c$ for constants $b,c$ and for any $j$. The fact $\sum_j \lambda_j=0=\sum_j \bar\lambda_j$
(the first identity in \eqref{equa6}) implies $c=0$.
Using the second identity $\sum_j \lambda_j^2=\frac{n-1}{n}=\sum_j \bar\lambda_j^2$ in \eqref{equa6} we conclude that $b=\pm 1$.
This completes the proof to Proposition~\ref{prop62}.
\end{proof}
\begin{PROPOSITION}\label{prop63}
The conclusion of Theorem~\ref{thm61} is valid when dimension $n=4$.
\end{PROPOSITION}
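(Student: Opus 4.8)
The plan is to run the argument of Proposition~\ref{prop62} one dimension lower, where the pointwise Gauss equation no longer suffices and the Codazzi equations must be brought in. Since $n=4$ and every principal curvature has multiplicity $<n-2=2$, the M\"{o}bius principal curvatures of $f$ are simple, so the block alternative of Theorem~\ref{th1} (which would produce an eigenvalue of multiplicity $\ge n-2$) is excluded, and $B,\bar B$ are simultaneously diagonalizable; fix a common orthonormal principal frame $\{E_1,\dots,E_4\}$ for $g$, with $B_{ij}=\lambda_i\delta_{ij}$ and $\bar B_{ij}=\bar\lambda_i\delta_{ij}$ (one checks from \eqref{t}, \eqref{t1} and the distinctness of the $\lambda_i$ that the $\bar\lambda_i$ are distinct too). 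Since $\tfrac1{n-2}=\tfrac12$ when $n=4$, the Gauss relation \eqref{t} collapses to
\[
(\lambda_i+\lambda_j)^2=(\bar\lambda_i+\bar\lambda_j)^2,\qquad\text{i.e.}\qquad \lambda_i+\lambda_j=\epsilon_{ij}(\bar\lambda_i+\bar\lambda_j),\quad \epsilon_{ij}=\pm1,\ i\ne j.
\]

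First I would settle the purely pointwise part combinatorially. View $(\epsilon_{ij})$ as a two-colouring of the edges of $K_4$, and put $u_i=\lambda_i-\bar\lambda_i$, $v_i=\lambda_i+\bar\lambda_i$, so that a $(+)$-edge $ij$ says $u_i=-u_j$ and a $(-)$-edge says $v_i=-v_j$. If the $(+)$-subgraph contains a triangle, the three $u$'s on it vanish; then the edges from the fourth vertex, together with $\sum_i\lambda_i=0=\sum_i\bar\lambda_i$ and the fact that two $(-)$-edges at that vertex would equate two distinct $\lambda_i$, force $u_i\equiv0$, i.e.\ $\bar B=B$. Symmetrically, a triangle in the $(-)$-subgraph gives $\bar B=-B$ (after reversing the unit normal of $\bar f$). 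The colourings with no monochromatic triangle are, up to a relabelling of the $E_i$ and a normal reversal, the two exceptional ones: (a) a $4$-cycle of $(+)$-edges with its complementary matching of $(-)$-edges; (b) two complementary Hamilton paths. In either case, combining the two diagonal equations with the trace normalization shows that $\bar B$ equals $B$ (resp.\ $-B$) with the frame permuted by a product of two disjoint transpositions; e.g.\ $(\bar\lambda_1,\bar\lambda_2,\bar\lambda_3,\bar\lambda_4)=(-\lambda_3,-\lambda_4,-\lambda_1,-\lambda_2)$ in case~(a), while case~(b) additionally forces $\lambda_3=-\lambda_2$, $\lambda_4=-\lambda_1$ and $(\bar\lambda_1,\bar\lambda_2,\bar\lambda_3,\bar\lambda_4)=(\lambda_2,\lambda_1,-\lambda_1,-\lambda_2)$.

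The crux is to eliminate (a) and (b), and here the Gauss equation alone cannot help — this is precisely the phenomenon behind Cartan's conformally deformable hypersurfaces in $R^5$ with four distinct principal curvatures. The additional rigidity comes from the fact that $f$ and $\bar f$ share the M\"{o}bius \emph{metric}, hence the same Levi-Civita connection $\omega_{ij}=\sum_k\Gamma^k_{ij}\omega_k$. Diagonality of $B$ in $\{E_i\}$ gives $(\lambda_i-\lambda_j)\Gamma^k_{ij}=B_{ij,k}$, and \eqref{equa3} gives $B_{ij,k}=B_{ik,j}$ for pairwise distinct $i,j,k$; the same identities hold for $\bar f$ with the \emph{same} $\Gamma^k_{ij}$. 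Hence, for pairwise distinct $i,j,k$ and wherever $\Gamma^k_{ij}\ne0$,
\[
\frac{\lambda_i-\lambda_j}{\lambda_i-\lambda_k}=\frac{\Gamma^j_{ik}}{\Gamma^k_{ij}}=\frac{\bar\lambda_i-\bar\lambda_j}{\bar\lambda_i-\bar\lambda_k}.
\]
Substituting the signed-permutation relations of case~(a) or~(b) into this identity, for a suitable choice of $i,j,k$ with $\Gamma^k_{ij}\ne0$, and using $\sum_i\lambda_i=0$, either yields $\bar\lambda_i=\lambda_i$ for all $i$ (case~(a)) or produces a relation among the $\lambda_i$ incompatible with their distinctness (case~(b)); thus near a generic point the exceptional configurations collapse to $\bar B=\pm B$, except possibly on the degenerate locus where $B_{ij,k}=0$ for all pairwise distinct $i,j,k$. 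On such a locus each $\omega_{ij}$ lies in $\mathrm{span}\{\omega_i,\omega_j\}$, and I would finish by feeding this back into the remaining integrability equations \eqref{equa1}, \eqref{equa2} for the Blaschke tensor $A$ and the M\"{o}bius form $C$ — recall that $R_{ijkl}$, hence $A_{ij}$ for $i\ne j$ together with $\operatorname{tr}A$, are common to $f$ and $\bar f$ — which over-determines a hypersurface carrying four distinct principal curvatures and again gives $\bar B=\pm B$. Once $\bar B=\pm B$ everywhere, reversing the normal of $\bar f$ if needed yields $\bar B=B$, and Theorem~\ref{fundthe} (with $\varphi=\mathrm{id}$) shows that $f$ is M\"{o}bius congruent to $\bar f$. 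The main obstacle, and the heart of the matter, is exactly the elimination of (a) and (b): in dimension four the pointwise Gauss equation admits the spurious signed-permutation solutions, and only the rigidity transmitted through the shared connection — i.e.\ the Codazzi equations — rules them out.
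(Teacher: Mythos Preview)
Your pointwise reduction is fine and essentially matches the paper: when $n=4$ the relation $(\lambda_i+\lambda_j)^2=(\bar\lambda_i+\bar\lambda_j)^2$ forces $\bar B$ to be either $\pm B$ or a signed double-transposition of $B$, which the paper lists as its cases (2)--(4).

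The gap is in your elimination step. Your primary tool, the ratio identity
\[
\frac{\lambda_i-\lambda_j}{\lambda_i-\lambda_k}=\frac{\bar\lambda_i-\bar\lambda_j}{\bar\lambda_i-\bar\lambda_k}
\quad\text{whenever }\Gamma^k_{ij}\ne0,
\]
is never available in the exceptional cases. In fact the very relations you write down --- e.g.\ $\bar B_{12,k}=-B_{12,k}$ from $(\bar\lambda_1-\bar\lambda_2)=-(\lambda_1-\lambda_2)$, together with Codazzi $B_{12,3}=B_{13,2}$, $\bar B_{12,3}=\bar B_{13,2}$ --- force $B_{ij,k}=0$ for every triple of distinct indices; this is exactly what the paper records as \eqref{43}. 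So your ``degenerate locus'' is not a thin set to be cleaned up afterward: it is the whole of $M$, and the ratio identity contributes nothing. What you call the side case is in fact the entire problem.

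On that locus your plan becomes the sentence ``feed this into \eqref{equa1}, \eqref{equa2}, which over-determines the system'', and that is not a proof. The paper carries out a genuinely substantial computation here: from $B_{ij,k}=0$ (distinct $i,j,k$) one solves for all remaining $B_{ii,j}$ in terms of the $\lambda_k$ and the quantities $\Delta_k=C_k/\lambda_k$, writes all six connection forms explicitly, then computes the four sectional curvatures $R_{1313},R_{2424},R_{1414},R_{2323}$ and observes that the alternating sum
\[
R_{1313}+R_{2424}-R_{1414}-R_{2323}
\]
vanishes identically from the connection side. On the other hand, the Gauss equation \eqref{equa4} gives this same sum as $(\lambda_1-\lambda_2)(\lambda_3-\lambda_4)$, contradicting distinctness. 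None of this is captured by ``over-determination'', and it does not proceed via \eqref{equa1}--\eqref{equa2} as you suggest; the Blaschke tensor $A$ enters only through \eqref{equa4} at the very end. You should replace the hand-wave with this explicit curvature identity (or an equivalent one).
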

\begin{proof}
First note that when $n=4$ and the highest multiplicity is
less than $n-2=2$, four principal curvatures of $f$ are distinct. Consider
\begin{equation*}
\{B_{ij}\}=\text{diag}(\lambda_1,\lambda_2,\lambda_3,\lambda_4);~
\{\bar{B}_{ij}\}=\text{diag}(\bar{\lambda}_1,\bar{\lambda}_2,
\bar{\lambda}_3,\bar{\lambda}_4).
\end{equation*}
By \eqref{t} and $n=4$ we have
\begin{equation}\label{pm}
\lambda_i+\lambda_j=\pm(\bar\lambda_i+\bar\lambda_j),~
i\ne j.
\end{equation} We assert that there are four possibilities on
each and every point of $M$:
\begin{equation*}
\begin{split}
&(1)\;\; B=\pm \bar{B};\\
&(2)\;\;
\{B_{ij}\}=\text{diag}(\lambda_1,\lambda_2,\lambda_3,\lambda_4),\{\bar{B}_{ij}\}=\text{diag}(\pm\lambda_2,\pm\lambda_1,\pm\lambda_4,\pm\lambda_3);\\
&(3)\;\;
\{B_{ij}\}=\text{diag}(\lambda_1,\lambda_2,\lambda_3,\lambda_4),\{\bar{B}_{ij}\}=\text{diag}(\pm\lambda_3,\pm\lambda_4,\pm\lambda_1,\pm\lambda_2);\\
&(4)\;\;
\{B_{ij}\}=\text{diag}(\lambda_1,\lambda_2,\lambda_3,\lambda_4),\{\bar{B}_{ij}\}=\text{diag}(\pm\lambda_4,\pm\lambda_3,\pm\lambda_2,\pm\lambda_1).
\end{split}
\end{equation*}
Suppose $B\ne\pm \bar{B}$. Consider a special case
\[ \lambda_1+\lambda_2=(\bar\lambda_1+\bar\lambda_2),~
\lambda_1+\lambda_3=-(\bar\lambda_1+\bar\lambda_3),~
\lambda_1+\lambda_4=-(\bar\lambda_2+\bar\lambda_3).\]
Taking sum of these three equalities and using the fact
$\sum_j \lambda_j=0=\sum_j \bar\lambda_j$ we get
$\lambda_1=\bar\lambda_2$. Substitute this back and use
$\sum_j \lambda_j=0=\sum_j \bar\lambda_j$ again. We
conclude that this is case $(2)$. Reversing either of the
normal vector fields and taking permutations reduce other
possibilities to this special case. This verifies our assertion.

We need to exclude possibility $(2)$ by contradiction.
Other cases are similar.
This time $\lambda_1+\lambda_2=0$ automatically implies
$B=\pm\bar{B}$ by $\sum_j \lambda_j=0=\sum_j \bar\lambda_j$.
So we need only to find contradiction when $\lambda_1\ne\pm\lambda_2, \lambda_3\ne\pm\lambda_4$ and
\[\{B_{ij}\}=\text{diag}(\lambda_1,\lambda_2,\lambda_3,\lambda_4),
\{\bar{B}_{ij}\}
=\text{diag}(\pm\lambda_2,\pm\lambda_1,\pm\lambda_4,\pm\lambda_3),\]
under a locally
orthonormal basis $\{E_1,\cdots,E_4\}$ with respect to $g$.

Using the covariant derivative of $B$ and $\bar{B}$, we get
\begin{equation}\label{40}
(\lambda_i-\lambda_j)\omega_{ij}=\sum_kB_{ij,k}\omega_k,~~
(\bar{\lambda}_i-\bar{\lambda}_j)\omega_{ij}
=\sum_k\bar{B}_{ij,k}\omega_k,  ~~i\ne j.
\end{equation}
So
\begin{equation}\label{41}
(\bar{\lambda}_i-\bar{\lambda}_j)B_{ij,k}
=(\lambda_i-\lambda_j)\bar{B}_{ij,k},~~~i\ne j.
\end{equation}
Consequently, there is
\begin{equation}\label{42}
B_{12,k}=-\bar{B}_{12,k},~~B_{34,k}=-\bar{B}_{34,k},
\end{equation}
because $\bar{\lambda}_1-\bar{\lambda}_2
=\lambda_2-\lambda_1\ne 0,\bar{\lambda}_3-\bar{\lambda}_4
=\lambda_4-\lambda_3\ne 0$.
It follows that
\begin{equation}\label{43}
B_{ij,k}=\bar{B}_{ij,k}=0,~~\text{when $i,j,k$ are distinct}.
\end{equation}
To verify \eqref{43}, consider the case when $i=1,j=2,k=3$.
If $B_{12,3}\ne 0$, then $B_{13,2}=-\bar{B}_{13,2}\ne 0$.
(Note $B_{ij,k}=B_{ik,j}$ when $i,j,k$ are distinct
by \eqref{equa3}.) Combined with \eqref{41}, there should be
$\lambda_1-\lambda_3=\bar{\lambda}_3-\bar{\lambda}_1
=\lambda_4-\lambda_2$. Yet this implies $\lambda_1+\lambda_2=0$
under our condition $\lambda_1+\lambda_2+\lambda_3+\lambda_4=0$,
which contradicts the assumption
$\lambda_1\ne\pm\lambda_2$. Other cases are verified similarly.
As a corollary of \eqref{40} and \eqref{43},
\begin{equation}\label{44}
(\lambda_i-\lambda_j)\omega_{ij}
=B_{ij,i}\omega_i +B_{ij,j}\omega_j, ~~
(\bar{\lambda}_i-\bar{\lambda}_j)\omega_{ij}
=\bar{B}_{ij,i}\omega_i+\bar{B}_{ij,j}\omega_j,  ~~i\ne j.
\end{equation}

To derive the exact expressions of the connection forms
$\omega_{ij}$, we shall compute out all the
quantities like $B_{ii,j}$ in terms of $\lambda_k$'s and $C_k$'s.
By the symmetry in our situation, obviously there is
$\bar{B}_{33,1}=B_{44,1},\bar{B}_{44,1}=B_{33,1}$.
Together with \eqref{41} and \eqref{equa3}, it follows
\[
(\bar{\lambda}_3-\bar{\lambda}_1)(B_{33,1}-C_1)
=(\bar{\lambda}_3-\bar{\lambda}_1)B_{31,3}
=(\lambda_3-\lambda_1)\bar{B}_{31,3}
=(\lambda_3-\lambda_1)(\bar{B}_{33,1}-\bar{C}_1).
\]
So we get
\begin{equation}\label{45}
\begin{split}
&(\lambda_4-\lambda_2)(B_{33,1}-C_1)
=(\lambda_3-\lambda_1)(B_{44,1}-\bar{C}_1),\\
&(\lambda_3-\lambda_2)(B_{44,1}-C_1)
=(\lambda_4-\lambda_1)(B_{33,1}-\bar{C}_1).
\end{split}
\end{equation}
The second equation is obtained in the similar way.

On the other hand, \eqref{42} tells us that
\[
B_{22,1}-C_1=B_{12,2}=-\bar{B}_{12,2}
=\bar{B}_{22,1}-\bar{C}_1=B_{11,1}-\bar{C}_1.
\]
Thus $B_{11,1}+B_{22,1}=C_1+\bar{C}_1$.
Note that $\sum_i\lambda_i=\sum_iB_{ii}=0$ and
$\sum_iB_{ii}^2=\frac{n-1}{n}$ imply
\begin{equation}\label{46}
\sum_{i=1}^4 B_{ii,k}=0,~~
\sum_{i=1}^4 \lambda_i B_{ii,k}=0,~~\forall ~k.
\end{equation}
In particular,
\begin{equation}\label{47}
B_{33,1}+B_{44,1}=-(B_{11,1}+B_{22,1})=-C_1-\bar{C}_1.
\end{equation}
Eliminating $B_{33,1},B_{44,1}$ from \eqref{47}\eqref{45}
(keep in mind $\sum_i\lambda_i=0$) yields
$\lambda_2 C_1=\lambda_1 \bar{C}_1$. Because
$\lambda_1,\lambda_2$ could not be zero at the same time
($\lambda_1\ne\pm\lambda_2$), we may denote
\begin{equation}\label{48}
\Delta_1:=\frac{C_1}{\lambda_1}=\frac{\bar{C}_1}{\lambda_2}.
\end{equation}
In case that $\lambda_1=0\ne\lambda_2$, there must be $C_1=0$,
and we need only to take $\Delta_1=\frac{\bar{C}_1}{\lambda_2}$
which is well-defined.

Putting \eqref{48} into \eqref{45}\eqref{47} solves
$B_{33,1},B_{44,1}$. Then by \eqref{46} we get the
complete solution:
\begin{equation}\label{49}
\begin{split}
&B_{11,1}
=\frac{\lambda_2\lambda_3+\lambda_2\lambda_4-\lambda_3^2-\lambda_4^2}
{\lambda_1-\lambda_2}\Delta_1,~
B_{33,1}=\lambda_3 \Delta_1,\\
&B_{22,1}
=\frac{\lambda_1\lambda_3+\lambda_1\lambda_4-\lambda_3^2-\lambda_4^2}
{\lambda_2-\lambda_1}\Delta_1,~
B_{44,1}=\lambda_4 \Delta_1.
\end{split}
\end{equation}
Similarly there are:
\begin{equation}\label{50}
\begin{split}
&B_{11,2}
=\frac{\lambda_2\lambda_3+\lambda_2\lambda_4-\lambda_3^2-\lambda_4^2}
{\lambda_1-\lambda_2}\Delta_2,~
B_{33,2}=\lambda_3 \Delta_2,\\
&B_{22,2}
=\frac{\lambda_1\lambda_3+\lambda_1\lambda_4-\lambda_3^2-\lambda_4^2}
{\lambda_2-\lambda_1}\Delta_2,~
B_{44,1}=\lambda_4 \Delta_2,\\
&B_{11,3}=\lambda_1 \Delta_3,~
B_{33,3}
=\frac{\lambda_4\lambda_1+\lambda_4\lambda_2-\lambda_1^2-\lambda_2^2}
{\lambda_3-\lambda_4}\Delta_3,\\
&B_{22,3}=\lambda_2 \Delta_3,~
B_{44,3}
=\frac{\lambda_3\lambda_1+\lambda_3\lambda_2-\lambda_1^2-\lambda_2^2}
{\lambda_4-\lambda_3}\Delta_3,\\
&B_{11,4}=\lambda_1 \Delta_4,~
B_{33,4}
=\frac{\lambda_4\lambda_1+\lambda_4\lambda_2-\lambda_1^2-\lambda_2^2}
{\lambda_3-\lambda_4}\Delta_4,\\
&B_{22,4}=\lambda_2 \Delta_4,~
B_{44,4}
=\frac{\lambda_3\lambda_1+\lambda_3\lambda_2-\lambda_1^2-\lambda_2^2}
{\lambda_4-\lambda_3}\Delta_4,
\end{split}
\end{equation}
where
\[
\Delta_2:=\frac{C_2}{\lambda_2}=\frac{\bar{C}_2}{\lambda_1},~
\Delta_3:=\frac{C_3}{\lambda_3}=\frac{\bar{C}_3}{\lambda_4},~
\Delta_4:=\frac{C_4}{\lambda_4}=\frac{\bar{C}_4}{\lambda_3}.
\]
Now the connection forms $\omega_{ij}$ could be determined.
Since $\lambda_1-\lambda_2\ne 0$, by \eqref{44}\eqref{equa3}
\eqref{49}\eqref{50} and $C_1=\lambda_1\Delta_1,
C_2=\lambda_2\Delta_2, \sum_i\lambda_i=0$, we get
\[
\omega_{12}
=\frac{B_{11,2}-C_2}{\lambda_1-\lambda_2}\omega_1
+\frac{B_{22,1}-C_1}{\lambda_1-\lambda_2}\omega_2
=I_{12}(\Delta_2 \omega_1-\Delta_1 \omega_2),~~
I_{12}:=-\frac{2\lambda_1\lambda_2+\lambda_3^2+\lambda_4^2}
{(\lambda_1-\lambda_2)^2}.
\]
Similarly, there is
\[
\omega_{34}
=I_{34}(\Delta_4 \omega_3-\Delta_3 \omega_4),~~
I_{34}:=-\frac{2\lambda_3\lambda_4+\lambda_1^2+\lambda_2^2}
{(\lambda_3-\lambda_4)^2}.
\]
Other connection forms are found in the same way, yet much easier:
\begin{equation*}
\begin{split}
&\omega_{13}=\Delta_3 \omega_1-\Delta_1 \omega_3,~~
\omega_{24}=\Delta_4 \omega_2-\Delta_2 \omega_4, \\
&\omega_{14}=\Delta_4 \omega_1-\Delta_1 \omega_4,~~
\omega_{23}=\Delta_3 \omega_2-\Delta_2 \omega_3.
\end{split}
\end{equation*}
Finally, by the formula
$d\omega_{ij}-\sum_l\omega_{il}\wedge\omega_{jl}
=-\frac{1}{2}R_{ijkl}~\omega_k\wedge\omega_l,$
the sectional curvatures are computed out:
\begin{equation*}
\begin{split}
&\tfrac{1}{2}R_{1313}
=-E_1(\Delta_1)-E_3(\Delta_3)
+\Delta_1^2+\Delta_3^2
+I_{12}\Delta_2^2+I_{34}\Delta_4^2,\\
&\tfrac{1}{2}R_{2424}
=-E_2(\Delta_2)-E_4(\Delta_4)
+\Delta_2^2+\Delta_4^2
+I_{12}\Delta_1^2+I_{34}\Delta_3^2,\\
&\tfrac{1}{2}R_{1414}
=-E_1(\Delta_1)-E_4(\Delta_4)
+\Delta_1^2+\Delta_4^2
+I_{12}\Delta_2^2+I_{34}\Delta_3^2,\\
&\tfrac{1}{2}R_{2323}
=-E_2(\Delta_2)-E_3(\Delta_3)
+\Delta_2^2+\Delta_3^2
+I_{12}\Delta_1^2+I_{34}\Delta_4^2.
\end{split}
\end{equation*}
Here $E_i(\Delta_j)$ is understood as the action of tangent
vector $E_i$ on the function $\Delta_j$, and $\Delta_i^2$
is the square of $\Delta_i$.
As a corollary,
\[
R_{1313}+R_{2424}-R_{1414}-R_{2323}=0.
\]
But on the other hand, \eqref{equa4} implies
$R_{ijij}=\lambda_i\lambda_j+A_{ii}+A_{jj}$ when $i\ne j$.
Substitute this into the final result above, we find
\[
R_{1313}+R_{2424}-R_{1414}-R_{2323}
=(\lambda_1-\lambda_2)(\lambda_3-\lambda_4)=0.
\]
This contradicts our assumption
$\lambda_1\ne\pm\lambda_2, \lambda_3\ne\pm\lambda_4$.
Thus we have proved that the possibilities other than
$B=\pm \bar{B}$ could not happen. This completes the proof to Proposition \ref{prop63}.
\end{proof}

\vskip 0.5 cm
\section{Deformable hypersurfaces with one principal curvature of multiplicity $n-1$}

In this section and the next one we
make use of the following convention on the range of indices:
  $$1\leq i,j,k \leq n;\ 3\leq \alpha,\beta,\gamma \leq n.$$
\begin{PROPOSITION}\label{pro32}
Let $f,\bar{f}: M^n\rightarrow R^{n+1}(n\geq 4)$ be two
hypersurfaces without umbilics.
Suppose that their M\"{o}bius metrics are equal,
and one principal curvature of $B$ has multiplicity
$n-1$ everywhere (this means that $(M^n,g)$ is
conformally flat).
Then either $f(M^n)$ is M\"{o}bius congruent to $\bar{f}(M^n)$,
or $f(M^n)$ has constant M\"{o}bius curvature.
\end{PROPOSITION}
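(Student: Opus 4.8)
The plan is to make $B$ and $\bar B$ as diagonal as the algebra allows, read the Möbius form off the structure equations, and then split according to whether $B$ and $\bar B$ admit a common eigenbasis. Since $f$ has no umbilics and one principal curvature has multiplicity $n-1$, the normalizations \eqref{equa6} force the eigenvalues of $(B_{ij})$ to be exactly $\tfrac{n-1}{n}$ (simple) and $-\tfrac1n$ (multiplicity $n-1$), up to reversing the unit normal; the same holds for $(\bar B_{ij})$. Let $D_1,\bar D_1$ be the line fields spanned by the simple principal directions of $f,\bar f$. First I would record a structure lemma for any such hypersurface: in a principal orthonormal frame $\{E_1,\dots,E_n\}$ for $g$ with $E_1$ spanning $D_1$, applying the defining relation $dB_{ij}+\sum_mB_{mj}\omega_{mi}+\sum_mB_{im}\omega_{mj}=\sum_kB_{ij,k}\omega_k$ together with \eqref{equa3} gives $C_2=\dots=C_n=0$ (the Möbius form is supported on $D_1$) and $\omega_{1a}=-C_1\,\omega_a$ for $2\le a\le n$; in particular $C_1\equiv0$ is equivalent to $E_1$ being $g$-parallel. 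Since $g=\bar g$ is conformally flat, Remark~\ref{rem-dim3} gives $S=\bar S$, so Theorem~\ref{th1} applies to $B,\bar B$ and leaves two alternatives.

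In the first alternative $B$ and $\bar B$ are diagonal in one common orthonormal frame. If $D_1=\bar D_1$, the common frame makes $\bar B=B$ (after reversing the normal of $\bar f$ if necessary), so the identity map preserves $g$ and $B$ and Theorem~\ref{fundthe} yields that $f$ is Möbius congruent to $\bar f$. If $D_1\ne\bar D_1$, a common diagonalizing basis must contain both simple directions, so I may take $E_1$ spanning $D_1$ and $E_2$ spanning $\bar D_1$; the structure lemma applied to $f$ gives $\omega_{12}=-C_1\omega_2$, while applied to $\bar f$ (simple index $2$) it gives $\omega_{12}=\bar C_2\omega_1$, and since $\omega_1,\omega_2$ are independent we conclude $C\equiv\bar C\equiv0$. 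Then $E_1$ and $E_2$ are orthogonal $g$-parallel unit fields, so every component of the curvature tensor of $g$ carrying at least one index equal to $1$ or $2$ vanishes; substituting this into the Gauss equation \eqref{equa4} pins down $A_{11},A_{22}$ and each $A_{\alpha\alpha}$ ($\alpha\ge3$) as explicit constants and forces $R_{\alpha\beta\alpha\beta}=B_{\alpha\alpha}B_{\beta\beta}+A_{\alpha\alpha}+A_{\beta\beta}=0$. Hence $(M^n,g)$ is flat, i.e., $f$ has constant Möbius curvature.

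The second alternative of Theorem~\ref{th1} is the substantive case. After reversing normals I may assume $\bar B=\mathrm{diag}(\tfrac{n-1}{n},-\tfrac1n,\dots,-\tfrac1n)$ in a frame $\{E_1,\dots,E_n\}$ (so $\bar D_1=\mathbb{R}E_1$), while $B$ is a nontrivial symmetric $2\times2$ block in the $E_1E_2$-plane plus $-\tfrac1n I_{n-2}$; the eigenvalue constraints give $B_{11}+B_{22}=\tfrac{n-2}{n}$, $B_{11}B_{22}-B_{12}^2=-\tfrac{n-1}{n^2}$ with $B_{12}\ne0$. Thus $B$ is diagonalized in a rotated frame $\{\hat E_1,\hat E_2,E_3,\dots,E_n\}$, where $\hat E_1$ spans $D_1\ne\bar D_1$ and the rotation angle $\phi$ in the $E_1E_2$-plane satisfies $\sin\phi\ne0$. (A quick check shows the algebraic identity $S=\bar S$ puts no constraint on $B_{12}$, so this configuration cannot be excluded by Theorem~\ref{th1} alone and the differential structure must be used.)

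My plan here is to apply the structure lemma to $f$ in its own frame $\{\hat E_i\}$ and to $\bar f$ in $\{E_i\}$, and then use the transformation law of connection forms under the $\phi$-rotation to express $\omega_{12},\omega_{1\alpha},\omega_{2\alpha}$ and $d\phi$ in terms of the two scalars $\bar C_1,\hat C_1$ and $\phi$. Feeding these into the curvature structure equations $d\omega_{ij}-\sum_k\omega_{ik}\wedge\omega_{kj}=-\tfrac12\sum_{kl}R_{ijkl}\,\omega_k\wedge\omega_l$ and matching against the Gauss equation \eqref{equa4} for $f$ (in $\hat B,\hat A$) and for $\bar f$ (in $\bar B,\bar A$), together with the Codazzi equations \eqref{equa1}, \eqref{equa3} and the normalizations \eqref{equa6}, should determine all the unknowns and force every sectional curvature of $g$ to be one and the same constant. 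The hard part will be organizational: isolating exactly enough scalar consequences of this Gauss–Codazzi system in the two rotated frames to conclude that the Möbius curvature is constant without drowning in the computation. A promising intermediate reduction is to first prove $\hat C_1=\bar C_1=0$, so that by Theorem~\ref{retheorem2} both $f$ and $\bar f$ are reducible with Möbius metric of warped form $\kappa^2(ds^2+I^{n-1}_{-\epsilon})$, and then argue that a metric admitting two such warped-product structures with distinct distinguished directions must have constant curvature.
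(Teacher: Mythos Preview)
Your overall setup and your handling of the simultaneously diagonalizable case (your ``first alternative'') are sound and close in spirit to the paper's Case~I, though you reach the conclusion by a slightly different route: the paper observes that $C\equiv\bar C\equiv0$ makes both $f$ and $\bar f$ M\"obius isoparametric with two principal curvatures, hence both congruent to the circular cylinder and therefore to each other, whereas you argue directly that $(M^n,g)$ is flat. Both conclusions satisfy the dichotomy in the statement.

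The genuine gap is in your second alternative. Your proposed ``promising intermediate reduction''---to show $\hat C_1=\bar C_1=0$ and then invoke Theorem~\ref{retheorem2}---cannot work. Hypersurfaces of constant M\"obius sectional curvature almost never have vanishing M\"obius form: by the explicit computations in Section~4 (see \eqref{rre12}, \eqref{rre13}, \eqref{rre14}) one has $C_1=-\kappa_s/\kappa^2$, which vanishes only when the underlying curve has constant geodesic curvature, i.e.\ only for the circular cylinder. In every other example in the classification Theorem~\ref{them1}, $C_1\not\equiv0$. So the target of your reduction is false in general, and the argument would stall there.

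What the paper actually does in this case is quite different and does not pass through $C\equiv0$. One works in the frame diagonalizing $B$ (so $\{B_{ij}\}=\mathrm{diag}(\tfrac{n-1}{n},-\tfrac1n,\dots,-\tfrac1n)$), parametrizes the off-diagonal block of $\bar B$ by an angle $\theta$, and combines the Codazzi equations for $\bar B$, the Ricci identities for second covariant derivatives of $\bar B$, and the relation $\sin\theta\,\bar C_1=\cos\theta\,\bar C_2$ to produce two independent linear relations between $R_{1\alpha1\alpha}-R_{2\alpha2\alpha}$ and $R_{1\alpha2\alpha}$. These force
\[
R_{1\alpha1\alpha}=R_{2\alpha2\alpha},\qquad R_{1\alpha2\alpha}=0,
\]
after which \eqref{equa4} yields $R_{2\alpha2\alpha}=R_{\alpha\beta\alpha\beta}$; together with Lemma~\ref{lemma54} all sectional curvatures coincide, and Schur's theorem finishes. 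So the computation you anticipated is indeed needed, but its aim should be these curvature identities rather than the vanishing of $\Phi$.
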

\begin{proof}  Since one of principal
curvatures of $B$ has multiplicity $n-1$ everywhere,  from the
algebraic Theorem \ref{th1}, locally we can choose an orthonormal basis
$\{E_1,\cdots,E_n\}$ with respect to $g$ such that
\begin{itemize}
\item[]
$\{\bar{B}_{ij}\}=\text{diag}\bigl( \Big[\begin{smallmatrix}
\bar{B}_{11}& \bar{B}_{12}\\
\bar{B}_{21}& \bar{B}_{22}
\end{smallmatrix}\Big],
\bar{\mu},\cdots,\bar{\mu}\bigr), \{B_{ij}\} =\text{diag}(\lambda,\mu,
\cdots,\mu),$ .
\end{itemize}
where $\lambda\neq\mu$. From (\ref{equa6}) we have $\lambda+(n-1)\mu=0$ and
$\lambda^2+(n-1)\mu^2=\frac{n-1}{n}$. So
\begin{equation*}
\{B_{ij}\} =\text{diag}\left(\frac{n-1}{n},\frac{-1}{n}, \cdots,\frac{-1}{n}\right).
\end{equation*}

Since $\bar{\mu}=\pm\mu$, up to change of the normal direction
we may assume $\bar{\mu}=\mu=\frac{-1}{n}$.
Apply formula \eqref{equa6} to $\bar{B}_{ij}$. We know that the
sub-matrices
\[
\begin{pmatrix}
\bar{B}_{11}& \bar{B}_{12}\\
\bar{B}_{21}& \bar{B}_{22}
\end{pmatrix}
\quad\text{and}\quad
\begin{pmatrix}
\frac{n-1}{n} & 0\\
0 & \frac{-1}{n}
\end{pmatrix}
\]
have equal traces and equal norms, and $B,\bar{B}$
share equal eigenvalues. At any point of $M^n$ there exists a suitable $P=\begin{pmatrix}
\cos\theta & \sin\theta\\
-\sin\theta & \cos\theta
\end{pmatrix}\in SO(2)$
such that
\begin{equation}\label{2b2}
\begin{pmatrix}
\bar{B}_{11}& \bar{B}_{12}\\
\bar{B}_{21}& \bar{B}_{22}
\end{pmatrix}
= P^{-1}\begin{pmatrix}
\frac{n-1}{n} & 0\\
0 & \frac{-1}{n}
\end{pmatrix}P=
\begin{pmatrix}
\cos^2\theta-\frac{1}{n} & \cos\theta\sin\theta \\
\cos\theta\sin\theta  & \sin^2\theta-\frac{1}{n}
\end{pmatrix}.
\end{equation}

We summarize some intermediate results as
\begin{lemma}\label{lemma54}
For hypersurface $f$, the coefficients of tensor $B,\nabla B,C$
under the orthonormal basis $\{E_1,\cdots,E_n\}$ satisfy
\begin{equation*}
\begin{split}
&B_{1j,j}=-C_1, j>1; ~~~~\text{otherwise,} B_{ij,k}=0.\\
&C_k=0,k>1;~~~~~\omega_{1j}=-C_1\omega_j, j>1.\\
&R_{1i1i}-C_{1,1}+C_1^2=0, j>1;~~~~R_{1iji}-C_{1,j}=0,j>1.
\end{split}
\end{equation*}
where $\{\omega_1,\cdots,\omega_n\}$ are the dual basis,
and $\{\omega_{ij}\}$ are its connection forms.
\end{lemma}
\begin{proof} From
$dB_{ij}+\sum_kB_{kj}\omega_{ki}+B_{ik}\omega_{kj}=\sum_kB_{ij,k}\omega_k$
and (\ref{equa3}) we get the first four equalities.
From
$d\omega_{1i}-\sum_k\omega_{1k}\wedge\omega_{ki}
=-\frac{1}{2}\sum_{kl}R_{1ikl}\omega_k\wedge\omega_l$
and invoking the proved equalities we get the equalities on the curvature tensor.
\end{proof}
In order to prove Proposition (\ref{pro32}) We have to consider the following two cases:

{\bf Case I}, $\bar{B}_{12}\equiv 0$;

{\bf Case II}, $\bar{B}_{12}\neq 0$.

First we consider Case I. Since
$\bar{B}_{12}=-\cos\theta\sin\theta\equiv 0$, so $\sin\theta=0$ or
$\cos\theta=0$. If $\sin\theta=0$, then $\bar{B}=B$, thus  $f$ is
M\"{o}bius congruent to $\bar{f}$.  Next we assume that
$\cos\theta=0$. From (\ref{2b2}) we get
\begin{equation*}
(\bar{B}_{ij})=\text{diag}(\mu,\lambda,\mu,\cdots,\mu)
=\text{diag}\left(\frac{-1}{n},\frac{n-1}{n},\frac{-1}{n},
\cdots,\frac{-1}{n}\right).
\end{equation*}

For hypersurface $\bar{f}:M^n\rightarrow S^{n+1}$, since
$g=\bar{g}$, we may use the same dual basis $\{\omega_i\}$ with the same connection forms. Similar to Lemma~\ref{lemma54} we get
\begin{equation*}
\begin{split}
&\bar{B}_{2i,i}=-\bar{C}_2;~~\text{otherwise} ~~\bar{B}_{ij,k}=0;\\
&\omega_{1j}=-\bar{C}_2\omega_j,j\neq 2;~~\bar{C}_i=0,i\neq2.
\end{split}
\end{equation*}
Compared with Lemma~\ref{lemma54} we see
$-C_1\omega_2=\omega_{12}=-\omega_{21}=\bar{C}_2\omega_1.$
Therefore, $C_1=\bar{C}_2=0$, and the M\"{o}bius forms of both
$f$ and $\bar{f}$ vanish:
\begin{equation*}
C=0,~~~\bar{C}=0.
\end{equation*}
Thus both $f$ and $\bar{f}$ are M\"{o}bius isoparametric
hypersurfaces with two distinct principal curvatures. Since
$\omega_{1i}=\omega_{2i}=0$ and $R_{1212}=0$, from \cite{lw},
$f$ and $\bar{f}$ are M\"{o}bius equivalent to the
circular cylinder $S^1(1)\times R^{n-1}\subset R^{n+1}$,
hence congruent to each other. This completes the
proof to Proposition \ref{pro32} for Case I. (In particular this is not a M\"{o}bius deformable case.)

Next we consider Case II, $\bar{B}_{12}=-\sin\theta\cos\theta\neq 0$.\\
Since $B_{ij}=\frac{-1}{n}\delta_{ij}, 2\leq i,j\leq n,
\bar{B}_{\alpha\beta}=\frac{-1}{n}\delta_{\alpha\beta}$, We can
rechoose $\{E_2,\cdots,E_n\}$ such that
\begin{equation*}
(A_{ij})=\left(
  \begin{array}{ccccc}
    A_{11} & A_{12} & A_{13} & \cdots & A_{1n} \\
    A_{21} & a_2 & 0 & \cdots & 0 \\
    A_{31}& 0 & a_3& \cdots & 0 \\
    \vdots & \vdots & \vdots& \ddots & \vdots \\
   A_{n1} & 0 & 0 & \cdots &a_n \\
  \end{array}
\right),(\bar{A}_{ij})=\left(\begin{array}{cccccc}
    \bar{A}_{11} & \bar{A}_{12} & \bar{A}_{13}& \bar{A}_{14} & \cdots & \bar{A}_{1n} \\
    \bar{A}_{21} & \bar{A}_{22} & \bar{A}_{23} & \bar{A}_{24} & \cdots & \bar{A}_{2n} \\
    \bar{A}_{31}& \bar{A}_{32} & \bar{a}_3&0& \cdots & 0 \\
    \bar{A}_{41}&\bar{A}_{42}& 0& \bar{a}_4 &\cdots& 0\\
    \vdots & \vdots & \vdots&\vdots& \ddots & \vdots \\
   \bar{A}_{n1} &\bar{A}_{n2}& 0 & 0 & \cdots &\bar{a}_n \\
  \end{array}
\right).
\end{equation*}
Noting that Lemma \ref{lemma54} holds under this basis, using
$R_{1i1i}-C_{1,1}+C_1^2=0$ and (\ref{equa4}) we get that
$$a_2=a_3=\cdots=a_n,\bar{a}_3=\cdots=\bar{a}_n.$$
In formula (\ref{equa4}), Let $i=2,  k=\alpha,j=l=1$ and $i=k=\alpha,k=l=\beta$ we
get that
$$\bar{A}_{\alpha 2}=\bar{A}_{2\alpha}=0,a_2=a_3=\cdots=a_n=\bar{a}_3=\cdots=\bar{a}_n.$$
Thus we have
\begin{equation}\label{2bab}
(A_{ij})=\left(
  \begin{array}{ccccc}
    A_{11} & A_{12} & A_{13} & \cdots & A_{1n} \\
    A_{21} & a_2 & 0 & \cdots & 0 \\
    A_{31}& 0 & a_2& \cdots & 0 \\
    \vdots & \vdots & \vdots& \ddots & \vdots \\
   A_{n1} & 0 & 0 & \cdots &a_2 \\
  \end{array}~~
\right),\bar{(A)}_{ij}=\left(
  \begin{array}{ccccc}
    \bar{A}_{11} & \bar{A}_{12} & \bar{A}_{13} & \cdots & \bar{A}_{1n} \\
    \bar{A}_{21} & \bar{A}_{22} & 0 & \cdots & 0 \\
    \bar{A}_{31}& 0 & a_2& \cdots & 0 \\
    \vdots & \vdots & \vdots& \ddots & \vdots \\
   \bar{A}_{n1} & 0 & 0 & \cdots &a_2 \\
  \end{array}
\right).
\end{equation}
Since $\bar{B}_{1\alpha}=0,\bar{B}_{2\alpha}=0,
\bar{B}_{\alpha\beta}=\frac{-1}{n}\delta_{\alpha\beta}$,
we do covariant differentiation to find
\begin{equation}\label{2b11}
\begin{split}
&(\bar{B}_{11}+\frac{1}{n})\omega_{1\alpha}+\bar{B}_{12}
\omega_{2\alpha}=\sum{}_k\bar{B}_{1\alpha,k}\omega_k;\\
&\bar{B}_{12}\omega_{1\alpha}+(\bar{B}_{22}+\frac{1}{n})
\omega_{2\alpha}=\sum{}_k\bar{B}_{2\alpha,k}\omega_k;\\
&\bar{B}_{\alpha\beta,k}=0,~~\forall~\alpha,\beta,k.
\end{split}
\end{equation}
Using
$E_{\beta}(\bar{B}_{\alpha\alpha})=\bar{B}_{\alpha\alpha,\beta}$,
(\ref{equa3}) and (\ref{2b11}), we get that
\begin{equation}\label{2b5}
\bar{C}_{\alpha}=0,\bar{B}_{1\alpha,\alpha}=-\bar{C}_1,\bar{B}_{2\alpha,\alpha}=-\bar{C}_2,\bar{B}_{1\alpha,\beta}=\bar{B}_{2\alpha,\beta}=0,\alpha\neq\beta.
\end{equation}
Thus from (\ref{2b2}) and (\ref{2b11}) we have
\begin{equation}\label{2b6}
\begin{split}
&\cos^2\theta\omega_{1\alpha}+\sin\theta\cos\theta\omega_{2\alpha}=\bar{B}_{1\alpha,1}\omega_1+\bar{B}_{1\alpha,2}\omega_2+\bar{B}_{1\alpha,\alpha}\omega_{\alpha};\\
&\sin\theta\cos\theta\omega_{1\alpha}+\sin^2\theta\omega_{2\alpha}=\bar{B}_{2\alpha,1}\omega_1+\bar{B}_{2\alpha,2}\omega_2+\bar{B}_{2\alpha,\alpha}\omega_{\alpha}.
\end{split}
\end{equation}
Using $E_{\beta}(\bar{B}_{11})=\bar{B}_{11,\alpha}$,
$\bar{B}_{11}+\bar{B}_{22}-\frac{n-2}{n}=0$ and (\ref{2b6}) we
derive
\begin{equation}\label{2b7}
\begin{split}
&\bar{B}_{1\alpha,1}=\bar{B}_{2\alpha,2}=\bar{B}_{1\alpha,2}=0,\sin\theta\bar{C}_1=\cos\theta\bar{C}_2,\\
&\omega_{2\alpha}=\frac{\cos^2\theta
C_1-\bar{C}_1}{\cos\theta\sin\theta}\omega_{\alpha}.
\end{split}
\end{equation}
 Using
$d\bar{B}_{2\alpha,2}+\sum_k\bar{B}_{k\alpha,2}\omega_{k2}+\sum_k\bar{B}_{2k,2}\omega_{k\alpha}+\sum_k\bar{B}_{2\alpha,k}\omega_{k2}=\sum_k\bar{B}_{2\alpha,2k}\omega_k$
,(\ref{2b5}) and (\ref{2b7}),  we get
\[
\bar{B}_{2\alpha,21}=0.
\]
Similarly we can get $\bar{B}_{2\alpha,12}=0.$
Using Ricci identity
$\bar{B}_{2\alpha,21}-\bar{B}_{2\alpha,12}=\sum_k\bar{B}_{k\alpha}R_{k221}+\sum_k\bar{B}_{2k}R_{k\alpha21}$
and $R_{1\alpha12}=A_{2\alpha}=\bar{A}_{2\alpha}=0$ we get $R_{2\alpha21}=0.$ By \eqref{equa4} this implies
\begin{gather*}
A_{1\alpha}=\bar{A}_{1\alpha}=0,\\
\{A_{ij}\}=\text{diag}\bigl( \Big[\begin{smallmatrix}
A_{11}& A_{12}\\
A_{21}& a_2
\end{smallmatrix}\Big],
a_2,\cdots,a_2\bigr),~~
\{\bar{A}_{ij}\}=\text{diag}\bigl( \Big[\begin{smallmatrix}
\bar{A}_{11}& \bar{A}_{12}\\
\bar{A}_{21}& \bar{A}_{22}
\end{smallmatrix}\Big],
a_2,\cdots,a_2\bigr).
\end{gather*}
From \eqref{equa6} we have
\[
\bar{B}_{11}+\bar{B}_{22}=-\frac{n-2}{n},
\bar{B}^2_{11}+\bar{B}^2_{22}+2\bar{B}^2_{12}
=\frac{n^2-2n+2}{n^2}.
\]
Using the above identity and
$E_k(\bar{B}_{ij})+\sum_l\bar{B}_{lj}\omega_{li}(E_k)+\sum_l\bar{B}_{il}\omega_{lj}(E_k)=\bar{B}_{ij,k}$,
we get
\begin{equation*}
\begin{split}
&\bar{B}_{11,1}+\bar{B}_{22,1}=0, \bar{B}_{11,2}+\bar{B}_{22,2}=0,\\
&\bar{B}_{11}\bar{B}_{11,1}+\bar{B}_{22}\bar{B}_{22,1}+2\bar{B}_{12}\bar{B}_{12,1}=0,\\
&\bar{B}_{11}\bar{B}_{11,2}+\bar{B}_{22}\bar{B}_{22,2}+2\bar{B}_{12}\bar{B}_{12,2}=0.
\end{split}
\end{equation*}
From the above equation, \eqref{equa6},
$\bar{B}_{11,2}=\bar{B}_{12,1}+\bar{C}_2,$ and $\bar{B}_{22,1}=\bar{B}_{12,2}+\bar{C}_1,$ we derive
\begin{equation}\label{2b10}
\begin{split}
&\bar{B}_{11,2}=2\cos\theta\sin\theta\bar{C}_1,
\bar{B}_{22,1}=2\cos\theta\sin\theta\bar{C}_2,\\
&\bar{B}_{12,1}=(\cos^2\theta-\sin^2\theta)\bar{C}_2,\bar{B}_{12,2}=(\sin^2\theta-\cos^2\theta)\bar{C}_1.
\end{split}
\end{equation}
Since
$E_1(\bar{B}_{12})=E_1(\cos\theta\sin\theta)=(\cos^2\theta-\sin^2\theta)E_1(\theta)$
and $E_1(\bar{B}_{12})=\bar{B}_{12,1}$, from (\ref{2b10}) we get
$E_1(\theta)=\bar{C}_2$. Similarly we have
$E_2(\theta)=C_1-\bar{C}_1$, thus we have
\begin{equation}\label{2b12}
E_1(\theta)=\bar{C}_2, E_2(\theta)=C_1-\bar{C}_1,
E_{\alpha}(\theta)=0.
\end{equation}
Combining Lemma 5.4 and (\ref{2b7}) we have
\begin{equation*}
\begin{split}
&d\omega_1=0, d\omega_2=-C_1\omega_1\wedge\omega_2,\\
&d\omega_{\alpha}=C_1\omega_1\wedge\omega_{\alpha}+\frac{\cos^2\theta
C_1-\bar{C}_1}{\cos\theta\sin\theta}\omega_2\wedge\omega_{\alpha}+\sum_{\beta}\omega_{\beta}\wedge\omega_{\beta\alpha}.
\end{split}
\end{equation*}
Therefore we have
$$[E_1,E_2]=C_1E_2.$$

Using $d\bar{C}_1+\bar{C}_2\omega_{21}=\sum_k\bar{C}_{1,k}\omega_k$ and $d\bar{C}_1+\bar{C}_2\omega_{21}=\sum_k\bar{C}_{1,k}\omega_k$, we have
\begin{equation}\label{cc1}
\begin{split}
&E_1(\bar{C}_1)=\bar{C}_{1,1},E_2(\bar{C}_1)=\bar{C}_{1,2}-C_1\bar{C}_2,\\
&E_1(\bar{C}_2)=\bar{C}_{2,1}, E_2(\bar{C}_2)=\bar{C}_{2,2}+C_1\bar{C}_1.
\end{split}
\end{equation}

Using $[E_1,E_2](\theta)=C_1E_2(\theta)$, (\ref{2b12}) and
(\ref{cc1}) we get that
\begin{equation}\label{2b14}
\bar{C}_{1,1}+\bar{C}_{2,2}=C_{1,1}-C_1^2=R_{1\alpha1\alpha}.
\end{equation}
Combining  $\sin\theta\bar{C}_1=\cos\theta\bar{C}_2$, (\ref{2b12}) and (\ref{cc1}), we obtain that
$$\bar{C}_{1,2}=\frac{\cos\theta}{\sin\theta}(\bar{C}_1^2+\bar{C}_2^2+\bar{C}_{2,2}),\bar{C}_{2,1}=\frac{\sin\theta}{\cos\theta}(\bar{C}_1^2+\bar{C}_2^2+\bar{C}_{1,1})$$
From above formula and (\ref{equa3}), we have
\begin{equation}\label{r121}
R_{1\alpha2\alpha}=\frac{\cos^2\theta-\sin^2\theta}{\sin\theta\cos\theta}(\bar{C}_1^2+\bar{C}_2^2)
+\frac{\cos\theta}{\sin\theta}\bar{C}_{2,2}-\frac{\sin\theta}{\cos\theta}\bar{C}_{1,1}.
\end{equation}

Compute the covariant differentiation of $\bar{B}_{1\alpha,\alpha},\bar{B}_{1\alpha,1}$.
By \eqref{2b5} and the Ricci identity,
\begin{equation}\label{r123}
R_{1\alpha2\alpha}+\frac{\sin\theta}{\cos\theta}R_{2\alpha2\alpha}
+\frac{\sin^2\theta-\cos^2\theta}{\sin^2\theta\cos^2\theta}\bar{C}_1\bar{C}_2=\frac{\bar{C}_{2,2}}{\cos\theta\sin\theta}.
\end{equation}
Similarly using Ricci identity $\bar{B}_{2\alpha,2\alpha}-\bar{B}_{2\alpha,\alpha2}
=\sum_k\bar{B}_{k\alpha}R_{k22\alpha}
+\sum_k\bar{B}_{2k}R_{k\alpha2\alpha}$
we get
\begin{equation}\label{2b15}
R_{1\alpha2\alpha}+\frac{\cos\theta}{\sin\theta}R_{1\alpha1\alpha}
+\frac{\cos^2\theta-\sin^2\theta}{\sin^2\theta\cos^2\theta}\bar{C}_1\bar{C}_2=\frac{\bar{C}_{1,1}}{\cos\theta\sin\theta}.
\end{equation}
Sum (\ref{r123}) and (\ref{2b15}). Using (\ref{2b14}) we get that
\begin{equation}\label{r124}
2R_{1\alpha2\alpha}+\frac{\sin\theta}{\cos\theta}(R_{2\alpha2\alpha}-R_{1\alpha1\alpha})=0.
\end{equation}
Note that $\sin\theta\bar{C}_1=\cos\theta\bar{C}_2$, combining (\ref{r121}),(\ref{r123}) and (\ref{2b15}), we obtain that
\begin{equation}\label{r125}
2R_{1\alpha2\alpha}-
\frac{\cos\theta}{\sin\theta}
(R_{2\alpha2\alpha}-R_{1\alpha1\alpha})=0.
\end{equation}
From (\ref{r124}) and (\ref{r125}), we get that
\begin{equation}\label{rr}
R_{1\alpha1\alpha}-R_{2\alpha2\alpha}=0,~R_{1\alpha2\alpha}=0.
\end{equation}
Therefore from \eqref{equa4} we get  $R_{2\alpha2\alpha}=R_{\alpha\beta\alpha\beta}$. Hence
$R_{ijkl}=R_{2\alpha2\alpha}(\delta_{ik}\delta_{jl}
-\delta_{il}\delta_{ij}).$
By Schur's theorem $(M^n,g)$ is of constant curvature.
This completes the proof.
\end{proof}

\section{Deformable hypersurfaces with one principal curvature
of multiplicity $n-2$}
This section is devoted to the proof of the following
\begin{PROPOSITION}\label{prop-n-2}
Let $f,\bar{f}: M^n\rightarrow R^{n+1}~~(n\geq 4)$ be two
immersed hypersurfaces without
umbilics, whose principal curvatures have constant multiplicities. Suppose their M\"{o}bius metrics are equal,
and one of principal curvatures of $B$ has multiplicity
$n-2$ everywhere.
Assume that $f(M^n)$ is NOT M\"{o}bius congruent to
$\bar{f}(M^n)$. Then it must be either of the following three cases:

 (1) $f(M^n)$ is congruent to part of $L^2\times R^{n-2}$ and
$\bar{f}(M^n)$ is congruent to part of
$\bar{L}^2\times R^{n-2}$, where $L^2$ and $\bar{L}^2$
are a pair of isometric Bonnet surface in $R^3$.

 (2) $f(M^n)$ is congruent to part of $CL^2\times R^{n-3}$
 where $CL^2\subset R^4$ is a cone over $L^2\subset S^3$, and
$\bar{f}(M^n)$ is congruent to part of $C\bar{L}^2\times R^{n-3}$. $L^2$ and $\bar{L}^2$ form a Bonnet pair in $S^3$.

(3) $f(M^n)$ is a rotation hypersurfaces over
$L^2\subset R^3_+$, and $\bar{f}(M^n)$ is a rotation hypersurfaces over $\bar{L}^2\subset R^3_+$, where $L^2$ and
$\bar{L}^2$ form a Bonnet pair in hyperbolic half-space $R^3_+$.
\end{PROPOSITION}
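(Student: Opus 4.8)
The plan is to show, via the Reduction Theorem~\ref{retheorem} applied with $m=2$, that both $f$ and $\bar f$ are cylinders, cones or rotational hypersurfaces over surfaces $L^2,\bar L^2$ in a common $3$-dimensional space form, and then to recognize $L^2,\bar L^2$ as a Bonnet pair.

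\textbf{Algebraic set-up.} Since $g=\bar g$, the curvature tensor $R_{ijkl}$ and the connection forms $\omega_{ij}$ are common, so by \eqref{equa4}--\eqref{equa5} the tensor $S$ of Theorem~\ref{th1} built from $B$ agrees with that built from $\bar B$, and Theorem~\ref{th1} applies. The constant-multiplicity hypothesis yields a smooth frame diagonalizing $B$, and, writing the multiplicity-$(n-2)$ eigenvalue as $\mu$, the multiplicity pattern is $(n-2,1,1)$ or $(n-2,2)$. I would dispose of $(n-2,2)$ first: then \eqref{equa6} forces all M\"obius principal curvatures of $f$ to be constant, and a simpler variant of the argument below exhibits $f$ as reducible over a totally umbilic surface, which has no M\"obius deformation, contradicting $f\not\cong\bar f$. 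So $B=\mathrm{diag}(\lambda_1,\lambda_2,\mu,\dots,\mu)$ with $\lambda_1\ne\lambda_2$, both $\ne\mu$. By Theorem~\ref{th1}, in a suitable frame $\bar B=\mathrm{diag}(\bar\lambda_1,\bar\lambda_2,\bar\mu,\dots,\bar\mu)$ while $B$ carries a $2\times2$ block in the first two slots; rotating only the $2$-plane $D_1$ orthogonal to $D_2:=\mathrm{Span}\{E_3,\dots,E_n\}$ we may instead keep $B$ diagonal and let $\bar B$ carry a $2\times2$ block, whose eigenvalues are $\{\lambda_1,\lambda_2\}$, i.e.\ the block is $R_{-\theta}\,\mathrm{diag}(\lambda_1,\lambda_2)\,R_\theta$ for a function $\theta$ on $M$, with $\bar\mu=\pm\mu$. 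The distribution $D_2$ is at once the $\mu$-eigenspace of $B$ and the $\bar\mu$-eigenspace of $\bar B$, so it and its complement $D_1$ are common to $f$ and $\bar f$. If $\theta$ were locally constant, $\bar B$ would be diagonal in this frame and coincide with $B$ up to a harmless relabelling of $E_1,E_2$ and a normal reversal, which forces $f\cong\bar f$; so from now on I assume $\theta$ is non-constant on a dense open set.

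\textbf{Verifying the Reduction Theorem hypotheses.} This is the core of the argument, and it runs parallel to the proof of Proposition~\ref{pro32}, but with $D_2$ of dimension $n-2$ rather than $n-1$ and with the two simple M\"obius principal curvatures $\lambda_1,\lambda_2$ no longer constant. Differentiating the off-diagonal identities $B_{i\alpha}=0$ and $\bar B_{\alpha\beta}=\bar\mu\,\delta_{\alpha\beta}$ against the structure equations for $B$ and $\bar B$ (same $\omega_{ij}$), one first obtains $B_{\alpha\beta,k}=0$, $\bar C_\alpha=0$, relations of the form $(\bar\lambda_i-\bar\lambda_j)B_{ij,k}=(\lambda_i-\lambda_j)\bar B_{ij,k}$, and expressions for $\omega_{p\alpha}$, $\omega_{\alpha\beta}$ and $d\theta$ in terms of $\lambda_1,\lambda_2,\mu$ and the M\"obius forms. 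Since $R_{ijkl}$ is common, substituting into \eqref{equa4}--\eqref{equa5} forces the Blaschke tensors to block-decompose, $A=\mathrm{diag}(\text{$2\times2$ block},a,\dots,a)$ and likewise $\bar A$, with $A_{p\alpha}=\bar A_{p\alpha}=0$; then the Ricci identities applied to $B_{p\alpha}$ and $\bar B_{p\alpha}$, together with the common curvature, yield $R_{p\alpha q\alpha}$-relations from which $C_\alpha=0$ and hence $B_{pq,\alpha}=B_{p\alpha,q}+\delta_{pq}C_\alpha=0$ for $p,q\in\{1,2\}$, $m+1\le\alpha\le n$. Thus conditions $(1)$--$(3)$ of Theorem~\ref{retheorem} hold for $f$ with $m=2$; the symmetric computation, carried out in the frame that diagonalizes $\bar B$, gives the same for $\bar f$.

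\textbf{Reduction and the Bonnet pair.} By Theorem~\ref{retheorem}, $f$ is M\"obius congruent to one of Examples~\ref{ex31},~\ref{ex32},~\ref{ex33} — a cylinder over $L^2\subset R^3$, a cone over $L^2\subset S^3$, or a rotational hypersurface over $L^2\subset R^3_+$ — and $\bar f$ is reducible over some $\bar L^2$. By Remark~\ref{rem-metric}, the M\"obius metric of such a hypersurface is the warped product $g=\kappa^2(I_u+I_{N^{n-2}(c)})$ with $\kappa^2=4H_u^2-\frac{2n}{n-1}(K_u+c)$, the two factors living on the common quotient $\widetilde{M}^2=M^n/D_2$ and on the leaves of $D_2$, and $c\in\{0,-1,1\}$ distinguishing the three constructions. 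Since $D_2$ and $g=\bar g$ are shared by $f$ and $\bar f$, comparing $g$ along the leaves of $D_2$ forces $\kappa=\bar\kappa$ and $c=\bar c$ (the leaf factor being a rigid space form), so $f$ and $\bar f$ are of the same type, while comparing $g$ along $D_1$ gives $I_u=I_{\bar u}$ on $\widetilde{M}^2$. Then $K_u=K_{\bar u}$ is determined by $I_u$ and the common ambient curvature $-c$, so $\kappa=\bar\kappa$ forces $H_u^2=H_{\bar u}^2$, and after reversing the unit normal of $\bar u$ (and of $\bar f$) if necessary, $H_u=H_{\bar u}$. Hence $u,\bar u:L^2\to N^3(-c)$ have the same first fundamental form and mean curvature, i.e.\ form a Bonnet pair, non-congruent in $N^3(-c)$ because a congruence there would extend to a M\"obius transformation carrying $f$ onto $\bar f$; this is precisely the trichotomy claimed, the converse being Propositions~\ref{31},~\ref{32},~\ref{33}. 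The principal obstacle I expect is the second step: the chain of identities is longer and less rigid than in Proposition~\ref{pro32} because $\lambda_1,\lambda_2$ vary, so one must carry the extra function $\mu$ (equivalently $\theta$) through all the covariant differentiations and Ricci-identity computations until $C_\alpha=0$ finally drops out; Step~3 is then routine unwinding of the warped-product formula~\eqref{eq-g}.
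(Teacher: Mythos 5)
Your overall strategy—check the hypotheses of the Reduction Theorem~\ref{retheorem} with $m=2$, reduce both hypersurfaces, and read off the Bonnet pair from the warped-product formula \eqref{eq-g}—coincides with the paper's treatment of its ``Case I'', and your final step (matching the type $c$, then $I_u$, $K_u$, $H_u$) is essentially the paper's. The gap is in the middle step, where you claim that the shared metric forces $C_\alpha=0$ \emph{and hence} $B_{pq,\alpha}=0$. The first assertion is correct (it is the paper's \eqref{09}), but the second is a non sequitur: by \eqref{equa3}, $C_\alpha=0$ only gives $B_{12,\alpha}=B_{1\alpha,2}$, and what the comparison of $\nabla B$ with $\nabla\bar B$ actually yields is $\mu\,B_{12,\alpha}=0$ (combine the paper's \eqref{07} and \eqref{08}). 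So hypothesis $(3)$ of Theorem~\ref{retheorem} can genuinely fail when $\mu=0$, and that configuration is not empty: it is realized by the cone over Cartan's minimal isoparametric hypersurface, where $\bar B$ differs from $B$ by a \emph{constant} rotation $\theta$ of the two simple principal directions. Your dismissal of constant $\theta$ (``$\bar B$ would be diagonal in this frame and coincide with $B$ up to relabelling'') is false for $\theta\notin\tfrac{\pi}{2}\mathbb{Z}$, and even for $\theta=\pi/2$ the tensors $B$ and $\bar B$ differ, so Theorem~\ref{fundthe} does not give congruence of the maps; whether the \emph{images} are congruent is exactly the delicate issue of Remark~\ref{rem3}.

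The paper closes this hole with the long computation of Proposition~\ref{prop-n-21}: assuming $B_{12,\alpha}\neq0$ for some $\alpha$, it derives $\mu=0$, $\Phi=\bar\Phi=0$, constancy of $\theta$, and identifies both hypersurfaces with the M\"obius isoparametric hypersurface with principal curvatures $\sqrt{\tfrac{n-1}{2n}},\,-\sqrt{\tfrac{n-1}{2n}},\,0,\dots,0$, whose images are M\"obius congruent—only then is this case excluded by the non-congruence hypothesis of the proposition. Your proposal contains no substitute for this analysis, and your proposed dichotomy ($\theta$ constant versus non-constant) is not one you can decide a priori; the workable dichotomy is $B_{12,\alpha}\equiv0$ versus $B_{12,\alpha}\neq0$. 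Two minor further points: the multiplicity pattern $(n-2,2)$ is disposed of more simply than you suggest (if $\lambda_1=\lambda_2$ the two $2\times2$ blocks are the same scalar matrix, so $B=\bar B$ and Theorem~\ref{fundthe} gives congruence directly, contradicting the hypothesis—no reducibility argument is needed), and in Case I the paper pins down that $f$ and $\bar f$ reduce to the \emph{same} type by showing $A_{\alpha\alpha}=\bar A_{\alpha\alpha}$ and hence $Q=\bar Q$ in the notation of Theorem~\ref{retheorem}, which is cleaner than your appeal to rigidity of the leaf factor.
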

Recall that we have adopted the following convention on
the range of indices as the last section:
\[
1\leq i,j,k \leq n;\ 3\leq \alpha,\beta,\gamma \leq n.
\]
Since one of principal
curvatures of $B$ has multiplicity $(n-2)$ everywhere,
by Theorem \ref{th1} we can assume without loss of generality
that there exists a local orthonormal basis $\{E_1,\cdots,E_n\}$
for $(M^n,g)$ which is shared by $f,\bar{f}$, such that
\begin{eqnarray}\label{bb1}
\{B_{ij}\}=\text{diag}(\lambda_1,\lambda_2,\mu,\cdots,\mu);
\{\bar{B}_{ij}\}=\text{diag}\Bigl(\left[
    \begin{smallmatrix}
      \bar{B}_{11} & \bar{B}_{12}\\
      \bar{B}_{21} &\bar{B}_{22}
    \end{smallmatrix}
  \right],
 \mu,\cdots,\mu \Bigr),
\end{eqnarray}
where $\lambda_1\neq\mu, \lambda_2\neq\mu.$ By the identities \eqref{equa6} we know that the sub-matrices
\[
\left(\begin{smallmatrix}
\lambda_1 & 0\\
0 & \lambda_2
\end{smallmatrix}\right)
\quad\text{and}\quad
\left(\begin{smallmatrix}
\bar{B}_{11}& \bar{B}_{12}\\
\bar{B}_{21}& \bar{B}_{22}
\end{smallmatrix}\right)
\]
have equal traces and equal norms. Hence $B,\bar{B}$
share the same eigenvalues. Therefore, the highest multiplicity of principal curvatures of $\bar{f}$ is also $n-2$.
Denote $\bar{B}_{11}=\bar{\lambda}_1,~\bar{B}_{22}=\bar{\lambda}_2$.

We assert that $\lambda_1\ne\lambda_2$ and
$\lambda_1\ne\bar{\lambda}_1$ on an open dense subset of $M$.
Otherwise, suppose $\lambda_1=\lambda_2$ on an open subset.
Then the 2 by 2 sub-matrices share the same eigenvalues
$\lambda_1=\lambda_2$, hence both be scalar matrix
$\text{diag}(\lambda_1,\lambda_1)$. We get $B=\bar{B}$
on an open subset. So these two hypersurfaces are
M\"obius equivalent. Contradiction.
If $\lambda_1=\bar{\lambda}_1$ on an open subset
we will get a similar contradiction.

From now on, without loss of generality we assume that on $M^n$
\begin{equation}\label{eq-ne}
\lambda_1\ne\mu, ~\lambda_2\ne\mu, ~\lambda_1\ne\lambda_2,~
 \lambda_1\ne\bar{\lambda}_1.
\end{equation}
By \eqref{equa6} they satisfy
\begin{equation}\label{H1}
\begin{split}
&\lambda_1+\lambda_2+(n-2)\mu=\bar{\lambda}_1+\bar{\lambda}_2+(n-2)\mu=0,\\
&\lambda_1^2+\lambda_2^2+(n-2)\mu^2=\bar{\lambda}_1^2+\bar{\lambda}_2^2+2\bar{B}_{12}^2+(n-2)\mu^2=\frac{n-1}{n}.
\end{split}
\end{equation}
Choose the dual basis $\{\omega_i\}$ with connection forms
$\omega_{ij}$ satisfying
$d\omega_i=\sum_k\omega_{ik}\wedge\omega_k,~
\omega_{ij}=-\omega_{ji}$.
It follows from $dB_{ij}+\sum_kB_{kj}\omega_{ki}
+\sum_kB_{ik}\omega_{kj}=\sum_kB_{ij,k}\omega_k$ that
\begin{equation}\label{01}
\begin{split}
&0=B_{\alpha\beta,i}=B_{1\alpha,\beta}
=B_{2\alpha,\beta}, ~~\alpha\ne\beta;\\
&(\lambda_1-\mu)\omega_{1\alpha} =B_{1\alpha,1}\omega_1
+B_{1\alpha,2}\omega_2
+B_{1\alpha,\alpha}\omega_{\alpha};\\
&(\lambda_2-\mu)\omega_{2\alpha} =B_{2\alpha,1}\omega_1
+B_{2\alpha,2}\omega_2
+B_{2\alpha,\alpha}\omega_{\alpha} ;\\
&(\lambda_1-\lambda_2)\omega_{12} =\sum{}_kB_{12,k}\omega_k.
\end{split}
\end{equation}
Using
$d\bar{B}_{ij}+\sum_k\bar{B}_{kj}\omega_{ki}+\sum_k\bar{B}_{ik}\omega_{kj}
=\sum_k\bar{B}_{ij,k}\omega_k$ in a similar way we obtain
\begin{equation}\label{02}
\begin{split}
&0=\bar{B}_{\alpha\beta,i}=\bar{B}_{1\alpha,\beta}
=\bar{B}_{2\alpha,\beta}, ~~\alpha\ne\beta;\\
&(\bar{\lambda}_1-\mu)\omega_{1\alpha}+\bar{B}_{12}\omega_{2\alpha}
=\bar{B}_{1\alpha,1}\omega_1+\bar{B}_{1\alpha,2}\omega_2
+\bar{B}_{1\alpha,\alpha}\omega_{\alpha};\\
&(\bar{\lambda}_2-\mu)\omega_{2\alpha}+\bar{B}_{12}\omega_{1\alpha}
=\bar{B}_{2\alpha,1}\omega_1+\bar{B}_{2\alpha,2}\omega_2
+\bar{B}_{2\alpha,\alpha}\omega_{\alpha};\\
&d\bar{B}_{12}+(\bar{\lambda}_1-\bar{\lambda}_2)\omega_{12}
=\sum{}_k\bar{B}_{12,k}\omega_k.
\end{split}
\end{equation}
Comparing \eqref{01} with \eqref{02} yields
\begin{equation}\label{cf1}
\begin{split}
&\bar{B}_{1\alpha,1}=
\frac{\bar{\lambda}_1-\mu}{\lambda_1-\mu}B_{1\alpha,1}
+\frac{\bar{B}_{12}}{\lambda_2-\mu}B_{2\alpha,1};~
\bar{B}_{2\alpha,1}=
\frac{\bar{\lambda}_2-\mu}{\lambda_2-\mu}B_{2\alpha,1}
+\frac{\bar{B}_{12}}{\lambda_1-\mu}B_{1\alpha,1};\\
&\bar{B}_{1\alpha,2}=
\frac{\bar{\lambda}_1-\mu}{\lambda_1-\mu}B_{1\alpha,2}
+\frac{\bar{B}_{12}}{\lambda_2-\mu}B_{2\alpha,2};~
\bar{B}_{2\alpha,2}=
\frac{\bar{\lambda}_2-\mu}{\lambda_2-\mu}B_{2\alpha,2}
+\frac{\bar{B}_{12}}{\lambda_1-\mu}B_{1\alpha,2};\\
&\bar{B}_{1\alpha,\alpha}=
\frac{\bar{\lambda}_1-\mu}{\lambda_1-\mu}B_{1\alpha,\alpha}
+\frac{\bar{B}_{12}}{\lambda_2-\mu}B_{2\alpha,\alpha};~
\bar{B}_{2\alpha,\alpha}=
\frac{\bar{\lambda}_2-\mu}{\lambda_2-\mu}B_{2\alpha,\alpha}
+\frac{\bar{B}_{12}}{\lambda_1-\mu}B_{1\alpha,\alpha}.
\end{split}
\end{equation}
Another corollary of \eqref{02},\eqref{cf1} and \eqref{equa3} is
\begin{equation}\label{04}
B_{\alpha\alpha,\beta}=C_{\beta}=\bar{C}_{\beta}
=\bar{B}_{\alpha\alpha,\beta}=B_{\beta\beta,\beta}
=\bar{B}_{\beta\beta,\beta}~, ~~\forall~\alpha\ne\beta.
\end{equation}
Taking the covariant derivatives for the identities \eqref{H1}
and invoking \eqref{04}, we have
\begin{equation}\label{05}
\begin{split}
&B_{11,\alpha}+B_{22,\alpha} =(2-n)C_{\alpha},~
\lambda_1B_{11,\alpha}
+\lambda_2B_{22,\alpha} =(2-n)\mu C_{\alpha};\\
&\bar{B}_{11,\alpha}+\bar{B}_{22,\alpha}=(2-n)\bar{C}_{\alpha},~
\bar{\lambda}_1\bar{B}_{11,\alpha}+\bar{\lambda}_2
\bar{B}_{22,\alpha} =(2-n)\mu
\bar{C}_{\alpha}-2\bar{B}_{12}\bar{B}_{12,\alpha}.
\end{split}
\end{equation}
The two equations in the first line have solution
\begin{equation}\label{06}
B_{11,\alpha}= \frac{\mu-\lambda_1}{\lambda_2-\lambda_1}
(2-n)C_{\alpha},~ B_{22,\alpha}=
\frac{\lambda_2-\mu}{\lambda_2-\lambda_1} (2-n)C_{\alpha}.
\end{equation}
Take the sum of the first and the fourth equations in \eqref{cf1}
and insert \eqref{06} into it.
By \eqref{equa3}, the first equation in \eqref{05},
and the identities \eqref{H1}, the result is as below
after simplification:
\begin{equation}\label{07}
(\lambda_1-\lambda_2) \mu \bar{B}_{12}B_{12,\alpha}
=\frac{n-1}{n}(\bar{\lambda}_1 -\lambda_1)C_{\alpha}.
\end{equation}
On the other hand,
take the difference between the second and the
third equations in \eqref{cf1}.
After simplification as before we get
\begin{equation}\label{08}
(\lambda_1-\bar{\lambda}_1 )(\lambda_1-\lambda_2) \mu
B_{12,\alpha}=\frac{n-1}{n}\bar{B}_{12}C_{\alpha}.
\end{equation}
It follows from \eqref{07}\eqref{08} that
\begin{equation}\label{09}
C_{\alpha}=\bar{C}_{\alpha}=0,~\forall~\alpha.
\end{equation}
Otherwise there will be $B_{12,\alpha}^2=-(\bar{\lambda}_1-\lambda_1)^2$
which is impossible.
As a corollary of \eqref{04}\eqref{06} and \eqref{09},
\begin{equation}\label{010}
B_{11,\alpha}=B_{22,\alpha}=
B_{\beta\beta,\alpha}=\bar{B}_{\beta\beta,\alpha}=0,~~
B_{1\alpha,1}=B_{2\alpha,2}=0, ~~\forall~\alpha,\beta.
\end{equation}
We emphasize that \eqref{08}\eqref{eq-ne} and $C_{\alpha}=0$
implies $\mu \cdot B_{12,\alpha}=0$.
Now we divide the proof into two cases.

{\bf Case I}, $B_{12,\alpha}=0$, for all $\alpha.$

{\bf Case II}, $B_{12,\alpha}\ne 0,$ for some $\alpha.$

First we consider Case I.
Since $C_{\alpha}=0, B_{12,\alpha}=0,$ from the
Reduction Theorem \ref{retheorem} we know that $f$ is
M\"{o}bius equivalent to a hypersurface given by Example
(\ref{ex31}),(\ref{ex32}) or (\ref{ex33}) when $Q=0, Q<0$ or $Q>0$, respectively.

We define $\omega_{ij}=\sum_k\Gamma^i_{jk}\omega_k$.
From \eqref{01} and the definition of $Q$ in the proof of
Theorem~\ref{retheorem}, we get that
\[
Q=2A_{\alpha\alpha}+\mu^2+(\Gamma^1_{\alpha\alpha})^2
+(\Gamma^2_{\alpha\alpha})^2.
\]
On the other hand, $R_{\alpha\beta\alpha\beta}=
\mu^2+2A_{\alpha\alpha}=\mu^2+2\bar{A}_{\alpha\alpha}$,
so $A_{\alpha\alpha}=\bar{A}_{\alpha\alpha}$.
Therefore \[Q=\bar{Q},\]
and $f,\bar{f}$ are congruent to two cylinders, or two cones,
or two rotational hypersurfaces over some surfaces in a 3-dimensional space form.
According to Remark~\ref{rem-metric} and \eqref{eq-g},
in either case they share the same metric
\[
g=\left[4H_u^2-\frac{2n}{n-1}(K_u+c)\right]
(I_u+I_{N^{n-2}(c)}).
\]
So they must share the same surface metric $I_u$, hence the same
surface curvature $K_u$, hence also the same mean curvature.
Therefore they come from a Bonnet pair in the corresponding 3-space. This finishes our proof of Proposition~\ref{prop-n-2}
in Case I.

Next we consider Case II where $B_{12,\alpha}\ne 0$ for some $\alpha$. We have the following results.
\begin{PROPOSITION}\label{prop-n-21}
Let $f,\bar{f}: M^n\rightarrow R^{n+1}~~(n\geq 4)$ be two
an immersed hypersurfaces without
umbilics, whose principal curvatures have constant multiplicities. Suppose their M\"{o}bius metrics are equal,
and one of principal curvatures of $B$ has multiplicity
$n-2$ everywhere. We can assume
that there exists a local orthonormal basis $\{E_1,\cdots,E_n\}$
for $(M^n,g)$ which is shared by $f,\bar{f}$, such that
\begin{eqnarray*}
\{B_{ij}\}=\text{diag}(\lambda_1,\lambda_2,\mu,\cdots,\mu);
\{\bar{B}_{ij}\}=\text{diag}\Bigl(\left[
    \begin{smallmatrix}
      \bar{B}_{11} & \bar{B}_{12}\\
      \bar{B}_{21} &\bar{B}_{22}
    \end{smallmatrix}
  \right],
 \mu,\cdots,\mu \Bigr),
\end{eqnarray*}
where $\lambda_1\neq\mu, \lambda_2\neq\mu.$
If $B_{12,\alpha}\neq 0$, for some $\alpha$. Then there exist an diffeomorphism $\psi:M^n\to M^n$ and a
M\"{o}bius transformation $\Phi$ such that $\Phi\circ f=\bar{f}\circ \psi:M^n\to R^{n+1}$. Moreover, $f$ is
 M\"{o}bius equivalent to the minimal hypersurface defined by
\[
x=(x_1,x_2):M^n=N^3\times H^{n-3}(-\frac{n-1}{6n})\rightarrow S^{n+1},
\]
where
\[
x_1=\frac{y_1}{y_0},x_2=\frac{y_2}{y_0},y_0\in R^+,
y_1\in R^5, y_2\in R^{n-3}.
\]
Here $y_1:N^3\rightarrow
S^4(\sqrt{\frac{6n}{n-1}})\hookrightarrow R^5$
is Cartan's minimal isoparametric hypersurface
in $S^4(\sqrt{\frac{6n}{n-1}})$ with
three principal curvatures, and
$(y_0,y_2):H^{n-3}(-\frac{n-1}{6n})\hookrightarrow R^{n-2}_1$
is the standard embedding of the hyperbolic space of
sectional curvature $-\frac{n-1}{6n}$ into
the $(n-2)$-dimensional Lorentz space with
$-y_0^2+y_2^2=\frac{6n}{n-1}$.
\end{PROPOSITION}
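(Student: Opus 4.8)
The plan is to first pin down the algebraic type forced by Case II, then show the M\"obius form of both hypersurfaces vanishes, and finally recognise the resulting M\"obius isoparametric hypersurface. In Case II there is some $\alpha$ with $B_{12,\alpha}\ne 0$; since \eqref{08}, \eqref{eq-ne} and $C_\alpha=0$ (from \eqref{09}) give $\mu\cdot B_{12,\alpha}=0$, this forces $\mu\equiv 0$, whence \eqref{equa6} immediately yields $\lambda_1=-\lambda_2=\sqrt{(n-1)/(2n)}$ constant. Thus $B$ has three distinct \emph{constant} M\"obius principal curvatures $\sqrt{(n-1)/(2n)},-\sqrt{(n-1)/(2n)},0,\dots,0$, and the same identities \eqref{H1} force the $2\times 2$ block of $\bar B$ to have eigenvalues $\pm\sqrt{(n-1)/(2n)}$ as well. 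Because $\mu=0$, the matrices $B^2$ and $\bar B^2$ coincide (both equal $\mathrm{diag}(\lambda_1^2,\lambda_1^2,0,\dots,0)$), so comparing \eqref{equa5} for $f$ and for $\bar f$ — which share the metric $g$, hence the same Ricci tensor and the same trace of ${\bf A}$ by \eqref{equa6} — gives $A=\bar A$.

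Next I would prove $\Phi\equiv 0$. With $\lambda_1,\lambda_2$ constant and $C_\alpha=0$, the Codazzi equations \eqref{equa3} together with \eqref{01} and \eqref{010} express every $B_{ij,k}$ (and, via \eqref{02} and \eqref{cf1}, every $\bar B_{ij,k}$) and hence all the connection forms $\omega_{ij}$ in terms of $C_1,C_2$ and the $B_{12,\alpha}$; in particular $B_{12,1}=-C_2$, $B_{12,2}=-C_1$, $B_{1\alpha,\alpha}=-C_1$, $B_{2\alpha,\alpha}=-C_2$. Feeding $A=\bar A$ into \eqref{equa1} and inserting the explicit form of $B,\bar B$ produces linear relations among $C_1,C_2,\bar C_1,\bar C_2$ and $\bar B_{12}$; combining these with the second covariant derivatives of the off-diagonal entries $B_{1\alpha},B_{2\alpha}$ and the Ricci identity $B_{ij,kl}-B_{ij,lk}=\sum_m(B_{mj}R_{mikl}+B_{im}R_{mjkl})$ (with $R_{ijkl}$ common to $f$ and $\bar f$) should force $C_1=C_2=0$, hence $\Phi\equiv 0$, and symmetrically $\bar\Phi\equiv 0$. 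The degenerate sub-case $\bar B_{12}\equiv 0$ — where $B$ and $\bar B$ are both diagonal with $E_1,E_2$ interchanged — is covered by the same computation.

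Finally, both $f$ and $\bar f$ are then M\"obius isoparametric hypersurfaces with vanishing M\"obius form and three distinct constant M\"obius principal curvatures $\sqrt{(n-1)/(2n)},-\sqrt{(n-1)/(2n)},0,\dots,0$, the last of multiplicity $n-2$. By the classification of M\"obius isoparametric hypersurfaces (\cite{lw},\cite{hu}) — or, to stay self-contained, by integrating the M\"obius structure equations: since $\xi_\alpha=\mu Y+\xi=\xi$ for $\alpha\ge 3$, the subspace of $R^{n+3}_1$ spanned by $\xi$, the $Y_\alpha$ and their repeated derivatives is fixed, exactly as in the Reduction Theorem \ref{retheorem}, and solving the resulting first-order system pins the hypersurface down — such a hypersurface is unique up to a M\"obius transformation and is M\"obius equivalent to the minimal hypersurface $x=(x_1,x_2):N^3\times H^{n-3}(-(n-1)/(6n))\to S^{n+1}$ of the statement (a piece of the cone over Cartan's minimal isoparametric hypersurface $y_1:N^3\to S^4(\sqrt{6n/(n-1)})\hookrightarrow R^5\subset R^{n+1}$). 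Applying this to $\bar f$ as well, there is a M\"obius transformation $\Phi$ with $\Phi(f(M^n))=\bar f(M^n)$, and $\psi:=\bar f^{-1}\circ\Phi\circ f$ gives the asserted commuting data $\Phi\circ f=\bar f\circ\psi$; note $\psi$ need not preserve the principal distributions, which is precisely why $B\ne\bar B$ is compatible with congruence of the images (compare Remark \ref{rem3}). I expect the second step — extracting $\Phi\equiv 0$ from the second-order integrability conditions — to be the main obstacle, as it demands careful bookkeeping of the covariant derivatives of the off-diagonal components together with the Ricci identities; once $\Phi=0$ is available, identifying the canonical example is routine (or citable).
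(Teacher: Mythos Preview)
Your overall architecture---deduce $\mu=0$ and hence $\lambda_1=-\lambda_2=\sqrt{(n-1)/(2n)}$ constant, then prove $\Phi=0$, then identify the M\"obius isoparametric hypersurface and conclude $f(M)\cong\bar f(M)$---matches the paper exactly, and your endgame (citing the classification in \cite{hu}, building $\psi=\bar f^{-1}\circ\Phi\circ f$) is the same.  Your observation that $B^2=\bar B^2$ and hence $A=\bar A$ via \eqref{equa5} is correct and elegant; the paper never records it.

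The divergence is in the middle step.  The paper does \emph{not} use the comparison $A=\bar A$ (or $\bar f$ at all) to establish $\Phi=0$.  Instead it works entirely with $f$: after normalising $B_{12,3}\neq 0$, $B_{12,\alpha}=0$ for $\alpha>3$, it writes out all connection forms explicitly, differentiates to compute the curvature components $R_{ijkl}$, and then applies the Ricci identity to the \emph{Blaschke} tensor $A$ (specifically to $A_{1\alpha,1\alpha}-A_{1\alpha,\alpha 1}$ and $A_{2\alpha,2\alpha}-A_{2\alpha,\alpha 2}$ with $\alpha>3$) to force $A_{13}=A_{23}=0$.  Equation \eqref{equa1} then yields $A_{11}+A_{22}=2A_{33}$, and combining this with the explicit curvature expressions produces the scalar identity $2\lambda_1^2=6B_{12,3}^2/\lambda_1^2-(C_1^2+C_2^2)/\lambda_1^2$; taking $E_3$ of this kills $E_3(B_{12,3})$, whence $\omega_{3\alpha}(E_\alpha)=0$, and then the curvature equations for $R_{3\alpha 1\alpha}$, $R_{3\alpha 2\alpha}$ read $B_{12,3}C_1=B_{12,3}C_2=0$, giving $C_1=C_2=0$.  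Only afterwards does the paper touch $\bar f$: plugging $C_1=C_2=0$ into \eqref{cf1} immediately gives $\bar C_1=\bar C_2=0$.

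Your proposed route is genuinely different, but be aware of a concrete obstacle: feeding $A=\bar A$ into \eqref{equa1} only tells you that $(C_1,C_2)$ and $(\bar C_1,\bar C_2)$ differ by the rotation $R(2\theta)$ determined by $\bar B_{12}=\lambda_1\sin 2\theta$, $\bar B_{11}=\lambda_1\cos 2\theta$---the \emph{same} relation you already get from the last pair of equations in \eqref{cf1}.  So $A=\bar A$ with \eqref{equa1} adds no new constraint beyond $|C|^2=|\bar C|^2$, and you still need a second-order mechanism.  It may be possible to push through with Ricci identities for $B$ and $\bar B$ as you suggest, but that has to be checked; the paper's intrinsic argument via Ricci identities for $A$ is what is known to close.
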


\begin{proof}
Since $B_{\alpha\beta}=\bar{B}_{\alpha\beta}=0$, We can assume that
\begin{equation}\label{b1235}
B_{12,3}\neq 0,~~B_{12,\alpha}=0, \alpha\neq 3.
\end{equation}
From (\ref{05}) we have
\begin{equation}\label{bw1}
\begin{split}
&\omega_{12}=\frac{-C_2}{2\lambda_1}\omega_1-\frac{C_1}{2\lambda_1}\omega_2+\frac{B_{12,3}}{2\lambda_1}\omega_3;\\
&\omega_{13}=\frac{B_{12,3}}{\lambda_1}\omega_2-\frac{C_1}{\lambda_1}\omega_3,~\omega_{23}=\frac{B_{12,3}}{\lambda_2}\omega_2-\frac{C_2}{\lambda_2}\omega_3;\\
&\omega_{1\alpha}=\frac{-C_1}{\lambda_1}\omega_{\alpha},~~\omega_{2\alpha}=\frac{-C_2}{\lambda_2}\omega_{\alpha}, \alpha>3.
\end{split}
\end{equation}
Since $C_{\alpha}=0$, using $dC_i+\sum_mC_m\omega_{mi}=\sum_mC_{i,m}\omega_m$ and (\ref{bw1}), we get
\begin{equation}\label{bc1}
\begin{split}
&C_{\alpha,\alpha}=\frac{C_2^2-C_1^2}{\lambda_1},~~C_{\alpha,k}=0,k\neq\alpha,\alpha>3;\\
&C_{3,3}=\frac{C_2^2-C_1^2}{\lambda_1},C_{3,1}=\frac{B_{12,3}C_2}{\lambda_2},C_{3,2}=\frac{B_{12,3}C_1}{\lambda_1},C_{3,\alpha}=0,\alpha>3.
\end{split}
\end{equation}
Differentiating the equations (\ref{bw1}), we get
\begin{equation}\label{wr1}
\begin{split}
\frac{-1}{2}\sum_{kl}R_{12kl}\omega_k\wedge\omega_l=\frac{-1}{2\lambda_1}\sum_m[C_{2,m}\omega_m\wedge\omega_1+C_{1,m}\omega_m\wedge\omega_2]-
3\frac{B_{12,3}C_1}{2\lambda_1^2}\omega_1\wedge\omega_3\\
+[\frac{C_1^2+C_2^2}{2\lambda_1^2}+2\frac{B_{12,3}^2}{\lambda_1^2}]\omega_1\wedge\omega_2+3\frac{B_{12,3}C_2}{2\lambda_1^2}\omega_2\wedge\omega_3+
\frac{dB_{12,3}}{2\lambda_1}\wedge\omega_3.
\end{split}
\end{equation}
\begin{equation}\label{bw2}
\begin{split}
\frac{-1}{2}\sum_{kl}R_{13kl}\omega_k\wedge\omega_l&=\frac{1}{\lambda_1}dB_{12,3}\wedge\omega_2-\frac{1}{\lambda_1}\sum_m C_{1,m}\omega_m\wedge\omega_3-
2\frac{B_{12,3}C_1}{\lambda_1^2}\omega_1\wedge\omega_2\\
&+[\frac{C_1^2+C_2^2}{2\lambda_1^2}-\frac{B_{12,3}^2}{\lambda_1^2}]\omega_1\wedge\omega_3.
\end{split}
\end{equation}
\begin{equation}\label{bw3}
\begin{split}
\frac{-1}{2}\sum_{kl}R_{23kl}\omega_k\wedge\omega_l&=\frac{1}{\lambda_1}dB_{12,3}\wedge\omega_1-\frac{1}{\lambda_2}\sum_m C_{2,m}\omega_m\wedge\omega_3+
2\frac{B_{12,3}C_2}{\lambda_1^2}\omega_1\wedge\omega_2\\
&+[\frac{C_1^2+C_2^2}{2\lambda_1^2}-\frac{B_{12,3}^2}{\lambda_1^2}]\omega_2\wedge\omega_3.
\end{split}
\end{equation}
\begin{equation}\label{bw4}
\begin{split}
\frac{-1}{2}\sum_{kl}R_{1\alpha kl}\omega_k\wedge\omega_l&=\frac{-1}{\lambda_1}\sum_m C_{1,m}\omega_m\wedge\omega_{\alpha}
+\frac{C_1^2+C_2^2}{\lambda_1^2}\omega_1\wedge\omega_{\alpha}\\
&-\frac{B_{12,3}C_2}{\lambda_1^2}\omega_3\wedge\omega_{\alpha}
-\frac{B_{12,3}}{\lambda_1}\omega_2\wedge\omega_{3\alpha}.
\end{split}
\end{equation}
\begin{equation}\label{bw5}
\begin{split}
\frac{-1}{2}\sum_{kl}R_{2\alpha kl}\omega_k\wedge\omega_l&=\frac{-1}{\lambda_1}\sum_m C_{2,m}\omega_m\wedge\omega_{\alpha}
+\frac{C_1^2+C_2^2}{\lambda_1^2}\omega_2\wedge\omega_{\alpha}\\
&-\frac{B_{12,3}C_1}{\lambda_1^2}\omega_3\wedge\omega_{\alpha}
+\frac{B_{12,3}}{\lambda_1}\omega_1\wedge\omega_{3\alpha}.
\end{split}
\end{equation}
Comparing the coefficients of $\omega_3\wedge\omega_{\alpha}$ on both sides of (\ref{bw2}), and using (\ref{equa4}) we obtain
\begin{equation}\label{ba1}
A_{1\alpha}=0,~~A_{3\alpha}=0, ~E_{\alpha}(B_{12,3})=0, ~\alpha>3.
\end{equation}
Similarly from (\ref{bw1}),(\ref{bw3}), (\ref{bw4}) and (\ref{bw5}), we have
\begin{equation}\label{ba2}
\begin{split}
&A_{2\alpha}=0,~~ A_{\alpha\beta}=0, \alpha,\beta>3, \alpha\neq\beta;\\
&\omega_{3\alpha}(E_1)=0,~\omega_{3\alpha}(E_2)=0,~\omega_{3\alpha}(E_3)=0,~\omega_{3\alpha}(E_{\beta})=0,~\alpha,\beta>3, \alpha\neq\beta;\\
&R_{1\alpha1\alpha}=-\frac{C_1^2+C_2^2}{\lambda_1^2}+\frac{C_{1,1}}{\lambda_1},R_{2\alpha2\alpha}=-\frac{C_1^2+C_2^2}{\lambda_1^2}+\frac{C_{2,2}}{\lambda_2},\alpha>3;\\
&R_{1313}=\frac{B_{12,3}^2}{\lambda_1^2}-\frac{C_1^2+C_2^2}{\lambda_1^2}+\frac{C_{1,1}}{\lambda_1},R_{2323}=\frac{B_{12,3}^2}{\lambda_1^2}-\frac{C_1^2+C_2^2}{\lambda_1^2}+\frac{C_{2,2}}{\lambda_2};\\
&R_{1212}=\frac{C_{1,1}-C_{2,2}}{2\lambda_1}-2\frac{B_{12,3}^2}{\lambda_1^2}-\frac{C_1^2+C_2^2}{2\lambda_1^2}.
\end{split}
\end{equation}
\begin{equation}\label{ba3}
\begin{split}
&E_2(B_{12,3})=\lambda_1A_{13}-2\frac{B_{12,3}C_2}{\lambda_1},~E_1(B_{12,3})=\lambda_2A_{23}+2\frac{B_{12,3}C12}{\lambda_1};\\
&A_{12}=\frac{C_{1,2}}{\lambda_1}+\frac{B_{12,3}}{\lambda_1}\omega_{3\alpha}(E_{\alpha}),~A_{12}=\frac{-C_{2,1}}{\lambda_1}-\frac{B_{12,3}}{\lambda_1}\omega_{3\alpha}(E_{\alpha});\\
&A_{12}=\frac{-C_{1,2}}{\lambda_1}+\frac{-1}{\lambda_1}E_3(B_{12,3}),~A_{12}=\frac{C_{2,1}}{\lambda_1}+\frac{-1}{\lambda_1}E_3(B_{12,3}).
\end{split}
\end{equation}
From (\ref{ba3}), (\ref{bw3}) and (\ref{bw4}), we have
\begin{equation}\label{baw}
E_3(B_{12,3})=B_{12,3}\omega_{3\alpha}(E_{\alpha}).
\end{equation}
Define $\phi:=\omega_{3\alpha}(E_{\alpha})=\frac{E_3(B_{12,3})}{B_{12,3}}$. From (\ref{bw2}), we have
\begin{equation}\label{bwa}
\omega_{3\alpha}=\phi\omega_{\alpha}.
\end{equation}
Differentiating the equations (\ref{bwa}), we get
\begin{equation}\label{bw7}
\begin{split}
\frac{-1}{2}\sum_{kl}R_{3\alpha kl}\omega_k\wedge\omega_l&=d\phi\wedge\omega_{\alpha}-\frac{C_1}{\lambda_1}\phi\omega_1\wedge\omega_{\alpha}
-\frac{C_2}{\lambda_2}\phi\omega_2\wedge\omega_{\alpha}+\phi^2\omega_3\wedge\omega_{\alpha}\\
&-\frac{B_{12,3}C_1}{\lambda_1^2}\omega_2\wedge\omega_{\alpha}+\frac{C_1^2+C_2^2}{\lambda_1^2}\omega_3\wedge\omega_{\alpha}
-\frac{B_{12,3}C_2}{\lambda_1^2}\omega_1\wedge\omega_{\alpha}.
\end{split}
\end{equation}
Comparing the coefficients of $\omega_1\wedge\omega_{\alpha}$ and  $\omega_2\wedge\omega_{\alpha}$ on both sides of (\ref{bw7}), and using (\ref{equa4}) we obtain
\begin{equation}\label{baw1}
-A_{13}=E_1(\phi)-\frac{C_1}{\lambda_1}\phi-\frac{B_{12,3}C_2}{\lambda_1^2},-A_{23}=E_2(\phi)-\frac{C_2}{\lambda_2}\phi-\frac{B_{12,3}C_2}{\lambda_1^2}.
\end{equation}
Using $dA_{ij}+\sum_mA_{mj}\omega_{mi}+\sum_mA_{im}\omega_{mj}=\sum_mA_{ij,m}\omega_m$ and (\ref{bw2}), we obtain
\begin{equation}\label{bw6}
\begin{split}
A_{1\alpha,\alpha}=(A_{\alpha\alpha}-A_{11})\frac{C_1}{\lambda_1}+A_{12}\frac{C_2}{\lambda_1}+A_{13}\omega_{3\alpha}(E_{\alpha}),~~A_{1\alpha,k}=0,k\neq\alpha;\\
A_{2\alpha,\alpha}=(A_{22}-A_{\alpha\alpha})\frac{C_2}{\lambda_1}-A_{12}\frac{C_1}{\lambda_1}+A_{23}\omega_{3\alpha}(E_{\alpha}),~~A_{2\alpha,k}=0,k\neq\alpha;\\
A_{3\alpha,\alpha}=(A_{33}-A_{\alpha\alpha})\omega_{3\alpha}(E_{\alpha})-A_{13}\frac{C_1}{\lambda_1}+A_{23}\frac{C_2}{\lambda_1},~~A_{3\alpha,k}=0,k\neq\alpha.
\end{split}
\end{equation}
On the other hands, from (\ref{bw3}) we have
$$A_{33}-A_{\alpha\alpha}=\frac{B_{12,3}^2}{\lambda_1^2}, \alpha>3.$$
Noting that $E_{\alpha}(B_{12,3})=0$ and $A_{33,\alpha}=A_{3\alpha,3}=0$, we get
\begin{equation}\label{bwa1}
E_{\alpha}(A_{\alpha\alpha})=E_{\alpha}(A_{\beta\beta})=0,\alpha\neq\beta,\alpha,\beta>3.
\end{equation}
Combining (\ref{bw1}), (\ref{baw1}), (\ref{bw7}),(\ref{bw6}) and (\ref{bwa1}), we get
\begin{equation}\label{bwb1}
\begin{split}
A_{1\alpha,\alpha1}&=(A_{\alpha\alpha,1}-A_{11,1})\frac{C_1}{\lambda_1}+(A_{11}-A_{\alpha\alpha})[\frac{C_2^2}{\lambda_1^2}-\frac{C_{1,1}}{\lambda_1}]
+A_{12}\frac{C_1C_2}{\lambda_1^2}\\
&+A_{12,1}\frac{C_2}{\lambda_1}+A_{12}C_{2,1}+A_{13,1}\phi-A_{13}^2+\frac{A_{13}\phi C_1}{\lambda_1}-A_{12}\frac{B_{12,3}\phi}{\lambda_1};\\
A_{1\alpha,1\alpha}&=(2A_{\alpha\alpha,1}-A_{11,1})\frac{C_1}{\lambda_1}+A_{12,2}\frac{C_2}{\lambda_1}+A_{13,1}\phi.
\end{split}
\end{equation}
Combining (\ref{ba2}), (\ref{bwb1}) and Ricci identity $A_{1\alpha,1\alpha}-A_{1\alpha,\alpha1}=\sum_mA_{m\alpha}R_{m11\alpha}+\sum_mA_{1m}R_{m\alpha1\alpha}$, we obtain
\begin{equation}\label{aa1}
A_{13}=0.
\end{equation}
Similarly using Ricci identity $A_{2\alpha,2\alpha}-A_{2\alpha,\alpha2}=\sum_mA_{m\alpha}R_{m22\alpha}+\sum_mA_{2m}R_{m\alpha2\alpha}$, we have
\begin{equation}\label{aa2}
A_{23}=0.
\end{equation}
Using (\ref{aa1}), (\ref{aa2}) and $dA_{ij}+\sum_mA_{mj}\omega_{mi}+\sum_mA_{im}\omega_{mj}=\sum_mA_{ij,m}\omega_m$ and (\ref{bw2}), we obtain
\begin{equation}\label{aa3}
A_{13,2}=(A_{11}-A_{33})\frac{B_{12,3}}{\lambda_1},~A_{23,1}=(A_{22}-A_{33})\frac{B_{12,3}}{\lambda_2}.
\end{equation}
From (\ref{equa1}), we know that $A_{13,2}=A_{23,1}$, thus equations (\ref{aa3}) mean that
\begin{equation}\label{aa4}
A_{11}+A_{22}=2A_{33}.
\end{equation}
Combining (\ref{ba2}) and (\ref{aa4}), we obtain
\begin{equation}\label{aa5}
2\lambda_1^2=6\frac{B_{12,3}^2}{\lambda_1^2}-\frac{C_1^2+C_2^2}{\lambda_1^2}.
\end{equation}
Taking derivatives for (\ref{aa5}) along $E_3$ and using (\ref{equa2}) and (\ref{bc1}), we have
$$E_3({B_{12,3}})=0.$$
This means that $\phi=0$. From (\ref{baw1}), (\ref{aa1}) and (\ref{aa2}),  we get
\begin{equation}\label{bder}
B_{12,3}\frac{C_1}{\lambda_1^2}=0,~~B_{12,3}\frac{C_2}{\lambda_1^2}=0.
\end{equation}
One deduces the M\"{o}bius form $\Phi=0.$

Since $\mu=0$, we get from \eqref{equa6} that
$\lambda_1=\sqrt{\frac{n-1}{2n}},
\lambda_2=-\sqrt{\frac{n-1}{2n}}.$
Thus $f$ is a M\"{o}bius isoparametric hypersurface
with M\"{o}bius principal curvatures
\[
\sqrt{\frac{n-1}{2n}},-\sqrt{\frac{n-1}{2n}},0,\cdots,0.
\]
It is then easy to show (or by the classification result in
\cite{hu} of M\"obius isoparametric hypersurfaces with
three distinct principal curvatures) that $f$ is
 M\"{o}bius equivalent to the minimal hypersurface defined by
\[
x=(x_1,x_2):M^n=N^3\times H^{n-3}(-\frac{n-1}{6n})\rightarrow S^{n+1},
\]
where
\[
x_1=\frac{y_1}{y_0},x_2=\frac{y_2}{y_0},y_0\in R^+,
y_1\in R^5, y_2\in R^{n-3}.
\]
Here $y_1:N^3\rightarrow
S^4(\sqrt{\frac{6n}{n-1}})\hookrightarrow R^5$
is Cartan's minimal isoparametric hypersurface
in $S^4(\sqrt{\frac{6n}{n-1}})$ with
three principal curvatures, and
$(y_0,y_2):H^{n-3}(-\frac{n-1}{6n})\hookrightarrow R^{n-2}_1$
is the standard embedding of the hyperbolic space of
sectional curvature $-\frac{n-1}{6n}$ into
the $(n-2)$-dimensional Lorentz space with
$-y_0^2+y_2^2=\frac{6n}{n-1}$.

On the other hand, Since $\bar{\mu}=\mu=0$, then $B_{\alpha\alpha,1}=\bar{B}_{\alpha\alpha,1}=E_1(\bar{\mu})=0, B_{\alpha\alpha,2}=\bar{B}_{\alpha\alpha,2}=E_2(\bar{\mu})=0$. Using (\ref{equa3}), we have
\[ \bar{B}_{1\alpha,\alpha}=-\bar{C}_1,\bar{B}_{2\alpha,\alpha}=-\bar{C}_2,B_{1\alpha,\alpha}=C_1=0,B_{2\alpha,\alpha}=C_2=0.\]
For the last two of equations (\ref{cf1}), we get that
$\bar{\Phi}=0.$
Since $B,\bar{B}$ share equal eigenvalues, then $\bar{f}$ determines the same M\"obius isoparametric hypersurface as $f$.
So $f(M)$ is M\"{o}bius equivalent to $\bar{f}(M)$.

In fact, at every point of $M^n$ there exist a suitable $P=\begin{pmatrix}
\cos\theta & \sin\theta\\
-\sin\theta & \cos\theta
\end{pmatrix}\in SO(2)$
such that
\begin{equation*}\label{2b2}
\begin{pmatrix}
\bar{B}_{11}& \bar{B}_{12}\\
\bar{B}_{21}& \bar{B}_{22}
\end{pmatrix}
= P^{-1}\begin{pmatrix}
\sqrt{\frac{n-1}{2n}} & 0\\
0 & -\sqrt{\frac{n-1}{2n}}
\end{pmatrix}P.
\end{equation*}
One can show by computation that $\theta$ is a constant.
Then one can construct a local diffeomorphism
$\psi:M^n\rightarrow M^n$ such that $\bar{f}\circ\psi$
not only shares the same metric as $f$,
but also shares the same M\"obius principal curvatures
and the same principal directions. Thus there exists
M\"obius transformation $\Psi$ such that
\[\bar{f}\circ\psi=\Psi\circ f.\]
This is exactly the case as in Remark~\ref{rem3}. Thus we do not get new M\"obius deformable examples.
\end{proof}

Thus we have verified Proposition~\ref{prop-n-2} in all cases
and completed the proof to the Main Theorem~\ref{main2}.\\

{\bf Acknowledgements:} The authors thank the referees for helpful suggestions.

\end{document}